\newcommand{\C}{\mathbb{C}}
\newcommand{\R}{\mathbb{R}}
\newcommand{\N}{\mathbb{N}}
\newcommand{\ddbar}{i\partial\overline{\partial}}
\renewcommand{\epsilon}{\varepsilon}
\renewcommand{\leq}{\leqslant}
\renewcommand{\geq}{\geqslant}
\newtheorem{theorem}{Theorem}[section]
\newtheorem*{mtheorem}{Main Theorem}
\newtheorem{lemma}[theorem]{Lemma}
\newtheorem{corollary}[theorem]{Corollary}
\newtheorem{proposition}[theorem]{Proposition}
\newtheorem{reduction}[theorem]{Reduction}
\theoremstyle{definition}
\newtheorem{definition}[theorem]{Definition}
\newtheorem{example}[theorem]{Example}
\numberwithin{equation}{section}
\begin{document}

\title{Asymptotics of K\"ahler-Einstein metrics\\on complex hyperbolic cusps}

\author{Xin Fu}
\address{Department of Mathematics, University of California, Irvine, CA 92697, USA}
\email{fux6@uci.edu}

\author{Hans-Joachim Hein}
\address{Mathematisches Institut, WWU M\"unster, 48149 M\"unster, Germany\newline\hspace*{9pt}
Department of Mathematics, Fordham University, Bronx, NY 10458, USA}
\email{hhein@uni-muenster.de}

\author{Xumin Jiang}
\address{Department of Mathematics, Fordham University, New York, NY 10023, USA}
\email{xjiang77@fordham.edu}

\date{\today}

\begin{abstract}
Let $L$ be a negative holomorphic line bundle over an $(n-1)$-dimensional complex torus $D$. Let $h$ be a Hermitian metric on $L$ such that the curvature form of the dual Hermitian metric defines a flat K\"ahler metric on $D$. Then $h$ is unique up to scaling, and, for some closed tubular neighborhood $V$ of the zero section $D \subset L$, the form $\omega_h = -(n+1)i\partial\overline\partial \log(-{\log h})$ defines a complete K\"ahler-Einstein metric on $V \setminus D$ with ${\rm Ric}(\omega_h) = -\omega_h$. In fact, $\omega_h$ is complex hyperbolic, i.e., the holomorphic sectional curvature of $\omega_h$ is constant, and $\omega_h$ has the usual doubly-warped cusp structure familiar from complex hyperbolic geometry. In this paper, we prove that if $U$ is another closed tubular neighborhood of the zero section and if $\omega$ is a complete K\"ahler-Einstein metric with ${\rm Ric}(\omega) = -\omega$ on $U \setminus D$, then there exist a Hermitian metric $h$ as above and a $\delta \in \mathbb{R}^+$ such that $\omega - \omega_{h} = {O}(e^{-\delta\sqrt{-{\log h}}})$ to all orders with respect to $\omega_h$ as $h \to 0$. This rate is doubly exponential in the distance from a fixed point, and is sharp.
\end{abstract}

\maketitle

\markboth{Asymptotics of K\"ahler-Einstein metrics on complex hyperbolic cusps}{Xin Fu, Hans-Joachim Hein, and Xumin Jiang}

\section{Introduction}

\subsection*{Background and main result}

Complex hyperbolic $n$-space $\C H^n$ is the model space uniformizing K\"ahler manifolds of complex dimension $n\geq 2$ with constant negative holomorphic sectional curvature. $\C H^n$ is most conveniently realized as the unit ball in $\mathbb{C}^n$ together with the Bergman metric, and is one of the first examples of a K\"ahler-Einstein manifold \cite{Kahler,Tian}. Finite-volume quotients of $\mathbb{C}H^n$ are either compact or have cuspidal ends. See \cite{Biq,Mok} for good introductions to complex hyperbolic cusps. In this paper, we are interested in a stability property of these cusps as K\"ahler-Einstein manifolds.

We begin by explaining an alternative description of complex hyperbolic cusps in terms of the Calabi ansatz \cite{dBSp,FangFu}. Let $D$ be a (necessarily projective) complex torus of complex dimension $n-1$ together with a negative holomorphic line bundle $L \to D$. Let $h$ be any Hermitian metric on $L$ such that the curvature form of the Hermitian metric dual to $h$ is a flat K\"ahler metric on $D$. Then $h$ is unique up to scaling, i.e., up to replacing $h$ by $\lambda h$ for some $\lambda \in \R^+$. This scaling freedom can also be understood as a consequence of the natural action of $\C^*$ on the total space of $L$. Viewing $h$ as a smooth function on the total space which is homogeneous of degree $2$ along every fiber, we define $\sigma = -{\log h}$ and $\omega_h$ $=$ $-(n+1)i\partial\overline{\partial} \log \sigma$. Then $\omega_h$ is a K\"ahler form on $\{0 < h < 1\}$ with ${\rm Ric}(\omega_h) = -\omega_h$. If $V$ is any closed tubular neighborhood of the zero section $D \subset L$ of the form $V = \{h \leq \delta\}$ for some $\delta \in (0,1)$, then the Riemannian manifold-with-boundary $(V\setminus D, \omega_h)$ is complete of finite volume. In fact, it is a complex hyperbolic cusp, and any complex hyperbolic cusp arises in this way up to a finite cover. Define $x = \frac{1}{\sigma}$ and $\rho = -\sqrt{(n+1)/2}\log x$. Then $\rho$ is a Busemann function (its gradient has constant length $1$). Also, on the level set $Y_\epsilon = \{x = \epsilon^2\}$, the metric tensor is proportional to $\epsilon^4$ in the $S^1$-directions and to $\epsilon^2$ in the $D$-directions. Here we use the Chern connection of $h$ to define a complement to the $S^1$-directions inside the tangent bundle $TY_\epsilon$, and this decomposition is orthogonal with respect to $\omega_h$.

Write $Y = \partial V$. Datar-Fu-Song \cite{DFS} have proved that for any smooth function $\beta: Y \to \R$ there exists a unique complete K\"ahler-Einstein metric $\omega = \omega_h + i\partial\overline{\partial} u$ on $V$ such that $\omega^n = e^u \omega_h^n$ on $V$, hence in particular ${\rm Ric}(\omega) = -\omega$ on $V$, and $u = \beta$ on $Y$. Moreover, $u$ and all of its covariant derivatives with respect to $\omega_h$ decay like $O((-{\log x})^{-1})$, i.e., linearly in terms of the distance function $\rho$. This improves a more global construction of \cite{CY2,Koba,TY} on projective varieties with cusp singularities.

In this paper, we determine the precise asymptotics of the Datar-Fu-Song solutions \cite{DFS}.

\begin{mtheorem}
Fix a Hermitian metric $h$ on $L$ as above. Let $U$ be a closed tubular neighborhood of the zero section $D \subset L$. Let $\omega$ be a complete K\"ahler-Einstein metric on $U \setminus D$ such that ${\rm Ric}(\omega) = -\omega$. Let $u = \log(\omega^n/\omega_h^n)$, so that $\omega = \omega_h + i\partial\overline{\partial} u$ and, by \cite{DFS}, $u = O((-{\log x})^{-1})$ as $x \to 0^+$ to all orders with respect to $\omega_h$. Then there exists a constant $c \in \R$ such that for all $k \in \N_0$,
\begin{align}\label{eq-mainthm}
|\nabla^k_{\omega_h} (u +(n+1) \log (1+c x))|_{\omega_h} = O_{k}\left(x^{-\frac{n}{2}+\frac{1}{4}-\frac{k}{2}} e^{-\frac{2\sqrt{\lambda_1}}{\sqrt{x}}}\right)\;\,\text{as}\;\,x \to 0^+.
\end{align}
Here $\lambda_1$ denotes the first eigenvalue of $D$ with respect to the flat K\"ahler metric given by the curvature form of $h^{-1}$, and neither the power $-\frac{n}{2}+\frac{1}{4}-\frac{k}{2}$ nor the exponent $2\sqrt{\lambda_1}$ can be improved.
\end{mtheorem}

In particular, $\omega$ is asymptotic to the shifted Calabi model metric $\omega_{\lambda h}$ ($\lambda = e^{-c}$) at a rate which is exponentially faster than the rate at which $\omega_{\lambda h}$ is asymptotic to $\omega_h$, i.e., exponential vs.~polynomial in terms of $x$, or doubly vs.~singly exponential in terms of the distance function $\rho$.

\subsection*{Overview of the proof}

The function $x$ is the unique decaying radial solution to the homogeneous linearized K\"ahler-Einstein equation. We expand $u$ according to powers of $x$, which results in a formal power series solution differing from $u$ by $O(x^\infty)$. This is similar to asymptotic expansion arguments as in \cite{JiangShi,LM,RZ,Schu,Wu}, with our proof being closest in spirit to \cite{JiangShi}. The coefficients of our expansion are actually constants, the first coefficient determines all the others, and the formal series converges to $-(n+1)\log(1+cx)$ for some $c \in \R$. We think of this expression as the ``tangent cone'' of $u$.

Asymptotic expansion methods \cite{JiangShi,LM,RZ,Schu,Wu} stop at the statement that the difference between $u$ and some formal (in this case, convergent) power series in $x$ is $O(x^\infty)$. A new point here is that we go further and prove an explicit estimate, \eqref{eq-mainthm}, for this difference, which is actually completely sharp as evidenced by the next generation of solutions to the homogeneous linearized PDE. This estimate is exponential in $x$. In situations with conical or cylindrical geometry, i.e., with an asymptotic scaling or translation symmetry, this kind of statement is reminiscent of the unique tangent cone theorem of \cite{AA}; see \cite[p.270]{LS} for a simplification of \cite{AA} using rescaling arguments, and see \cite{CT} for an application of \cite{LS} in the context of Einstein metrics. There are no useful scaling symmetries in our setting because our geometry is cuspidal (with two different collapsing scales) rather than conical or cylindrical.

We therefore develop a more effective version of the unique tangent cone argument, which is possible in our situation because we already have an explicit decay rate, $O(x^\infty)$, and our goal is to boost this to the decay rate of the next generation of solutions, $O(e^{-\delta/\sqrt{x}})$. The two main ingredients are (A) an explicit integral operator in terms of Bessel functions, which serves as a right inverse to the linearized operator, and which we need to bound with respect to weighted norms that are piecewise polynomial and piecewise exponential in $x$ (this is the most novel ingredient), and (B) uniform Schauder estimates on metric balls of radius $\sim 1$. These Schauder estimates are nontrivial because the geometry scale of $\omega_h$, where Schauder estimates follow from the standard theory on balls in $\C^n$, is only $\sim x$. Since $D$ is a torus, we can work around this problem in a fairly standard manner by using local universal covers, or quasi-coordinates \cite{CY2}, although in principle there is now a way of proving uniform Schauder estimates at large scales even if the local fundamental groups are trivial at all scales \cite{HeinTosatti}.

\subsection*{Organization of the paper}

In Section \ref{sec:bg} we carefully review the geometry of the model metrics $\omega_h$, compute the linearized operator in convenient coordinates, and state the necessary results from \cite{DFS}. In Section \ref{sec:expansion-powers}, after some preliminary estimates on the solution $u$, we expand $u$ according to powers of $x$, which determines the tangent cone. In Section \ref{sec:exp-decay}, we construct a suitable bounded right inverse to the linearized operator as an explicit integral operator in terms of Bessel functions and then carry out the ``unique tangent cone'' argument, which leads to the desired sharp estimate of the remainder.

\subsection*{Acknowledgments} HJH was partially supported by the DFG  under Germany's Excellence Strategy EXC 2044-390685587 ``Mathematics M\"unster:~Dynamics-Geometry-Structure" as well as by the CRC 1442 ``Geometry:~Deformations and Rigidity'' of the DFG.

We would also like to point out the related, nearly simultaneous but independent papers \cite{BiqGue, SZ}. In \cite{BiqGue}, the analysis of complex hyperbolic cusps is studied from a different point of view with the aim of constructing complex hyperbolic Dehn filling type metrics. This is related to our Example \ref{ex:conical}. In \cite[Section 6.5]{SZ}, unique tangent cone arguments in the spirit of Simon's method \cite{LS} are carried out in a different geometry with two-scale collapsing, resulting in estimates similar to our Main Theorem.

\section{Preliminaries}\label{sec:bg}

All the work in Sections \ref{ss:modelmetric}--\ref{ss:modelop} (computational properties of the model metric and of the linearized operator) goes through in a more general setting where $D$ is a compact $(n-1)$-dimensional Ricci-flat K\"ahler manifold rather than a flat complex torus. In this setting, the model metric $\omega_h$ is still K\"ahler-Einstein, but unless $D$ is a finite quotient of a complex torus it is not complex hyperbolic, and indeed its holomorphic sectional curvature is unbounded above and below. Unboundedness of the curvature would start causing problems in Section \ref{ss:dfs} (results of Datar-Fu-Song \cite{DFS}) but also at various points later in the paper. Except for generalizing the Monge-Amp\`ere $C^2$ estimate of \cite{DFS,Koba}, these problems should be solvable along the lines of \cite{HeinTosatti}, but the $C^2$ estimate remains a fundamental obstacle. Thus, overall, in this paper we chose to restrict ourselves to the case where $D$ is a torus.

\subsection{The model metric}\label{ss:modelmetric}

Let $\pi: L \to D$ be a negative holomorphic line bundle over a complex torus of complex dimension $n - 1$ for $n \geq 2$. Let $h$ be a Hermitian metric on $L$ such that the curvature form of the dual Hermitian metric is a flat K\"ahler metric on $D$. Then $h$ is unique up to scaling. We view $h$ as a smooth function on the total space of $L$ which is fiberwise homogeneous of degree $2$. In other words, we identify $h$ with the function $(\pi^*h)(s,s)$, where $s$ denotes the tautological section of the line bundle $\pi^*L \to L$. It is also helpful to introduce the three functions
\begin{align}\label{eq:fctns}
\sigma := -{\log h},\;\,x := \frac{1}{\sigma},\;\,\rho := -\sqrt{\frac{n+1}{2}}\log x.
\end{align}
With this understood, we define the well-known K\"ahler-Einstein model metric
\begin{align}\label{eq-model-KE}
\omega_h := -(n+1)i\partial\overline{\partial} \log \sigma
\end{align}
on the region $\{0 < h < 1\}$. See \cite{dBSp, FangFu} for two recent treatments of this metric in the literature. Fixing an arbitrary point $p \in D \subset L$, we choose local holomorphic coordinates $(z_1, \ldots, z_n) = (z', z_n)$ around $p$ on $L$ such that $\pi = z'$ in this chart and $z_i(p) = 0$ for all $i=1,\ldots,n$. In particular, $D = \{z_n  = 0\}$. We can also assume without loss of generality that the range of $z'$ is the unit ball $B \subset \C^{n-1}$ centered at the origin. Then $h = e^{-\varphi(z')}|z_n|^2$, and a simple computation yields the following lemma.

\begin{lemma}\label{lem:totallybasic}
In any local holomorphic chart $(z', z_n)$ as above, 
\begin{align}\label{eq:sympform}
\omega_h = (n+1)[-ix\partial\overline{\partial}\varphi + ix^2(\partial \varphi -\partial \log z_n) \wedge (\overline{\partial}\varphi - \overline{\partial} \log \bar{z}_n)].
\end{align}
Thus, under the convention that
\begin{align}
\omega_h = i \sum g_{j\bar{k}} dz^j \wedge d\bar{z}^k,
\end{align}
the metric coefficients are given by
\begin{align}\label{eq-gij}
g_{j\bar{k}} = -i\omega_h(\partial_j, \partial_{\bar{k}}) = (n+1)\left[-x\varphi_{j\bar{k}} + x^2\left(\varphi_j-\frac{\delta_{jn}}{z_n}\right)\left(\varphi_{\bar{k}} -\frac{\delta_{kn}}{\bar{z}_n}\right)\right].
\end{align}
\end{lemma}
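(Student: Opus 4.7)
This lemma is a direct coordinate computation from the definition $\omega_h = -(n+1)i\partial\overline\partial \log \sigma$, and no deep ideas are required; the task is to organize the chain-rule calculation cleanly and identify what survives in the chosen trivialization. My plan is to work symbolically in $\sigma$ first, then specialize to the coordinate expression $\sigma = \varphi(z') - \log z_n - \log \bar z_n$ at the very end.

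First I would record the two pieces of chain-rule data needed. Since $x = 1/\sigma$, one has
\begin{align*}
\partial \log \sigma = \frac{\partial \sigma}{\sigma} = x\,\partial \sigma, \qquad \overline\partial x = -\frac{\overline\partial \sigma}{\sigma^2} = -x^2\,\overline\partial \sigma.
\end{align*}
Applying $\overline\partial$ to $\partial \log \sigma = x\,\partial \sigma$ and using these identities together with $\partial\overline\partial = -\overline\partial \partial$, I get
\begin{align*}
i\partial\overline\partial \log \sigma = ix\,\partial\overline\partial \sigma - ix^2\,\partial \sigma \wedge \overline\partial \sigma.
\end{align*}
Multiplying by $-(n+1)$ gives an expression for $\omega_h$ entirely in terms of $\sigma$ and its $\partial, \overline\partial$ derivatives.

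Next I would pass to the chart $(z',z_n)$ in which $h = e^{-\varphi(z')}|z_n|^2$, so that $\sigma = \varphi - \log z_n - \log \bar z_n$. Since $\log z_n$ is holomorphic and $\log \bar z_n$ is antiholomorphic on $\{z_n \neq 0\}$, I have $\partial\overline\partial \sigma = \partial\overline\partial \varphi$, and
\begin{align*}
\partial \sigma = \partial \varphi - \partial \log z_n, \qquad \overline\partial \sigma = \overline\partial \varphi - \overline\partial \log \bar z_n.
\end{align*}
Substituting these into the intrinsic formula for $\omega_h$ obtained above yields exactly \eqref{eq:sympform}.

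The final step is to read off $g_{j\bar k}$ from \eqref{eq:sympform}. Writing $\partial \varphi - \partial \log z_n = \sum_j (\varphi_j - \delta_{jn}/z_n)\,dz^j$ and the analogous antiholomorphic expression, and comparing coefficients of $i\,dz^j \wedge d\bar z^k$ on both sides of $\omega_h = i\sum g_{j\bar k}\,dz^j \wedge d\bar z^k$, gives \eqref{eq-gij} without further work. There is no genuine obstacle here; the only thing to be careful about is the sign from $\partial\overline\partial = -\overline\partial\partial$ and the fact that $\log z_n$ and $\log \bar z_n$ are pluriharmonic separately, which is what makes $\partial\overline\partial \sigma = \partial\overline\partial \varphi$ and lets the pure $\varphi$-term in \eqref{eq:sympform} appear with no $z_n$-corrections.
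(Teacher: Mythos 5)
Your computation is correct and is exactly the ``simple computation'' the paper alludes to without writing out: differentiate $\log\sigma$ to get $i\partial\overline\partial\log\sigma = ix\,\partial\overline\partial\sigma - ix^2\,\partial\sigma\wedge\overline\partial\sigma$, then substitute $\sigma = \varphi - \log z_n - \log\bar z_n$ using that $\log z_n$ and $\log\bar z_n$ are respectively holomorphic and antiholomorphic. The sign bookkeeping and the coefficient extraction at the end are both right, so nothing is missing.
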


Using \eqref{eq-gij} and the Ricci-flatness of the K\"ahler metric $-i\partial\overline\partial\varphi$ on $D$, one checks that $\omega_h$ is indeed a K\"ahler form on the whole region $\{0 < h < 1\}$ and that ${\rm Ric}(\omega_h) = -\omega_h$. Because the metric $-i\partial\overline\partial\varphi$ is actually flat in our setting, $\omega_h$ is complex hyperbolic, and its holomorphic sectional curvature, i.e., the Riemannian sectional curvature of $J$-invariant $2$-planes, is constant equal to $-\frac{2}{n+1}$. This is the only formal property of $\omega_h$ that changes if we allow for $D$ to be a general Calabi-Yau manifold.

We now study the natural foliation of our model space by the level sets
\begin{align}
Y_\epsilon := \{x = \epsilon^2\} = \{\varphi(z') - \log |z_n|^2 = \epsilon^{-2}\}.
\end{align}
We introduce a smooth real-valued coordinate chart $(x_\alpha,y_\alpha,\theta)$ on $Y_\epsilon$ by writing $z' = (z_1,\ldots,z_{n-1})$, $x_\alpha = {\rm Re}(z_\alpha)$ and $y_\alpha = {\rm Im}(z_\alpha)$ for all $\alpha \in \{1,\ldots,n-1\}$, and $z_n = re^{i\theta}$ as usual.

\begin{lemma}\label{l:formula}
Let $g_h$ denote the Riemannian metric associated with $\omega_h$ and define
\begin{equation}g_\epsilon := \frac{1}{n+1}\epsilon^{-2}g_h|_{Y_\epsilon}.\end{equation}
Then, with respect to the chart $(x_\alpha,y_\alpha,\theta)$, the metric $g_\epsilon$ is represented by
\begin{align}
\begin{split}
g_\epsilon &= \begin{pmatrix} (g_\epsilon)_{x_\alpha x_\beta} & (g_\epsilon)_{x_\alpha y_\beta} & (g_\epsilon)_{x_\alpha\theta}\\
(g_\epsilon)_{y_\alpha x_\beta} & (g_\epsilon)_{y_\alpha y_\beta} & (g_\epsilon)_{y_\alpha\theta}\\
(g_\epsilon)_{\theta x_\beta} & (g_\epsilon)_{\theta y_\beta} & (g_\epsilon)_{\theta\theta} \end{pmatrix} \\
&= \begin{pmatrix}
-\frac{1}{2}(\varphi_{x_\alpha x_\beta}+\varphi_{y_\alpha y_\beta}) + \frac{1}{2} \epsilon^2 \varphi_{y_\alpha}\varphi_{y_{\beta}} & -\frac{1}{2}(\varphi_{x_\alpha y_\beta} -\varphi_{y_\alpha x_\beta} ) -\frac{1}{2}\epsilon^2 \varphi_{y_\alpha}\varphi_{x_\beta}   &  \epsilon^2 \varphi_{y_\alpha}  \\  -\frac{1}{2}(\varphi_{y_\alpha x_\beta} -\varphi_{x_\alpha y_\beta} )  - \frac{1}{2}\epsilon^2 \varphi_{x_\alpha} \varphi_{y_\beta}& -\frac{1}{2}(\varphi_{y_\alpha y_\beta}+\varphi_{x_\alpha x_\beta}) +\frac{1}{2}\epsilon^2 \varphi_{x_\alpha}\varphi_{x_\beta}& -\epsilon^2\varphi_{x_\alpha}\\ 
 \epsilon^2\varphi_{y_\beta} & -\epsilon^2\varphi_{x_\beta}  & 2\epsilon^2
\end{pmatrix}.\label{eq-g-epsilon-1}
\end{split}
\end{align}
\end{lemma}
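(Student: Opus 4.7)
The statement is a direct coordinate computation, so my plan is tactical rather than conceptual. The idea is to push the complex-coordinate formula \eqref{eq-gij} forward to the real coordinates $(x_\alpha,y_\alpha,\theta)$ by using the implicit description of $Y_\epsilon$ as a graph over $(x_\alpha,y_\alpha,\theta)$-space, then divide by $(n+1)\epsilon^2$.

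First I would parametrize $Y_\epsilon$ by $(x_\alpha,y_\alpha,\theta)$, with $r=|z_n|$ determined from the constraint $\varphi(z')-2\log r=\epsilon^{-2}$. Implicit differentiation yields $r_{x_\alpha}=\tfrac12\varphi_{x_\alpha}r$ and $r_{y_\alpha}=\tfrac12\varphi_{y_\alpha}r$, while $\partial_\theta$ is already tangent. Therefore the tangent vectors to $Y_\epsilon$ expressed in the ambient chart are
\begin{align*}
\partial_{x_\alpha}\!\big|_{Y_\epsilon}=\partial_{x_\alpha}+\tfrac12 r\varphi_{x_\alpha}\partial_r,\quad
\partial_{y_\alpha}\!\big|_{Y_\epsilon}=\partial_{y_\alpha}+\tfrac12 r\varphi_{y_\alpha}\partial_r,\quad
\partial_\theta\!\big|_{Y_\epsilon}=\partial_\theta.
\end{align*}
Rewriting these via $\partial_{x_\alpha}=\partial_{z_\alpha}+\partial_{\bar z_\alpha}$, $\partial_{y_\alpha}=i(\partial_{z_\alpha}-\partial_{\bar z_\alpha})$, $\partial_r=e^{i\theta}\partial_{z_n}+e^{-i\theta}\partial_{\bar z_n}$, and $\partial_\theta=iz_n\partial_{z_n}-i\bar z_n\partial_{\bar z_n}$ expresses every real tangent vector as a sum of $\partial_{z_j}$ and $\partial_{\bar z_k}$ with explicit coefficients.

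Next I would feed these expressions into the Riemannian pairing $g_h(X,Y)=2\,\mathrm{Re}\,g_h(X^{1,0},Y^{0,1})$ and evaluate on $Y_\epsilon$ using \eqref{eq-gij} with $x=\epsilon^2$. Because $\varphi$ is independent of $z_n$, the ``correction'' vector $\partial_n-z_n^{-1}(\cdot)$ in \eqref{eq-gij} collapses nicely: for the pair $(X,Y)=(\partial_{x_\alpha}|_{Y_\epsilon},\partial_\theta)$ the terms involving $r\varphi_{x_\alpha}\partial_r$ and the $\partial_{z_\alpha}$ parts combine so that, after multiplying by $z_n$ from $\partial_\theta$ and using $\varphi_n=0$, everything reduces to $-(n+1)\epsilon^4\varphi_{x_\alpha}$; similarly for $(\partial_{y_\alpha}|_{Y_\epsilon},\partial_\theta)$. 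For $(\partial_\theta,\partial_\theta)$ only the quadratic term survives and contributes $2(n+1)\epsilon^4$. For the pure $(x_\alpha,x_\beta)$, $(y_\alpha,y_\beta)$ and $(x_\alpha,y_\beta)$ blocks one gets two kinds of contributions: the linear term $-\epsilon^2\varphi_{j\bar k}$ which, after expanding $\varphi_{j\bar k}=\tfrac14(\varphi_{x_\alpha x_\beta}+\varphi_{y_\alpha y_\beta})+\tfrac{i}{4}(\varphi_{y_\alpha x_\beta}-\varphi_{x_\alpha y_\beta})$ and doubling its real/imaginary part, reproduces the first summand in each entry of the matrix; and a quadratic term of the schematic form $\epsilon^4(\varphi_{x_\alpha}\pm\text{stuff})(\varphi_{x_\beta}\pm\text{stuff})$ coming from the $(\partial_{z_\alpha}+\tfrac12 r\varphi_{x_\alpha}\partial_r)$ assembly, which after expansion simplifies, because $\varphi_n=0$ and $r\partial_r$ paired with $z_n^{-1}\partial_n$ factors cancel the naive $r^{-2}$, leaving exactly the $\tfrac12\epsilon^2\varphi_{y_\alpha}\varphi_{y_\beta}$-type terms in \eqref{eq-g-epsilon-1}. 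Dividing everything by $(n+1)\epsilon^2$ gives the matrix displayed in the lemma.

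I do not expect any conceptual obstacle; the work is entirely bookkeeping. The only delicate point is the cancellation in the $(\partial_{x_\alpha},\partial_{x_\beta})$ and mixed blocks between the $\partial_r$-contribution from the graph term and the $z_n^{-1}$-term in \eqref{eq-gij}: naively these look singular as $r\to 0$, but the factor $r^2$ from $\partial_r\partial_{\bar r}$ pairings and the $(rz_n^{-1})$ factors combine to a bounded expression independent of $r$ and $\theta$, which is precisely why the final entries depend only on $\varphi$ and its first two real derivatives. Neither Ricci-flatness nor flatness of $-i\partial\overline{\partial}\varphi$ is used here; only the product structure $h=e^{-\varphi(z')}|z_n|^2$ and the formula \eqref{eq-gij} are needed.
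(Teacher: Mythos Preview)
Your approach is correct and is essentially the same as the paper's: both compute the pushforward of the coordinate frame to $Y_\epsilon$ via implicit differentiation of the graph constraint (the paper frames this as the fiberwise radial projection $\Pi$, obtaining the identical tangent vectors $\partial_{x_\alpha}+\tfrac12\varphi_{x_\alpha}r\partial_r$, etc.) and then evaluate the metric on these. The only packaging difference is that the paper uses $g_h(X,Y)=\omega_h(X,JY)$ together with the explicit $2$-form \eqref{eq:sympform} rather than the Hermitian coefficients $g_{j\bar k}$, which sidesteps the $(1,0)/(0,1)$ decomposition and slightly streamlines the bookkeeping (note, incidentally, that your quoted value for the $(\partial_{x_\alpha},\partial_\theta)$ pairing should be $(n+1)\epsilon^4\varphi_{y_\alpha}$, not $-(n+1)\epsilon^4\varphi_{x_\alpha}$).
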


\begin{proof}
Let $\tilde{g}_h := \frac{1}{n+1}g_h$ and $\tilde{\omega}_h := \frac{1}{n+1}\omega_h$. Let $\Pi: B \times S^1 \to Y_\epsilon$ denote the fiberwise radial projection, which is a diffeomorphism. Then we need to compute the matrix representing the Riemannian metric $\Pi^*(\epsilon^{-2}\tilde{g}_h|_{Y_\epsilon})$ with respect to the product coordinate frame $\partial_{x_\alpha}, \partial_{y_\alpha}, \partial_\theta$ on $B \times S^1$.\medskip\

\noindent \emph{The $\theta\theta$ component.} Here we use that $\Pi$ is $S^1$-equivariant, which implies that
\begin{align}\label{eq:thetatheta}
\Pi_*(\partial_\theta) = \partial_\theta.
\end{align}
Thus, using also \eqref{eq:sympform} and the formula $\partial \log z_n = \frac{dr}{r} + id\theta$, 
\begin{align}
\begin{split}
\Pi^*(\epsilon^{-2}\tilde{g}_h|_{Y_\epsilon})(\partial_\theta,\partial_\theta) &= \epsilon^{-2}\tilde\omega_h(\partial_\theta, J\partial_\theta) \\
&= 2\epsilon^2 {\rm Im}[(\partial \log z_n)(\partial_\theta) \cdot (\overline{\partial}\log\bar{z}_n)(r\partial_r)]\\
&= 2\epsilon^2.\label{eq-gthetatheta}
\end{split}
\end{align}

\noindent \emph{The $\theta x_\alpha$ and $\theta y_\alpha$ components.} Again because $\Pi$ is $S^1$-equivariant, we must have that 
\begin{align}\label{e:project}
\Pi_*(\partial_{x_\alpha}) = \partial_{x_\alpha} + F_\alpha r\partial_r, \;\, \Pi_*(\partial_{y_\alpha}) = \partial_{y_\alpha} + G_\alpha r\partial_r
\end{align}
for some real-valued functions $F_\alpha,G_\alpha$. Therefore,
\begin{align}\label{eq:thetay}
\begin{split}
\Pi^*(\epsilon^{-2}\tilde{g}_h|_{Y_\epsilon})(\partial_\theta,\partial_{x_\alpha}) &=\epsilon^{-2}\tilde{\omega}_h(\partial_\theta,J(\partial_{x_\alpha} + F r\partial_r))\\
&= \epsilon^{-2}\tilde\omega_h(\partial_\theta,\partial_{y_\alpha})\\
&= -2\epsilon^2 {\rm Im}[(\partial\varphi)(\partial_{y_\alpha}) \cdot (\overline{\partial}\log \bar{z}_n)(\partial_\theta)]\\
&= \epsilon^2\varphi_{y_\alpha}.
\end{split}
\end{align}
Here we have used the general formula $\partial\varphi = \frac{1}{2}(d\varphi - i d\varphi \circ J)$. Similarly,
\begin{align}\label{eq:thetax}
\Pi^*(\epsilon^{-2}\tilde{g}_h|_{Y_\epsilon})(\partial_\theta,\partial_{y_\alpha}) &= -\epsilon^2 \varphi_{x_\alpha}.
\end{align}

\noindent \emph{The $x_\alpha x_\beta$, $x_\alpha y_\beta$ and $y_\alpha y_\beta$ components.} Here we first need to determine the functions $F_\alpha,G_\alpha$ of \eqref{e:project}. Notice that $\Pi_*(\partial_{x_\alpha}), \Pi_*(\partial_{y_\alpha})$ must be tangent to $Y_\epsilon$ and the tangent bundle of $Y_\epsilon$ is the kernel of the differential of the function $\varphi(z') - \log |z_n|^2$. Using this, it is easy to see that
\begin{align}\label{e:project2}
F_\alpha = \frac{1}{2}\varphi_{x_\alpha}, \;\, G_\alpha = \frac{1}{2}\varphi_{y_\alpha}.
\end{align}
Thus,
\begin{align}
\begin{split}
\Pi^*(\epsilon^{-2}&\tilde{g}_h|_{Y_\epsilon})(\partial_{x_\alpha},\partial_{x_\beta}) = \epsilon^{-2}\tilde\omega_h(\partial_{x_\alpha} + F_\alpha r\partial_r, \partial_{y_\beta} + F_\beta \partial_\theta)\\
&= (-i\partial\overline{\partial}\varphi)(\partial_{x_\alpha},\partial_{y_\beta})-2\epsilon^2{\rm Im}\left[\left((\partial\varphi)(\partial_{x_\alpha}) - \frac{1}{2}\varphi_{x_\alpha}\right) \cdot \left((\overline{\partial}\varphi)(\partial_{y_\beta}) + \frac{i}{2}\varphi_{x_\beta}\right)\right]\\
&=-\frac{1}{2}(\varphi_{x_\alpha x_\beta} + \varphi_{y_\alpha y_\beta}) + \frac{1}{2}\epsilon^2\varphi_{y_\alpha}\varphi_{y_\beta}.
\end{split}
\end{align}
Here we have applied the general K\"ahler geometry formula
\begin{align}
(i\partial\overline{\partial}\varphi)(U,JV) = \frac{1}{2}((\nabla^2 \varphi)(U,V) + (\nabla^2 \varphi)(JU,JV))
\end{align}
with respect to the standard Euclidean K\"ahler metric on $B$. Similarly,
\begin{align}
\begin{split}
\Pi^*(\epsilon^{-2}&\tilde{g}_h|_{Y_\epsilon})(\partial_{x_\alpha},\partial_{y_\beta}) = \epsilon^{-2}\tilde\omega_h(\partial_{x_\alpha} + F_\alpha r\partial_r, -\partial_{x_\beta} + G_\beta \partial_\theta)\\
&= (-i\partial\overline{\partial}\varphi)(\partial_{x_\alpha},-\partial_{x_\beta})-2\epsilon^2{\rm Im}\left[\left((\partial\varphi)(\partial_{x_\alpha}) - \frac{1}{2}\varphi_{x_\alpha}\right) \cdot \left((\overline{\partial}\varphi)(-\partial_{x_\beta}) + \frac{i}{2}\varphi_{y_\beta}\right)\right]\\
&=-\frac{1}{2}(\varphi_{x_\alpha y_\beta} - \varphi_{y_\alpha x_\beta}) - \frac{1}{2}\epsilon^2\varphi_{y_\alpha}\varphi_{x_\beta},
\end{split}
\end{align}
as well as
\begin{align}
\begin{split}
\Pi^*(\epsilon^{-2}&\tilde{g}_h|_{Y_\epsilon})(\partial_{y_\alpha},\partial_{y_\beta}) = \epsilon^{-2}\tilde\omega_h(\partial_{y_\alpha} + G_\alpha r\partial_r, -\partial_{x_\beta} + G_\beta \partial_\theta)\\
&= (-i\partial\overline{\partial}\varphi)(\partial_{y_\alpha},-\partial_{x_\beta})-2\epsilon^2{\rm Im}\left[\left((\partial\varphi)(\partial_{y_\alpha}) - \frac{1}{2}\varphi_{y_\alpha}\right) \cdot \left((\overline{\partial}\varphi)(-\partial_{x_\beta}) + \frac{i}{2}\varphi_{y_\beta}\right)\right]\\
&=-\frac{1}{2}(\varphi_{y_\alpha y_\beta} + \varphi_{x_\alpha x_\beta}) + \frac{1}{2}\epsilon^2\varphi_{x_\alpha}\varphi_{x_\beta}.
\end{split}
\end{align}
This concludes the proof of the lemma.
\end{proof}

So far we have described the restriction of the metric $g_h$ from the total space $L$ to the level sets $Y_\epsilon$. We still need to calculate the $g_h$-unit normal of $Y_\epsilon$ in the given coordinates on $L$. This turns out to be  straightforward. The answer also tells us that the $g_h$-gradient of the function $\rho = -\sqrt{(n+1)/2}\log x$ has constant length $1$, so in particular the leaves of our foliation are equidistant.

\begin{lemma}\label{lem-n}
The $g_h$-unit normal of $Y_\epsilon$ pointing towards the cuspidal end is
\begin{align}\label{eq:lem-n}
\mathbf{n} = -\frac{\epsilon^{-2}}{\sqrt{2(n+1)}}r \partial_r.
\end{align}
Here $r\partial_r  =- J\partial_\theta $ is globally defined because $\partial_\theta$ generates the canonical $S^1$-action on $L$.
\end{lemma}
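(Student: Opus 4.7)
The plan is a direct calculation using the explicit formula \eqref{eq:sympform} for $\omega_h$ together with the K\"ahler identity $g_h(X,Y) = \omega_h(X,JY)$. As a preliminary, I note that $r\partial_r = -J\partial_\theta$: indeed $\partial_\theta = -y_n\partial_{x_n} + x_n\partial_{y_n}$ in the given chart, and applying the standard complex structure yields $-r\partial_r$. Since $\partial_\theta$ generates the canonical $S^1$-action on the fibers of $L$, both $\partial_\theta$ and $r\partial_r = -J\partial_\theta$ are well-defined on $L\setminus D$ independently of the trivialization.

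The first step is to compute $|r\partial_r|^2_{g_h}$. By the K\"ahler identity, this equals $\omega_h(r\partial_r, \partial_\theta)$. The term $-ix\partial\overline\partial\varphi$ in \eqref{eq:sympform} only involves differentials in the $z'$-directions and hence vanishes on any pair of fiber-direction vectors. Evaluating the remaining term using $(\partial\log z_n)(r\partial_r) = 1$, $(\partial\log z_n)(\partial_\theta) = i$, together with the vanishing of $(\partial\varphi)(r\partial_r)$ and $(\partial\varphi)(\partial_\theta)$, gives $|r\partial_r|^2_{g_h} = 2(n+1)x^2$, which on $Y_\epsilon$ equals $2(n+1)\epsilon^4$. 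Hence the two unit vectors along $r\partial_r$ are $\pm\,\epsilon^{-2}/\sqrt{2(n+1)}\cdot r\partial_r$.

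The second step is to verify that $r\partial_r$ is $g_h$-orthogonal to $TY_\epsilon$. Using the frame identified in the proof of Lemma \ref{l:formula}, namely $\partial_\theta$ together with the horizontal lifts $\partial_{x_\alpha} + \tfrac{1}{2}\varphi_{x_\alpha}r\partial_r$ and $\partial_{y_\alpha} + \tfrac{1}{2}\varphi_{y_\alpha}r\partial_r$ from \eqref{e:project}--\eqref{e:project2}, orthogonality to $\partial_\theta$ is immediate: $g_h(r\partial_r, \partial_\theta) = \omega_h(r\partial_r, J\partial_\theta) = -\omega_h(r\partial_r, r\partial_r) = 0$. For the horizontal lifts, an analogous evaluation of \eqref{eq:sympform} (again the $\partial\overline\partial\varphi$ piece contributes nothing because $r\partial_r$ is a fiber direction) yields $g_h(r\partial_r, \partial_{x_\alpha}) = \omega_h(r\partial_r, \partial_{y_\alpha}) = -(n+1)x^2\varphi_{x_\alpha}$, and this is precisely cancelled by $F_\alpha\,|r\partial_r|^2_{g_h}$; the $y_\alpha$-case is identical.

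Finally, to fix the sign: the cuspidal end is $h \to 0$, equivalently $|z_n| \to 0$, so the unit normal pointing towards it must have the direction of decreasing $r$, giving $\mathbf{n} = -\epsilon^{-2}/\sqrt{2(n+1)}\cdot r\partial_r$ as claimed. The only real effort is the careful sign bookkeeping under the K\"ahler conventions; no deeper idea is needed. As a consistency check, $d\sigma(r\partial_r) = -d\log|z_n|^2(r\partial_r) = -2$ gives $d\rho(\mathbf{n}) = \sqrt{(n+1)/2}\,x\,d\sigma(\mathbf{n}) = 1$, which reconfirms that $\nabla^{g_h}\rho$ has unit norm and the foliation $\{Y_\epsilon\}$ is equidistant.
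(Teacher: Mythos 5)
Your proposal is correct and follows essentially the same route as the paper: use the frame $\Pi_*(\partial_\theta),\Pi_*(\partial_{x_\alpha}),\Pi_*(\partial_{y_\alpha})$ from the proof of Lemma \ref{l:formula} to check orthogonality of $r\partial_r$ to $TY_\epsilon$, then normalize via $|r\partial_r|^2_{g_h}=2(n+1)\epsilon^4$ and fix the sign by noting that the cusp corresponds to $r\to 0$. The only cosmetic difference is that you re-evaluate \eqref{eq:sympform} directly where the paper simply cites the already-computed components \eqref{eq:thetax} and \eqref{eq-gthetatheta}.
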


\begin{proof}
We first prove that $r\partial_r$ is $g_h$-orthogonal to the tangent space of $Y_\epsilon$, which is locally spanned by the vector fields $\Pi_*(\partial_\theta), \Pi_*(\partial_{x_\alpha}), \Pi_*(\partial_{y_\alpha})$ from the proof of Lemma \ref{l:formula}. Indeed, by  \eqref{eq:thetatheta},
\begin{align}
g_h(r\partial_r, \Pi_*(\partial_\theta)) = -\omega_h(\partial_\theta, \partial_\theta) = 0.
\end{align}
Secondly, by \eqref{e:project}, \eqref{e:project2} and \eqref{eq:thetax}, \eqref{eq-gthetatheta},
\begin{align}
g_h(r\partial_r, \Pi_*(\partial_{x_\alpha})) &= \omega_h\left(\partial_{x_\alpha}  + \frac{1}{2}\varphi_{x_\alpha} r\partial_r,\partial_\theta\right) = (n+1)\left(-\epsilon^4 \varphi_{x_\alpha} + \frac{1}{2}\varphi_{x_\alpha} \cdot 2\epsilon^4\right) = 0,
\end{align}
and orthogonality to $\Pi_*(\partial_{y_\alpha})$ is similar.
Thus, $\mathbf{n} = f r\partial_r$ for some function $f$, where
\begin{align}
1 = \omega_h(\mathbf{n} , J \mathbf{n}) = f^2 (n+1) \tilde{\omega}_h(\partial_\theta,J\partial_\theta) = 2(n+1)\epsilon^4 f^2
\end{align}
again thanks to \eqref{eq-gthetatheta}.
\end{proof}

The following lemma is not necessary for this paper but we find it helpful for intuition. It says that the orbits of the unit normal field $\mathbf{n}$ of our foliation are geodesics and the leaves have constant mean curvature independent of $\epsilon$. This also holds if $D$ is a non-flat Calabi-Yau manifold. If $D$ is flat, these are well-known properties of the universal covers of the leaves, which are horospheres in $\C H^n$.

\begin{lemma}\label{l:busemann}
We have $\nabla_{\mathbf{n}} \mathbf{n} = 0$ and the mean curvature of $Y_\epsilon$ with respect to $\mathbf{n}$ is $-n/\hspace{-0.5mm}\sqrt{2(n+1)}$.
\end{lemma}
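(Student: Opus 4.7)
The plan is to recognize $\mathbf{n}$ as the $g_h$-gradient of $\rho$, after which both conclusions follow from general identities. Lemma~\ref{lem-n} already identifies $\mathbf{n}$ as the $g_h$-unit normal of $Y_\epsilon$ pointing toward the cuspidal end. A short computation in the chart of Lemma~\ref{lem:totallybasic} (or equivalently the Busemann remark following \eqref{eq:fctns}) shows $|\nabla\rho|_{g_h}=1$, which identifies $\nabla\rho$ as a unit normal to $\{\rho=\mathrm{const}\}=Y_\epsilon$ pointing in the direction of increasing $\rho$, i.e.\ toward the cusp. Hence $\mathbf{n}=\nabla\rho$.

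Given this, the claim $\nabla_\mathbf{n}\mathbf{n}=0$ follows from the familiar one-line argument using symmetry of the Hessian: for any vector field $X$,
\[
g_h(\nabla_{\nabla\rho}\nabla\rho,X)=(\mathrm{Hess}\,\rho)(\nabla\rho,X)=(\mathrm{Hess}\,\rho)(X,\nabla\rho)=g_h(\nabla_X\nabla\rho,\nabla\rho)=\tfrac12 X(|\nabla\rho|^2_{g_h})=0,
\]
so the orbits of $\mathbf{n}$ are unit-speed geodesics, equivalently the leaves $Y_\epsilon$ are equidistant.

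For the mean curvature, since $|\mathbf{n}|_{g_h}=1$ we have $H=\operatorname{div}_{g_h}\mathbf{n}=\Delta_{g_h}\rho=\sqrt{(n+1)/2}\,\Delta_{g_h}\log\sigma$. The definition \eqref{eq-model-KE} rearranges to $\ddbar\log\sigma=-\omega_h/(n+1)$, and taking the trace with respect to $\omega_h$ yields $\operatorname{tr}_{\omega_h}(\ddbar\log\sigma)=-n/(n+1)$, hence $\Delta_{g_h}\log\sigma=-n/(n+1)$ and $H=-n/\sqrt{2(n+1)}$. The argument uses only the Kähler-Einstein property $\mathrm{Ric}(\omega_h)=-\omega_h$, which explains why the conclusions persist whenever $D$ is a general compact Ricci-flat Kähler manifold, as noted in the statement. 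There is no real obstacle; the only conceptual step is the identification $\mathbf{n}=\nabla\rho$, which is already implicit in Lemma~\ref{lem-n}.
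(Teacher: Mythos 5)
Your argument takes a genuinely different and, for the first assertion, cleaner route than the paper. The paper proves $\nabla_{\mathbf{n}}\mathbf{n}=0$ by differentiating the explicit formula \eqref{eq:lem-n}, using that $r\partial_r$ is holomorphic and that $\partial_\theta$ is Killing; you instead observe $\mathbf{n}=\nabla\rho$ (which is legitimate: $\nabla\rho$ is normal to the level sets, points toward the cusp, and has unit length by the computation recorded after Lemma \ref{l:formula}) and invoke the symmetry of $\mathrm{Hess}\,\rho$. That part is correct and is exactly the ``Busemann function'' argument the paper alludes to.

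The mean curvature computation, however, has a factor-of-two gap. Under the paper's normalization $g_h(X,Y)=\omega_h(X,JY)$ --- the one for which $|\nabla\rho|_{g_h}=1$, which you need in order to have $\mathbf{n}=\nabla\rho$ --- the Laplace--Beltrami operator is \emph{twice} the complex trace: $\Delta_{g_h}f=2\,g_h^{\bar kj}f_{j\bar k}=2\operatorname{tr}_{\omega_h}(\ddbar f)$ (test this on $\C^n$ with $\omega=\frac i2\sum dz^j\wedge d\bar z^j$). Hence your method actually yields $H=\operatorname{div}_{g_h}\mathbf{n}=\Delta_{g_h}\rho=\sqrt{(n+1)/2}\cdot 2\cdot(-\tfrac{n}{n+1})=-2n/\sqrt{2(n+1)}$, twice the value in the statement. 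This is not a defect of your approach: the sum of the principal curvatures of $Y_\epsilon$ really is $-2n/\sqrt{2(n+1)}$, since the $2n-2$ torus directions have length $\propto\epsilon\propto e^{-\rho/\sqrt{2(n+1)}}$ and the circle direction has length $\propto\epsilon^2$, giving principal curvatures $-1/\sqrt{2(n+1)}$ (with multiplicity $2n-2$) and $-2/\sqrt{2(n+1)}$ (once); equivalently, horospheres in $\C H^n$ with holomorphic sectional curvature $-4$ have mean curvature $2n$, and rescaling to curvature $-\frac{2}{n+1}$ divides this by $\sqrt{2(n+1)}$. The paper's own proof arrives at $-n/\sqrt{2(n+1)}$ only because \eqref{eq:dA} raises $(n+1)\epsilon^2$ to the power $\frac{n-1}{2}$ rather than $\frac{2n-1}{2}$ (i.e., it treats $\dim Y_\epsilon$ as $n-1$ instead of $2n-1$), so that $dA$ comes out proportional to $\sigma^{-n/2}$ instead of $\sigma^{-n}$. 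In short: your strategy is sound and arguably preferable, but as written it reaches the stated constant only through a compensating slip in the Laplacian convention; carried out consistently it shows that the constant in the lemma should be $-2n/\sqrt{2(n+1)}$. (This has no effect elsewhere, as the lemma is explicitly stated to be for intuition only.)
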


\begin{proof}
Replacing $\epsilon^{-2} = \sigma$ in \eqref{eq:lem-n}, differentiating, and using that $(r\partial_r)(\sigma) = -2$, we get
\begin{align}
2(n+1)\nabla_{\mathbf{n}} \mathbf{n} = -2\sigma r\partial_r + \sigma^2 \nabla_{r\partial_r}(r \partial_r).
\end{align}
On the other hand, $\langle \nabla_{\mathbf{n}}\mathbf{n},\mathbf{n}\rangle = 0$. Thus, it suffices to prove that $\nabla_{r\partial_r}(r \partial_r)$ is orthogonal to the tangent space of $Y_\epsilon$. For this, note that $r\partial_r$ is a holomorphic vector field, so its covariant derivative is $J$-linear. Because $\nabla J = 0$, we conclude that $\nabla_{r\partial_r}(r \partial_r) = - \nabla_{\partial_\theta}\partial_\theta$, so it suffices to prove that $\langle \nabla_{\partial_\theta}\partial_\theta, \mathbf{t}\rangle = 0$ for all vectors $\mathbf{t}$ tangent to $Y_\epsilon$. This can be done as follows. Since $\partial_\theta$ is a Killing vector field,
\begin{align}
\langle \nabla_{ {\partial_\theta}}\partial_\theta, \mathbf{t} \rangle = -\langle\nabla_\mathbf{t} \partial_\theta, \partial_\theta \rangle
= -\frac{1}{2} \mathbf{t} \langle\partial_\theta, \partial_\theta \rangle =-(n+1) \mathbf{t} (\sigma^2) = 0,
\end{align}
as desired, using also \eqref{eq-gthetatheta}. This proves that $\nabla_{\mathbf{n}}\mathbf{n} = 0$.

Next, we compute the mean curvature of $Y_\epsilon$. The unoriented area element of $Y_\epsilon$ is 
\begin{align}\label{eq:dA}
dA = \sqrt{\det (g_h|_{Y_\epsilon})}\, d\theta \, dx_\alpha \, dy_\alpha = (n+1)^{\frac{n-1}{2}}\epsilon^{n-1}\sqrt{\det (g_\epsilon)}\,d\theta \, dx_\alpha \, dy_\alpha.
\end{align}
By \eqref{eq-g-epsilon-1} and column operations, $\det (g_\epsilon)$ equals the determinant of 
\begin{align}
 \begin{pmatrix}
-\frac{1}{2}(\varphi_{x_\alpha x_\beta}+\varphi_{y_\alpha y_\beta})& -\frac{1}{2}(\varphi_{x_\alpha y_\beta} -\varphi_{y_\alpha x_\beta} )    &  \epsilon^2 \varphi_{y_\alpha}  \\  -\frac{1}{2}(\varphi_{y_\alpha x_\beta} -\varphi_{x_\alpha y_\beta} )  & -\frac{1}{2}(\varphi_{y_\alpha y_\beta}+\varphi_{x_\alpha x_\beta})& -\epsilon^2\varphi_{x_\alpha}\\ 
0 & 0 & 2\epsilon^2
\end{pmatrix},
\end{align}
which equals $\epsilon^2 f(x_\alpha, y_\alpha)$ with $f$  smooth and independent of $\epsilon$. Thus,
\begin{align}
\frac{\mathcal{L}_{\mathbf{n}} dA}{dA} = \frac{\mathbf{n}(\sigma^{-\frac{n}{2}})}{\sigma^{-\frac{n}{2}}} = \frac{\frac{n}{2} \cdot (r\partial_r)(\sigma)}{\sqrt{2(n+1)}} = -\frac{n}{\sqrt{2(n+1)}},
\end{align}
and this is the mean curvature of $Y_\epsilon$ with respect to $\mathbf{n}$.
\end{proof}

\subsection{The model operator}\label{ss:modelop}

We now compute the linearization of the Monge-Amp\`ere operator at our model metric $\omega_h$. With $(g_{j\bar{k}})$ as in \eqref{eq-gij}, this is the operator
\begin{align}\label{eq:defineLh}
L_h(u) := \frac{d}{dt}\biggr|_{t=0} \frac{(\omega_h + \ddbar(tu))^n}{e^{tu}\omega_h^n} = g^{\bar{k}j}  u_{j \bar{k}} - u.
\end{align}
Fix any local holomorphic chart $(z',z_n)$ on the total space as in Section \ref{ss:modelmetric}, so that $h = e^{-\varphi(z')}|z_n|^2$. Consider the associated smooth chart $(x_\alpha,y_\alpha,x,\theta)$, where $x = \frac{1}{\sigma}$. In this chart, we can still define the complex partials $u_\alpha = \partial_{z_\alpha}u$ and $u_{\bar\alpha}= \partial_{\bar{z}_\alpha}u$ in the usual way for all $\alpha\in\{1,\ldots,n-1\}$.

\begin{lemma}\label{lem:Lu}
In any chart $(x_\alpha,y_\alpha,x,\theta)$ as above, the linearized operator is given by
\begin{align}\label{eq-Lu}
\begin{split}
L_h(u)   = \frac{1}{n+1}(x^2 u_{xx} + (n+1)xu_x -(n+1) u  -x^{-1} \varphi^{\bar{\beta} \alpha} u_{\alpha \bar \beta}  + (2x)^{-2} u_{\theta \theta})  + F,\\
F = F(x_\alpha,y_\alpha, x^{-1} u_{\theta\alpha}, x^{-1} u_{\theta\bar\alpha},x^{-1} u_{\theta\theta}),
\end{split}
\end{align}
where $F$ is smooth in $x_\alpha,y_\alpha$ and linear homogeneous in the other arguments.
\end{lemma}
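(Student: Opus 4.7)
The plan is to invert $g_{j\bar k}$ by block matrix inversion, convert the Wirtinger partials $\partial_{z_j},\partial_{\bar z_k}$ into the chart vector fields $\partial_{x_\alpha},\partial_{y_\alpha},\partial_x,\partial_\theta$ via the chain rule, substitute into $g^{\bar kj}u_{j\bar k}-u$, and then check that after grouping by derivative type all the non-obvious contributions either cancel or assemble into the asserted $F$.

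For the block inversion, split $j\in\{1,\dots,n\}$ into $\alpha\in\{1,\dots,n-1\}$ and $n$. From \eqref{eq-gij} the matrix $g$ has upper-left block $P_{\alpha\bar\beta}=(n+1)(-x\varphi_{\alpha\bar\beta}+x^2\varphi_\alpha\varphi_{\bar\beta})$, column $q_\alpha=-(n+1)x^2\varphi_\alpha/\bar z_n$, and corner $s=(n+1)x^2/|z_n|^2$. Crucially, the rank-one piece in $P$ is exactly the Schur correction: $qs^{-1}q^\ast=(n+1)x^2\varphi_\alpha\varphi_{\bar\beta}$, so $P-qs^{-1}q^\ast=-(n+1)x\varphi_{\alpha\bar\beta}$. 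Writing $V^\alpha:=\varphi^{\bar\beta\alpha}\varphi_{\bar\beta}$ and $|\nabla\varphi|^2:=V^\alpha\varphi_\alpha$, the standard block-inverse formula then yields
\[
g^{\bar\beta\alpha}=-\frac{\varphi^{\bar\beta\alpha}}{(n+1)x},\ \ g^{\bar n\alpha}=-\frac{\bar z_n V^\alpha}{(n+1)x},\ \ g^{\bar n n}=\frac{|z_n|^2(1-x|\nabla\varphi|^2)}{(n+1)x^2},
\]
together with $g^{\bar\alpha n}=\overline{g^{\bar n\alpha}}$. The chain rule gives $\partial_{z_\alpha}=\tilde\partial_\alpha-x^2\varphi_\alpha\partial_x$ for $\alpha<n$ and $\partial_{z_n}=z_n^{-1}(x^2\partial_x-\tfrac{i}{2}\partial_\theta)$, with $\tilde\partial_\alpha:=\tfrac{1}{2}(\partial_{x_\alpha}-i\partial_{y_\alpha})$ the restricted holomorphic derivative in the new chart; applying these twice and noting a cancellation of $u_{x\theta}$ cross-terms, one finds $u_{n\bar n}=|z_n|^{-2}(2x^3u_x+x^4 u_{xx}+\tfrac14 u_{\theta\theta})$ along with analogous expansions of $u_{\alpha\bar n}$ and $u_{\alpha\bar\beta}$ in $\tilde\partial_\alpha\tilde\partial_{\bar\beta}u,\tilde\partial_\alpha u_x,u_x,u_{xx}$ and $\theta$-derivatives, with polynomial coefficients in $x$ and the $\varphi_\bullet$.

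The verification of the cancellations is then a patient but mechanical accounting. In the $u_{xx}$-line, the contributions $g^{\bar\beta\alpha}u_{\alpha\bar\beta}$, $2\,\mathrm{Re}(g^{\bar n\alpha}u_{\alpha\bar n})$, and $g^{\bar n n}u_{n\bar n}$ carry the coefficients $-1,+2,-1$ of $x^3|\nabla\varphi|^2$ and so sum to zero; the $\tilde\partial u_x$ cross-terms from the first two cancel identically; the pure $u_x$-coefficient collapses to $\tfrac{1}{n+1}[(n-1)+2]x=x$; and the $|\nabla\varphi|^2 u_x$ terms cancel as $-2-2+4=0$. What survives are the main terms $\frac{x^2 u_{xx}}{n+1}+xu_x+\frac{u_{\theta\theta}}{4(n+1)x^2}-\frac{\varphi^{\bar\beta\alpha}\tilde\partial_\alpha\tilde\partial_{\bar\beta}u}{(n+1)x}$, plus two residuals: $-\frac{|\nabla\varphi|^2}{4(n+1)}\,x^{-1}u_{\theta\theta}$, already of the claimed $F$-form, and $\frac{1}{(n+1)x}\mathrm{Im}(V^\alpha\tilde\partial_\alpha u_\theta)$. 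For the latter, substituting $\tilde\partial_\alpha u_\theta=u_{\theta\alpha}+x^2\varphi_\alpha u_{x\theta}$ kills the $u_{x\theta}$-contribution because $V^\alpha\varphi_\alpha=|\nabla\varphi|^2$ is real, leaving a smooth linear combination of $x^{-1}u_{\theta\alpha}$ and $x^{-1}u_{\theta\bar\alpha}$. Subtracting $u$ then produces \eqref{eq-Lu}. The main obstacle throughout is purely algebraic bookkeeping of how the many $|\nabla\varphi|^2$- and $V^\alpha$-factors conspire to cancel across the four contributions to $g^{\bar kj}u_{j\bar k}$.
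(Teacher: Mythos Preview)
Your proof is correct and follows essentially the same route as the paper: compute the inverse metric (the paper verifies \eqref{eq-ginverse} by direct multiplication rather than your Schur complement, but the formulas agree with your $V^\alpha,|\nabla\varphi|^2$ playing the role of the paper's $\varphi^{\bar\alpha\gamma}\varphi_\gamma, Q$), apply the same chain-rule conversions \eqref{eq-partial}, and then bookkeep the three contributions $g^{\bar\beta\alpha}u_{\alpha\bar\beta}$, $2\,\mathrm{Re}(g^{\bar\alpha n}u_{n\bar\alpha})$, $g^{\bar n n}u_{n\bar n}$, tracking the same cancellations of the $Q$-weighted $u_{xx},u_x$ terms and the $\varphi_\bullet$-weighted $u_{x\bar\beta}$ cross-terms that the paper highlights as boxed and double-underlined in \eqref{eq-careful1}--\eqref{eq-careful3}.
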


\begin{proof}
In the chart $(z',z_n)$, the definition \eqref{eq:defineLh} of $L_h$ can be expanded into
\begin{align}\label{eq-veryhelpful}
L_h (u) = g^{\bar{k}j}  u_{j \bar{k}} - u = g^{\bar \beta \alpha} u_{\alpha \bar \beta} +2{\rm Re}( g^{\bar{\alpha}n}  u_{\bar \alpha n}) + g^{\bar n n} u_{n \bar n}   - u.
\end{align}
With $Q := \varphi^{\bar{\beta} \gamma}  \varphi_{\bar{\beta}}  \varphi_\gamma$, the inverse of the metric tensor is given for $\alpha,\beta\in\{1,\ldots,n-1\}$ by
\begin{align}\label{eq-ginverse}
g^{\bar{\beta} \alpha} = -\frac{\varphi^{\bar{\beta} \alpha}}{(n+1)x},\;\,  g^{\bar \alpha n} =  -\frac{\varphi^{\bar{\alpha} \gamma}\varphi_\gamma z_n}{(n+1)x}, \;\, g^{\bar n n} = \frac{r^2(1 - Q x)}{(n+1)x^2}.
\end{align}
This can be checked by direct multiplication, using \eqref{eq-gij}.

Consider the change of coordinates $(z', z_n) \mapsto (x_\alpha, y_\alpha, x, \theta)$. A simple calculation shows that
\begin{align}\label{eq-partial}
\partial_{z_\alpha} \mapsto \partial_{z_\alpha}  -x^2 \varphi_\alpha \partial_x, \;\, \partial_{z_n} \mapsto \frac{x^2}{z_n}\partial_x - \frac{i}{2z_n}\partial_\theta.
\end{align}
Thus, under the new coordinates $(x_\alpha,y_\alpha, x, \theta)$, 
\begin{align}\label{eq-careful1}
\begin{split}
(n+1)g^{\bar \beta \alpha} u_{\alpha \bar \beta} &=  -x^{-1}\varphi^{\bar{\beta} \alpha}( \partial_{z_\alpha}   -x^2 \varphi_\alpha \partial_x )( u_{\bar\beta}  - x^2 \varphi_{\bar \beta} u_x)\\
& =  -x^{-1} \varphi^{\bar{\beta} \alpha} u_{\alpha \bar \beta} + (n-1) xu_x + \boxed{x\varphi^{\bar\beta\alpha}(\varphi_\alpha u_{x\bar\beta} + \varphi_{\bar\beta} u_{x\alpha})} - \underline{\underline{Q(x^3  u_{xx} + 2x^2 u_x)}}.
\end{split}
\end{align}
Similarly, with ${F}_1$ as in the statement of the lemma,
\begin{align}\label{eq-careful2}
\begin{split}
(n+1)g^{\bar{\alpha} n }  u_{ n \bar \alpha} &= - \frac{z_n}{x} \varphi^{\bar\alpha\gamma} \varphi_\gamma \left(\frac{x^2}{z_n} \partial_x - \frac{i}{2z_n} \partial_\theta \right)  \left(u_{\bar\alpha}  -x^2 \varphi_{\bar\alpha} u_x \right)\\
&= \boxed{-x\varphi^{\bar\alpha\gamma}\varphi_{\gamma} u_{x\bar\alpha}} + \underline{\underline{Q(x^3 u_{xx} +2x^2u_x)}} - \frac{i}{2} Q x u_{x\theta} + {F}_1(x_\alpha,y_\alpha, x^{-1}u_{\theta\bar\alpha}).
\end{split}
\end{align} The last contribution is slightly more complicated:
\begin{align}\label{eq-careful3}
\begin{split}
(n+1)g^{\bar{n}n}  u_{n \bar{n}} &=\frac{r^2}{x^2}(1 - Qx) \left(\frac{x^2}{z_n} \partial_x - \frac{i}{2z_n} \partial_\theta  \right) \left(\frac{x^2}{\bar{z}_n}u_x + \frac{i}{2\bar{z}_n} u_\theta \right)\\
&= x^2 u_{xx} + 2x u_x +(2x)^{-2} u_{\theta \theta} - \underline{\underline{Q(x^3  u_{xx} + 2x^2 u_x)}} + {F}_2(x_\alpha,y_\alpha, x^{-1}u_{\theta\theta}).
\end{split}
\end{align}
The delicate part here is the coefficients of $u_x$ and $u_\theta$, which arise as follows:
\begin{align}
\frac{r^2}{x^2}\left(\frac{x^2}{z_n}\partial_x -\frac{i}{2z_n}\partial_\theta\right)\left(\frac{x^2}{\bar{z}_n}\right) &= \frac{r^2}{x^2}\partial_{z_n}\left(\frac{x^2}{\bar{z}_n}\right) = \frac{r^2}{x^2}\frac{1}{\bar{z}_n}\partial_{z_n}(\varphi - \log |z_n|^2)^{-2} = 2x
\end{align}
for $u_x$, and similarly the coefficient of $u_\theta$ vanishes.

When we add the contributions of \eqref{eq-careful1}, \eqref{eq-careful2} and \eqref{eq-careful3} according to \eqref{eq-veryhelpful}, the boxed and the underlined terms cancel. Cancellation of the boxed terms will be important in Section \ref{sec:exp-decay}.
\end{proof}

\subsection{Results of Datar-Fu-Song}\label{ss:dfs}

In the setting of the Main Theorem, consider a member $h$ of the canonical $1$-parameter family of Hermitian metrics on $L \to D$ and its associated model metric $\omega_h$. For any tubular neighborhood $V$ of the zero section $D \subset L$ of the form $V = \{h \leq \delta\}$ for some $\delta \in (0,1)$, a special case of Datar-Fu-Song \cite[Thm 4.1]{DFS} says that for all $\beta\in C^\infty(\partial V)$, the Dirichlet problem 
\begin{align}
\begin{split}\label{eq:dirprob}
(\omega_h + i\partial\overline{\partial} u)^n = e^u \omega_h^n\;\,\text{on}\;\,V\setminus D,\\
u = \beta\;\,\text{on}\;\,\partial V,
\end{split}
\end{align}
has a smooth solution $u$ such that the K\"ahler-Einstein metric $\omega_h+ i\partial\overline{\partial} u$ on $V \setminus D$ is complete.

Conversely assume that for some closed tubular neighborhood $U$ of the zero section, there exists a K\"ahler-Einstein metric $\omega$ on $U \setminus D$ such that ${\rm Ric}(\omega) = -\omega$. Thanks to the K\"ahler-Einstein equation, we have that  $\omega = \omega_h + i\partial\overline{\partial} u$, where $u$ is the log volume ratio of the two metrics. Equivalently,
\begin{align}\label{1}
 (\omega_h + i\partial \overline{\partial} u)^n = e^{u} \omega_h^n.
\end{align}
Then the following result is a special case of \cite[Thm 1.3]{DFS}. This still holds in the unbounded curvature setting where $D$ is a non-flat Calabi-Yau manifold.

\begin{theorem}\label{thm:DFS0}
If $\omega$ is complete, then there exists a constant $C$ such that for all $0 < \eta \leq 1$ we have that $|u(p)| \leq C\eta$ for all $p \in U\setminus D$ such that ${\rm dist}_{\omega_h}(p,\partial U) \geq \eta^{-1}$ and ${\rm dist}_{\omega}(p,\partial U) \geq \eta^{-1}$.
\end{theorem}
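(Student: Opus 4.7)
The plan is to run a maximum principle argument for the Monge-Amp\`ere equation \eqref{1} with a barrier built from the Busemann function $\rho$ of \eqref{eq:fctns}, viewing \eqref{1} as a nonlinear perturbation of its linearization $L_h = \Delta_{\omega_h} - \mathrm{id}$. To begin, I would establish an a priori $L^\infty$ bound. At an interior maximum of $u$, $i\partial\overline{\partial} u \leq 0$ forces $\omega^n \leq \omega_h^n$ pointwise, and then \eqref{1} gives $u \leq 0$ there; the lower bound is symmetric. Combined with the Dirichlet data $u|_{\partial U} = \beta$, this yields $\|u\|_{L^\infty(U \setminus D)} \leq C(\beta)$, provided the extrema are actually attained, which is where completeness will enter (see below).

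Next, I would compare $u$ with the barrier $\phi = A/\rho$ for a large constant $A = A(\beta)$. By Lemma \ref{l:busemann}, $|\nabla \rho|_{\omega_h} \equiv 1$ and the integral curves of $\nabla\rho$ are geodesics; moreover $\Delta_{\omega_h} \rho$ equals a positive constant equal to the negative of the mean curvature of the level sets. A direct computation then gives $\Delta_{\omega_h} \phi = -c A/\rho^2 + O(1/\rho^3)$ with $c > 0$, while $|i\partial\overline{\partial}\phi|_{\omega_h} = O(A/\rho^2)$. If $v := u - \phi$ has a positive interior maximum at a point $p$ where $\rho(p)$ is large enough that $\omega_h + i\partial\overline{\partial}\phi > 0$, then $u_{j\bar k}(p) \leq \phi_{j\bar k}(p)$, and the Monge-Amp\`ere comparison principle gives
\[e^{u(p)} \leq \frac{\det(g_{j\bar k} + \phi_{j\bar k})}{\det g_{j\bar k}}(p) = 1 + \Delta_{\omega_h} \phi(p) + O\bigl(|i\partial\overline{\partial}\phi|^2_{\omega_h}\bigr) < 1,\]
so $u(p) < 0$, contradicting $u(p) > \phi(p) > 0$. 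Any positive maximum of $v$ must therefore lie in the bounded region where the preceding argument fails, and enlarging $A$ pushes $v \leq 0$ throughout that region and on $\partial U$ via the $L^\infty$ bound established above. This gives $u \leq A/\rho$; the symmetric argument with $-\phi$ yields $-u \leq A/\rho$. Since $\rho$ is a Busemann function with $\rho(p) = \rho|_{\partial U} + d_{\omega_h}(p, \partial U)$ by Lemma \ref{l:busemann}, we conclude $|u(p)| \leq C/d_{\omega_h}(p, \partial U)$, which matches the desired $O(\eta)$ bound under the distance hypothesis.

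The main obstacle is attainment of the interior extrema on the noncompact manifold $U \setminus D$. To force $\sup v$ to be attained I would subtract a proper exhaustion function $\varepsilon \psi$ and send $\varepsilon \to 0$. One can take $\psi = \rho$, using completeness of $\omega_h$ to see that $\psi \to \infty$ at the cusp, or $\psi = d_\omega(\cdot, \partial U)$, using completeness of $\omega$; the flexibility to use either exhaustion is reflected in the theorem's hypothesis that both distances from $p$ to $\partial U$ be large. The analytic point is to uniformly bound $|i\partial\overline{\partial}(\varepsilon \psi)|_{\omega_h}$ so that the Monge-Amp\`ere comparison from the previous paragraph survives the $\varepsilon \to 0$ limit; for $\psi = \rho$ this follows from explicit calculation with the model metric, and for $\psi = d_\omega(\cdot, \partial U)$ one uses a Calabi-type device to replace the non-smooth distance by a smooth proxy with controlled complex Hessian.
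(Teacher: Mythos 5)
The paper itself offers no proof of this statement---it is quoted as a special case of \cite[Thm 1.3]{DFS}---so your argument has to stand on its own, and it has two problems. The first is a sign error that is repairable. By Lemma \ref{l:busemann}, the mean curvature of $Y_\epsilon$ with respect to the cusp-pointing unit normal $\mathbf{n}=\nabla\rho$ is $-n/\sqrt{2(n+1)}$: the cross-sections \emph{collapse} as $\rho$ increases, so $\Delta_{\omega_h}\rho=-n/\sqrt{2(n+1)}<0$, not a positive constant. Hence $\Delta_{\omega_h}(A/\rho)=\tfrac{n}{\sqrt{2(n+1)}}A\rho^{-2}+2A\rho^{-3}>0$, and your displayed chain ``$e^{u(p)}\leq 1+\Delta_{\omega_h}\phi(p)+O(|\ddbar\phi|^2)<1$'' is false. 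The contradiction must instead be extracted from the zeroth-order term of $L_h$: the comparison gives $u(p)\leq\log(1+O(A\rho^{-2}))=O(A\rho^{-2})$, which contradicts $u(p)>\phi(p)=A/\rho$ once $\rho$ is large. In other words, $A/\rho$ is a supersolution because $-\phi$ dominates $\Delta_{\omega_h}\phi$, not because $\Delta_{\omega_h}\phi<0$.

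The genuine gap is in the attainment step, which is exactly where completeness of $\omega$ has to enter, and it cannot be patched by either of your two exhaustions. With $\psi=\rho$, subtracting $\epsilon\psi$ forces the supremum of $\mp u-\phi-\epsilon\psi$ to be attained only if $\mp u=o(\rho)$ a priori, which you do not have: Example \ref{ex:conical} produces incomplete solutions of \eqref{1} on $V\setminus D$ with $-u$ growing like $e^{c\rho}$, for which the conclusion of the theorem fails, so no argument using only the $\omega_h$-geometry can close the lower bound. With $\psi$ built from $d_\omega(\cdot,\partial U)$, properness is fine, but your determinant comparison is carried out entirely in the $\omega_h$-frame and therefore needs $|\ddbar\psi|_{\omega_h}$ uniformly bounded; the Calabi device together with Laplacian comparison (using ${\rm Ric}(\omega)=-\omega$ and completeness) controls only $\Delta_\omega\psi$ from above, and converting $\omega$-control into $\omega_h$-control presupposes precisely the comparability of $\omega$ and $\omega_h$ that the theorem is meant to establish. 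The standard repair decouples the two one-sided bounds: from \eqref{1} and the arithmetic--geometric mean inequality, $\Delta_{\omega_h}u=\mathrm{tr}_{\omega_h}\omega-n\geq n(e^{u/n}-1)$ and $\Delta_{\omega}(-u)=\mathrm{tr}_{\omega}\omega_h-n\geq n(e^{-u/n}-1)$, so $u$ is a subsolution of $\Delta_{\omega_h}-1$ and $-u$ is a subsolution of $\Delta_{\omega}-1$. The upper bound for $u$ is then run on the relatively compact balls $B^{\omega_h}_{1/\eta}(p)$ with the explicit $\rho$-barriers you describe, and the lower bound on the balls $B^{\omega}_{1/\eta}(p)$ (relatively compact by completeness of $\omega$) with barriers in $d_\omega$ controlled by Laplacian comparison; working on balls also removes the need for a global exhaustion. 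This is why the hypothesis requires \emph{both} ${\rm dist}_{\omega_h}(p,\partial U)\geq\eta^{-1}$ and ${\rm dist}_{\omega}(p,\partial U)\geq\eta^{-1}$: it is not flexibility in the choice of exhaustion, but each condition feeds exactly one of the two one-sided estimates.
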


In particular, the maximum principle can now be applied to prove that $u$ is the \emph{unique} solution to the Dirichlet problem \eqref{eq:dirprob} such that $\omega_h + \ddbar u$ is complete, where $\beta$ is defined to be the restriction of $u$ to $\partial V$ for any $V$ as above such that $V \subset U$. This completeness assumption cannot be dropped, and \eqref{1} does not guarantee by itself that $u$ is bounded, as the following example shows.

\begin{example}\label{ex:conical}
We use the Calabi ansatz to construct K\"ahler-Einstein metrics on $V \setminus D = \{0 < h \leq \delta\}$ for some $\delta>0$. Let $\omega=\ddbar \psi(-\sigma)$, where $\psi: (-\infty, \log \delta] \to \R$ is smooth. Then $\omega$ is positive definite if and only if $\psi' > 0$ and $\psi'' > 0$, and the equation ${\rm Ric}(\omega) = -\omega$ is equivalent to
\begin{align}\label{eq-calabiODE}
(\psi')^{n-1}\psi'' = e^{\psi + a}
\end{align}
for some constant $a \in \R$. Our standard solutions $\omega = \omega_{\lambda h}$ ($\lambda \in \R^+$) in this paper correspond to 
\begin{align}\label{eq-standardsolns}
\psi(t) = -(n+1)\log(-(t+\log\lambda)), \;\, e^a = (n+1)^n,
\end{align}
for $\delta \in (0, \frac{1}{\lambda})$. However, there do exist other solutions which are defined on all of $V \setminus D$.

In fact, by multiplying \eqref{eq-calabiODE} by $\psi' > 0$ and integrating, we see that \eqref{eq-calabiODE} is equivalent to
\begin{align}\label{eq-modified}
\frac{1}{n+1}(\psi')^{n+1} = e^{\psi + a} + b
\end{align}
for some constant $b \in \R$. This is in turn equivalent to
\begin{align}\label{eq-integrated}
\int_{\psi(t)}^{\psi(t_0)} (e^{\xi + a} + b)^{-\frac{1}{n+1}}d\xi = (n+1)^{\frac{1}{n+1}}(t_0 - t)
\end{align}
for all $t \leq t_0$ such that $\psi$ exists on $[t,t_0]$ and $e^{\psi + a} + b$ is strictly positive. If $b = 0$, then \eqref{eq-integrated} implies \eqref{eq-standardsolns} up to an additive constant. For $b < 0$, the integral in \eqref{eq-integrated} is uniformly bounded independently of $\psi(t)$, so the interval of allowed $t$-values has a finite lower bound. For $b > 0$, this obstruction does not arise, so $\psi$ exists on $(-\infty,\log\delta]$ for some $\delta > 0$ and $\psi'$ $>$ $c := ((n+1)b)^{1/(n+1)} > 0$ and $\psi'' > 0$.

The property $\psi' > c$ together with the mean value theorem tells us that $\psi(t) \leq \psi(t_0) + c(t - t_0)$ for all $t \leq t_0$. Then \eqref{eq-calabiODE} implies $\psi''(t) = O(e^{ct})$ and \eqref{eq-modified} implies $\psi'(t) - c = O(e^{c t})$ as $t \to -\infty$. Using this, one can check that $(V\setminus D, \omega)$ has finite diameter. In fact, $\omega$ defines a singular metric on $V$ with conical singularities along $D$, where $D$ has diameter proportional to $\sqrt{c}$ and the cone angle is $2\pi c$. To see that the angle is exactly $2\pi c$, notice that $\psi'(t) = c + O(e^{ct})$ implies $\psi(t) = ct + d + O(e^{ct})$ for some constant $d \in \R$, and then $\psi''(t)$ is asymptotic to a constant multiple of $e^{ct}$ by \eqref{eq-calabiODE}.
\end{example}

\begin{corollary}\label{cor:clarify-DFS}
If $\omega$ is complete and if in addition $\sup_{U \setminus D} |\omega|_{\omega_h} < \infty$, then $\omega$ is uniformly equivalent to $\omega_h$ and we have that $|u| = O((-{\log x})^{-1})$ as $x \to 0^+$. 
\end{corollary}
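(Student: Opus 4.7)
The plan is a two-stage bootstrap: first upgrade the \emph{a priori} completeness plus the bounded $\omega_h$-norm of $\omega$ to a uniform equivalence of the two metrics by bounding $u$ and invoking the Monge-Amp\`ere equation; then feed the uniform equivalence back into Theorem \ref{thm:DFS0} to turn the plain bound $|u|\leq C$ into the logarithmic decay $|u|=O((-\log x)^{-1})$.

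For the first stage, I would start by applying Theorem \ref{thm:DFS0} with $\eta=1$: this gives $|u(p)|\leq C$ at all $p\in U\setminus D$ with ${\rm dist}_{\omega_h}(p,\partial U)\geq 1$ and ${\rm dist}_\omega(p,\partial U)\geq 1$. Because $\omega_h$ is complete at $D$ and $\omega$ is complete by hypothesis, the complementary set is a relatively compact collar of $\partial U$ in $U\setminus D$; on it $u$ is smooth, hence bounded by compactness. So $|u|$ is globally bounded on $U\setminus D$. Now let $\lambda_1,\dots,\lambda_n$ be the eigenvalues of $\omega$ relative to $\omega_h$. The Monge-Amp\`ere equation \eqref{1} gives $\prod_j\lambda_j=e^u\in[e^{-C},e^C]$, and the hypothesis $\sup_{U\setminus D}|\omega|_{\omega_h}<\infty$ gives $\lambda_j\leq\Lambda$ for some $\Lambda>0$. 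Hence
\begin{equation*}
\lambda_j \;=\;\frac{\prod_i\lambda_i}{\prod_{i\neq j}\lambda_i}\;\geq\;\frac{e^{-C}}{\Lambda^{n-1}}\;=:\;c\;>\;0,
\end{equation*}
which yields $c\,\omega_h\leq\omega\leq\Lambda\,\omega_h$, i.e.\ the desired uniform equivalence.

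For the second stage, uniform equivalence forces ${\rm dist}_\omega$ and ${\rm dist}_{\omega_h}$ to be comparable on $U\setminus D$. Combined with the Busemann character of $\rho=-\sqrt{(n+1)/2}\log x$ (Lemma~\ref{l:busemann}) and the fact that $U$ is sandwiched between two level sets $\{h=\delta_i\}$, this gives a constant $c_0>0$ such that both ${\rm dist}_{\omega_h}(p,\partial U)$ and ${\rm dist}_\omega(p,\partial U)$ are at least $c_0(-\log x(p))$ once $x(p)$ is small enough. Setting $\eta(p)=1/(c_0(-\log x(p)))$ and applying Theorem \ref{thm:DFS0} at this $p$-dependent $\eta$ yields $|u(p)|\leq C\eta(p)=O((-\log x(p))^{-1})$ as $x\to 0^+$; on the complementary compact region $u$ is bounded smoothly, so this bound holds globally.

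No serious obstacle arises here: the only place requiring care is justifying that ${\rm dist}_{\omega_h}(p,\partial U)\sim -\log x(p)$, which follows from the Busemann property together with the sandwich $\{h\leq\delta'\}\subset U\subset\{h\leq\delta\}$; and the eigenvalue-product argument in stage one needs the qualitative lower bound on $u$ from Theorem \ref{thm:DFS0}, which is precisely what the DFS completeness result supplies. Once those two ingredients are in place, the whole argument is essentially a two-line bootstrap.
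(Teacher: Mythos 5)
Your proposal is correct and follows essentially the same route as the paper's proof: Theorem \ref{thm:DFS0} gives boundedness of $u$, which combined with the upper bound $\sup|\omega|_{\omega_h}<\infty$ and the volume-form identity yields uniform equivalence, and then Theorem \ref{thm:DFS0} is reapplied with $\eta$ comparable to $(-\log x(p))^{-1}$ using the Busemann property of $\rho$. The only difference is that you spell out the eigenvalue-product step and the compact-collar argument, which the paper leaves implicit.
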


\begin{proof}
By Theorem \ref{thm:DFS0}, $u$ is bounded, i.e., the volume forms $\omega_h^n$ and $\omega^n$ are uniformly comparable to each other. Since $|\omega|_{\omega_h}$ is bounded, it follows that $\omega$ and $\omega_h$ are themselves uniformly equivalent. Then $|u(p)| = O((-{\log x(p)})^{-1})$ follows for all $p \in U \setminus D$ by taking $\eta$ in Theorem \ref{thm:DFS0} to be comparable to $(-{\log x(p)})^{-1}$ because the gradient of $-{\log x(\cdot)}$ has constant $\omega_h$-length by Lemma \ref{l:busemann}.
\end{proof}

If $D$ is a finite quotient of a complex torus,  then the bound $\sup_{U \setminus D} |\omega|_{\omega_h} < \infty$ can actually be proved \cite{DFS,Koba}, which is also the reason why the Dirichlet problem \eqref{eq:dirprob} can be proved to admit a complete solution in this case \cite{DFS}. Applying the standard local regularity theory of the complex Monge-Amp\`ere equation in quasi-coordinates \cite{CY}, we thus obtain the following slight refinement of \cite[Thm 1.4]{DFS}.

\begin{theorem}\label{thm-DFS}
If $D$ is a finite quotient of a torus and if $\omega$ is complete, then $\omega$ is uniformly equivalent to $\omega_h$ and we have for all $k \in \N_0$ that
$|\nabla^k_{\omega_h}u|_{\omega_h} = O_k((-{\log x})^{-1})$ as $x\to 0^+$.
\end{theorem}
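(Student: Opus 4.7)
The plan is to combine Corollary \ref{cor:clarify-DFS} with standard local Monge-Amp\`ere regularity in quasi-coordinates. The corollary already yields the uniform equivalence of $\omega$ and $\omega_h$ together with the $C^0$ decay $|u|=O((-{\log x})^{-1})$, but only conditional on the $L^\infty$ bound $\sup_{U\setminus D}|\omega|_{\omega_h}<\infty$. Thus the task splits into (i) establishing this $C^2$ bound for $u$, and (ii) bootstrapping $C^0$ decay to decay in every covariant derivative.

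First I would establish the $C^2$ bound by the classical Yau-Aubin second-order estimate. Because $D$ is a finite quotient of a flat torus, $\omega_h$ has constant holomorphic sectional curvature $-\tfrac{2}{n+1}$ and hence bounded bisectional curvature. Applying the Chern-Lu inequality to $\log{\rm tr}_{\omega_h}\omega$, using ${\rm Ric}(\omega)=-\omega$, and forming the auxiliary function $\log{\rm tr}_{\omega_h}\omega-Au$ with $A$ depending only on the curvature of $\omega_h$, one obtains a differential inequality of the shape $\Delta_\omega(\log{\rm tr}_{\omega_h}\omega-Au)\geq{\rm tr}_{\omega_h}\omega-C$. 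Since $\omega$ is complete and has Ricci curvature bounded below, either the Omori-Yau maximum principle or the localization technique of \cite{Koba,DFS} applies, giving ${\rm tr}_{\omega_h}\omega\leq C$ globally. Corollary \ref{cor:clarify-DFS} then delivers the uniform equivalence and the $C^0$ decay $|u|=O((-{\log x})^{-1})$.

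For the higher-derivative decay I would work in quasi-coordinates at unit $\omega_h$-scale, in the spirit of \cite{CY,CY2}. For any $p\in U\setminus D$ with $x(p)$ small, a local universal cover of the $\omega_h$-ball of radius $1$ around $p$ is biholomorphic to a fixed domain in $\C^n$ on which $\omega_h$ pulls back to a K\"ahler metric whose coefficients and all their derivatives are uniformly bounded; this is immediate from the torus structure of $D$ and the explicit formula \eqref{eq-gij}. On such a quasi-chart, the equation $(\omega_h+i\partial\overline\partial u)^n=e^u\omega_h^n$ is a uniformly elliptic concave fully nonlinear equation thanks to Step 1, and Theorem \ref{thm:DFS0} together with the Busemann property of $\rho$ from Lemma \ref{l:busemann} gives $\|u\|_{C^0(B_1(p))}\leq C/(-{\log x(p)})$. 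Rescaling $v:=u/\|u\|_{C^0(B_1(p))}$ produces a uniformly bounded solution of a one-parameter perturbation of the linear equation $L_h v=v$ in which the nonlinear remainder is of size $O(\|u\|_{C^0})$. Evans-Krylov followed by Schauder bootstrap then produces uniform $C^{k,\alpha}$ bounds on $v$ for every $k$, and unwinding yields $|\nabla^k_{\omega_h}u|_{\omega_h}(p)\leq C_k/(-{\log x(p)})$.

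The main obstacle is Step 1: the $C^2$ estimate relies essentially on the bounded bisectional curvature of $\omega_h$, which in turn is only guaranteed here by the flat torus hypothesis on $D$. This is precisely the reason the torus hypothesis is retained throughout the paper, as the authors already flag at the start of Section \ref{sec:bg}. Once the $C^2$ bound is available, Step 2 is by now a routine Cheng-Yau quasi-coordinate bootstrap.
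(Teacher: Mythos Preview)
Your proposal is correct and follows essentially the same route the paper indicates. The paper does not give a detailed proof of this theorem: it simply observes (in the paragraph immediately preceding the statement) that the bound $\sup_{U\setminus D}|\omega|_{\omega_h}<\infty$ is available from \cite{DFS,Koba} precisely because $D$ is a torus quotient and hence $\omega_h$ has bounded curvature, then invokes Corollary~\ref{cor:clarify-DFS} for the uniform equivalence and $C^0$ decay, and finally cites the standard Monge-Amp\`ere regularity in Cheng-Yau quasi-coordinates \cite{CY} for the higher derivatives. Your Steps~(i) and~(ii) flesh out exactly these two ingredients, and your identification of the curvature bound on $\omega_h$ as the point where the torus hypothesis enters is on target.
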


\section{Expansion according to powers of $x$}\label{sec:expansion-powers}

Assume the same setup as in the Main Theorem. Since $\omega_h$ and $\omega$ are K\"ahler-Einstein with the same Einstein constant, it follows that $\omega = \omega_h + i\partial\overline{\partial} u$, where $u$ is the log volume ratio. Since $\omega$ is complete, it follows from Theorem  \ref{thm-DFS} that $|u| = O((-{\log x})^{-1})$ with all derivatives. Also, by definition,
\begin{align}\label{eq-main}
(\omega_h + i\partial \overline{\partial} u)^n = e^{u} \omega_h^n.
\end{align}
In this section we prove the following theorem, which is our first step towards the Main Theorem.

\begin{theorem}\label{thm:expansion-powers}
There exists a constant $c \in \R$ such that 
\begin{align}\label{eq:expansion-powers}
u + (n+1)\log(1 + cx) = {O}(x^\infty).
\end{align}
\end{theorem}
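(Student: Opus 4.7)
I would write \eqref{eq-main} as $L_h(u) = N(u)$, where $L_h$ is the operator from Lemma \ref{lem:Lu} and $N$ collects the nonlinear remainder (vanishing at least quadratically in $(u, \partial u, \partial^2 u)$). The plan is to iteratively construct constants $a_1, a_2, \ldots \in \R$ so that
\[
u - \sum_{j=1}^{k} a_j\, x^j = O(x^{k+1-\epsilon})\;\;\text{to all orders w.r.t.~$\omega_h$},
\]
for every $\epsilon > 0$ and every $k \in \N$, starting from the baseline bound $u = O((-\log x)^{-1})$ of Theorem \ref{thm-DFS}. I would then identify the formal series $\sum a_j x^j$ with the Taylor series of $-(n+1)\log(1+cx)$ and deduce \eqref{eq:expansion-powers}.

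\textbf{Iteration and the key inversion lemma.} By Lemma \ref{lem:Lu}, when $f$ depends only on $x$ one has $L_h(f) = \frac{1}{n+1}(x^2 f'' + (n+1) xf' - (n+1)f)$, hence $L_h(x^k) = \frac{(k-1)(k+n+1)}{n+1} x^k$, which is nonzero for every integer $k \geq 2$. Given $u = U_k + v_k$ with $U_k = \sum_{j=1}^{k} a_j x^j$ and $v_k = O(x^{k+1-\epsilon})$ to all orders, the equation reads $L_h(v_k) = N(U_k + v_k) - L_h(U_k)$. Expanding both sides in powers of $x$ --- using that $N$ is quadratic in small quantities and that every summand of $L_h$ carrying $\varphi$- or $\theta$-derivatives annihilates functions of $x$ alone --- yields $L_h(v_k) = B_{k+1} x^{k+1} + O(x^{k+2-\epsilon})$ for an explicit constant $B_{k+1}$. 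Setting $a_{k+1} := \frac{(n+1) B_{k+1}}{k(k+n+2)}$ makes the residual satisfy $L_h(v_k - a_{k+1} x^{k+1}) = O(x^{k+2-\epsilon})$, and the induction continues via the following \emph{asymptotic inversion lemma}: for $\ell \geq 2$, if $w = O(x^{\ell-\epsilon})$ to all orders and $L_h(w) = O(x^{\ell+1-\epsilon})$ to all orders, then in fact $w = O(x^{\ell+1-\epsilon})$ to all orders.

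\textbf{Main obstacle and closing step.} Proving this inversion lemma is the most delicate point. I would split $w$ into its average $\bar w(x) := \mathrm{Vol}(Y_{\sqrt{x}})^{-1}\int_{Y_{\sqrt{x}}} w\, dA$, which depends only on $x$, and its zero-mean complement $\tilde w$. The terms $-\frac{1}{(n+1)x}\varphi^{\bar\beta\alpha}\partial_\alpha\partial_{\bar\beta}$ (with spectral gap $\lambda_1$ on zero-mean functions of the torus $D$) and $\frac{1}{4(n+1)x^2}\partial_\theta^2$ (with integer $\theta$-frequencies) dominate $L_h$ at small $x$ on $\tilde w$, forcing $\tilde w$ to decay faster than any power of $x$ --- the same mechanism that ultimately produces the double-exponential rate of the Main Theorem. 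On the radial part $\bar w$, $L_h$ reduces to the Euler ODE above with indicial roots $1$ and $-(n+1)$; from $\bar w = O(x^{\ell-\epsilon})$ with $\ell\geq 2$ and $L_h(\bar w) = O(x^{\ell+1-\epsilon})$, variation of parameters immediately yields $\bar w = O(x^{\ell+1-\epsilon})$. Promoting these $C^0$ estimates to all covariant derivatives in $\omega_h$ uses Schauder theory at $\omega_h$-scale $1$, accessible through the quasi-coordinate charts of \cite{CY2} since $D$ is a torus. Finally, $\omega_h + i\partial\overline\partial(-(n+1)\log(1+cx)) = \omega_{e^{-c}h}$ is K\"ahler--Einstein with the same Einstein constant as $\omega_h$, so $-(n+1)\log(1+cx)$ solves \eqref{eq-main} exactly. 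Its Taylor expansion at $x=0$ is therefore a formal solution of $L_h(U)=N(U)$, and by uniqueness of the recursion once $a_1$ is fixed, choosing $c = -a_1/(n+1)$ makes the two formal series agree term-by-term, giving \eqref{eq:expansion-powers}.
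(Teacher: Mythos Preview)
Your strategy is essentially the paper's: split off a radial Euler ODE (indicial roots $1$ and $-(n+1)$), show the non-radial part decays faster than any power of $x$, iterate to produce a formal series $\sum a_j x^j$, and identify it with $-(n+1)\log(1+cx)$ by observing that $\omega_h + i\partial\overline\partial(-(n+1)\log(1+cx)) = \omega_{e^{-c}h}$ solves the equation exactly and that the recursion determines all coefficients from $a_1$. All of this is correct and matches the paper's Section~\ref{ss:proof-3.1}.

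There is, however, one genuine gap. Your inversion lemma is stated for $\ell\geq 2$ and your recursion $a_{k+1}=(n+1)B_{k+1}/(k(k+n+2))$ only makes sense for $k\geq 1$; both presuppose that you already have a \emph{polynomial} a-priori rate. But you start from the Datar--Fu--Song bound $u=O((-\log x)^{-1})$, which is sub-polynomial. From this one only gets $N(u)=O((-\log x)^{-2})$, and variation of parameters on the radial ODE then returns $\bar u = c_1 x + O((-\log x)^{-2})$: you gain a power of $(-\log x)^{-1}$, never a power of $x$, so no finite number of such steps reaches $u=O(x^{1-\epsilon})$. The paper closes this gap by a separate two-stage maximum-principle barrier argument (Lemma~\ref{lem-u-c0}), using first $x^{1-\epsilon}$ and then $x-x^{1+\epsilon}$ as supersolutions of $L_h$ to pass directly from $u=o(1)$ to $|u|=O(x)$. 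You need either this or an equivalent substitute before your iteration can start.

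A secondary point: your one-line claim that the spectral gap ``forces $\tilde w$ to decay faster than any power of $x$'' is the right heuristic, but in the paper this is the content of Lemmas~\ref{lem-v_higher}--\ref{lem-infinity} (Schauder in quasi-coordinates to get $|\nabla^k u|=O(x)$, an energy-decay argument on $\mathcal L_{\partial_\theta}u$ for the $S^1$-direction, then an interpolation/averaging step for the $D$-directions). It is not automatic from the form of $L_h$ alone because the equation is nonlinear and one must first control $\ddbar u$; once $|u|=O(x)$ is in hand these lemmas go through, which again points back to the missing barrier step.
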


The proof will be given in Section \ref{ss:proof-3.1} after some preliminary estimates in Section \ref{ss:prelim-ests}.

Note that if \eqref{eq:expansion-powers} holds in the pointwise sense, then it automatically holds for all derivatives. This follows from the classical regularity theory of the Monge-Amp\`ere equation. For this it actually suffices to work at the scale of the  injectivity radius because the constants in the elliptic estimates only blow up like fixed negative powers of $x$ even at this scale. However, during the proof of Theorem \ref{thm:expansion-powers}, it is helpful to work with a more specific $O$ notation, which we will now define; cf.~\cite[Thm 1.1]{JiangShi}.

\begin{definition}\label{defn-boldO}
After shrinking $U$ if necessary, choose a finite cover of $U \setminus D$ by charts $(x_\alpha,y_\alpha, x, \theta)$ as in Sections \ref{ss:modelmetric}--\ref{ss:modelop}. Given this, we say that a function $f \in C^\infty(U \setminus D)$ is $\mathbf{O}(x^k)$ for some $k \in \R$ if, under all local charts of this cover, for all $c\in \mathbb{N}_0$ there exists a constant $C$ such that
\begin{align}\label{eq:fatO1}
|x^c\partial^c_x f|\leq Cx^{k},
\end{align}
and for all $a,b,c,d, N \in \N_{0}$ with $a + b + d \geq 1$ there exists a constant $C'$ such that
\begin{align}\label{eq:fatO2}
|\partial^a_{x^\alpha} \partial^b_{y^\alpha} \partial^c_x \partial^d_\theta f|\leq C'x^{N}.
\end{align}
\end{definition}

\subsection{Preliminary estimates}\label{ss:prelim-ests} 

We begin by proving that $|u| = O(x)$ as $x \to 0^+$. This bound is sharp because $x$ is the unique decaying radial solution to the homogeneous linearized PDE, as one can easily check using Lemma \ref{lem:Lu}. Our proof uses a barrier argument as in \cite[Lemma 2.2]{JiangShi}, and this argument would go through for an arbitrary compact Calabi-Yau manifold $D$.

\begin{lemma}\label{lem-u-c0}
Assuming only that $|u| = o(1)$ as $x \to 0^+$, we obtain that $|u|= O(x)$.
\end{lemma}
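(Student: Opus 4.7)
The plan is to use the explicit one-parameter family of exact Kähler-Einstein potentials
\[ u_c := -(n+1)\log(1+cx), \qquad c \in \R, \]
(defined wherever $1+cx>0$) as barriers, and then compare them with $u$ via the maximum principle. The identity $\omega_h + \ddbar u_c = -(n+1)i\partial\overline{\partial}\log(\sigma+c) = \omega_{\lambda h}$ with $\lambda = e^{-c}$, together with a direct determinant computation using Lemma~\ref{lem:totallybasic} (giving $\omega_{\lambda h}^n/\omega_h^n = (x_\lambda/x)^{n+1} = (1+cx)^{-(n+1)} = e^{u_c}$), shows that each $u_c$ is itself an honest solution of \eqref{eq-main}.

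Since both $u$ and $u_c$ solve \eqref{eq-main}, writing the difference of log-determinants as an integral along the path of Kähler metrics $g_t := g_h + \ddbar u_c + t\,\ddbar(u-u_c)$ for $t\in[0,1]$ (which remain positive definite by convexity of the space of Hermitian positive matrices), one sees that $w_c := u - u_c$ satisfies a linear elliptic equation
\[ \mathcal{L}_c w_c := \Bigl(\int_0^1 g_t^{\bar k j}\,dt\Bigr)(w_c)_{j\bar k} - w_c = 0 \]
whose zeroth-order coefficient, $-1$, is strictly negative, so the weak maximum and minimum principles apply on any compact subdomain of $U \setminus D$.

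For the upper bound on $u$, let $\partial U = \{x = x_0\}$ (shrinking $U$ if needed) and pick $c \in (-1/x_0, 0)$ close enough to $-1/x_0$ that $u_c|_{\partial U} = -(n+1)\log(1+cx_0)$ exceeds the finite constant $\sup_{\partial U} u$. On the compact cylinder $\{\delta \leq x \leq x_0\}$ for small $\delta > 0$, we have $w_c \leq 0$ on $\{x = x_0\}$ by construction, while on the inner boundary $\{x = \delta\}$ we have $|w_c| \leq |u| + |u_c| \to 0$ as $\delta \to 0^+$, by the standing hypothesis $|u| = o(1)$ together with the trivial bound $|u_c| = O(x)$. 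The minimum principle then forces $w_c \leq \epsilon$ on $\{\delta \leq x \leq x_0\}$ for every $\epsilon > 0$ by taking $\delta$ small, so $w_c \leq 0$ throughout $U \setminus D$. Using the elementary bound $-\log(1+y) \leq -y/(1+y_0)$ for $y \in [y_0, 0]$ and $y_0 \in (-1,0)$, we conclude
\[ u \leq u_c = -(n+1)\log(1+cx) \leq \frac{(n+1)|c|}{1+cx_0}\,x = O(x). \]
A symmetric argument with $c > 0$ chosen large enough that $u_c|_{\partial U} \leq \inf_{\partial U} u$, combined with $-\log(1+cx) \geq -cx$, yields $u \geq u_c \geq -(n+1)cx = O(x)$, and together these give $|u| = O(x)$.

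The only real obstacle is handling the maximum principle on the non-compact domain near the cusp; this is precisely where the assumption $|u| = o(1)$ is used, as it trivializes the inner boundary condition on $\{x = \delta\}$ in the limit $\delta \to 0^+$, thereby letting the outer barrier propagate to all of $U \setminus D$.
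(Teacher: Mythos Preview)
Your argument is correct and takes a genuinely different route from the paper's proof. The paper proceeds in two steps with \emph{approximate} barriers: first $B = x^{1-\epsilon}$ (a strict supersolution of $L_h$) to get $|u| = O(x^{1-\epsilon})$ for all $\epsilon \in (0,1)$, and then $B = x - x^{1+\epsilon}$ to reach $|u| = O(x)$; in each step the nonlinearity $Q_h$ is controlled quadratically via the smallness of $a_\delta B$, which is why the two-step bootstrap is needed.

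Your approach instead exploits the explicit one-parameter family of \emph{exact} solutions $u_c = -(n+1)\log(1+cx)$ (coming from the scaling $h \mapsto e^{-c}h$, as the paper itself uses in Section~\ref{sec:exp-decay}). Because both $u$ and $u_c$ are honest solutions, the difference $w_c$ satisfies a genuinely linear equation $\bigl(\int_0^1 g_t^{\bar k j}\,dt\bigr)(w_c)_{j\bar k} = w_c$ with no quadratic error terms, and the maximum principle on the compact cylinders $\{\delta \leq x \leq x_0\}$ yields the result in one pass; the hypothesis $|u| = o(1)$ kills the inner boundary exactly as you say. This is more economical here. The paper's method, by contrast, does not depend on knowing any exact solutions and stays within the general barrier-plus-linearization framework of \cite{JiangShi}, so it would transfer to settings where no such explicit family exists.

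Two cosmetic remarks: in your upper-bound paragraph you write ``minimum principle'' where you mean the maximum principle for $w_c$; and it is worth making explicit (you do assume it) that $c > -1/x_0$ is precisely what guarantees $g_0 = \omega_{e^{-c}h} > 0$ on $\{x \leq x_0\}$, so that the convex combinations $g_t$ are positive definite and the integrated coefficient matrix is elliptic.
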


\begin{proof} 
Define $U_\delta = \{0 < x \leq \delta\} \subset L \setminus D$ with $\delta \in (0,1)$ small enough so that $U_\delta \subset U\setminus D$. We will prove the lemma in two steps, using a suitable barrier function in each step. The key is that $x^{1-\epsilon}$ is a supersolution of the linearized operator $L_h$ for all $\epsilon \in (0,1)$ (in fact, for all $\epsilon \in (0,n+2)$, but we are only interested in functions that go to zero as $x \to 0^+$) and $x^{1+\epsilon}$ is a subsolution for all $\epsilon>0$.

In the first step, we consider the barrier $B := x^{1-\epsilon}$ for an arbitrary but fixed $\epsilon \in (0,1)$. Let
\begin{align}\label{eq:bdry}
a_\delta := \max_{\partial U_\delta} \frac{|u|}{B} = o(\delta^{\epsilon-1})\;\,\text{as}\;\,\delta\to 0.
\end{align}
Then $u - a_\delta B \leq 0$ on $\partial U_\delta$. We now prove that $u - a_\delta B \leq 0$ on $U_\delta$ if $\delta$ is small. If this was false, then, because $u = o(1)$ as $x \to 0^+$, for arbitrarily small values of $\delta$ there would exist a point $p_\delta \in {\rm int}\,U_\delta$ such that $u-a_\delta B$ attains a positive global maximum at $p_\delta$. Thus, at $p_\delta$,
\begin{align}\label{eq:barr1}
0 = \log\left(\frac{(\omega_h + \ddbar u)^n}{\omega_h^n}\right) - u < \log\left(\frac{(\omega_h + \ddbar (a_\delta B))^n}{\omega_h^n}\right) - a_\delta B.
\end{align}
Now $|\ddbar B|_{\omega_h} = c B$ for some constant $c = c(n,\epsilon)$, so by \eqref{eq:bdry} we have $|\ddbar(a_\delta B)|_{\omega_h} = o(\delta^{\epsilon-1}) B = o(1)$ uniformly on $U_\delta$ as $\delta \to 0$. This allows us to linearize the right-hand side of \eqref{eq:barr1} if $\delta$ is small enough. Thus, there exists a constant $C = C(n)$ such that if $\delta$ is small, then, uniformly on $U_\delta$, 
\begin{align}\label{eq:barr2}
\begin{split}
\log\left(\frac{(\omega_h + \ddbar (a_\delta B))^n}{\omega_h^n}\right) - a_\delta B &\leq L_h(a_\delta B) + C|\ddbar(a_\delta B)|_{\omega_h}^2\\
&= -\epsilon(n+2-\epsilon)(a_\delta B) + c^2C (a_\delta B)^2.
\end{split}
\end{align}
Again because $\sup_{U_\delta} (a_\delta B) = o(1)$ as $\delta \to 0$, this is eventually nonpositive on all of $U_\delta$, contradicting \eqref{eq:barr1} at $p_\delta$. Thus, fixing $\delta$ small enough depending on $u,n,\epsilon$, we have that $u \leq a_\delta x^{1-\epsilon} $ on $U_\delta$.

A lower bound can be proved in the same way. The only difference is that the uniform smallness of $a_\delta B$ on $U_\delta$ is then also needed to ensure that $\omega_h - \ddbar(a_\delta B) > 0$. Notice also that, in the proof of the upper bound, the quadratic terms on the right-hand side of \eqref{eq:barr2} were actually unnecessary because the Monge-Amp\`ere operator is concave, but they are necessary in the proof of the lower bound.

In the second step, we carry out the same argument using $B := x - x^{1+\epsilon}$ as a barrier for any fixed $\epsilon \in (0,1)$. This is a positive supersolution to $L_h$ on $U_\delta$ for all $\epsilon>0$ because $x$ is a solution and $x^{1+\epsilon}$ is a subsolution. The only difference to the first step is that the good linear term on the right-hand side of \eqref{eq:barr2} is now comparable to $x^\epsilon (a_\delta B)$, but the nonlinearities are still only bounded by $(a_\delta B)^2$. Thus, we can complete the argument only if we know that $\sup_{U_\delta} (x^{-\epsilon}(a_\delta B)) = o(1)$ as $\delta\to 0$. But this is true as long as $\epsilon<1$. The reason is that $|u| = O(x^\mu)$ as $x \to 0^+$ for all $\mu \in (0,1)$ from the first step, which implies $a_\delta  = \max_{\partial U_\delta}(|u|/B) = O(\delta^{\mu-1})$ as $\delta \to 0$. Thus, it suffices to choose $\mu \in (\epsilon,1)$.
\end{proof}

Sharp higher-order bounds for $u$ are now almost immediate as in \cite[Lemma 2.4]{JiangShi}. Here we use the flatness of $D$ although it seems likely that this can be avoided using the methods of \cite{HeinTosatti}.

\begin{lemma}\label{lem-v_higher}
Assuming only that $|u| = O(x)$ as $x \to 0^+$ and that $\omega$ is uniformly equivalent to $\omega_h$, we obtain that $|\nabla^k_{\omega_h}u|_{\omega_h}=O_k(x)$  for all $k \in \N$.
\end{lemma}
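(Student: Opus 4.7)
The plan is to pass to bounded-geometry quasi-coordinates on $L\setminus D$ and then bootstrap standard Schauder estimates for the linearized operator $L_h$ of Lemma \ref{lem:Lu} so as to promote the pointwise decay $|u| = O(x)$ to $C^{k,\alpha}$ decay of the same polynomial order.

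More precisely, fix $p \in U \setminus D$ and put $x_0 := x(p)$. Using the flatness of $D$ (so that the universal cover of any domain in $D$ is all of $\mathbb{C}^{n-1}$), one can enlarge the $z'$-range in a chart $(z', z_n)$ around $p$ to obtain a holomorphic chart $\Phi: B \to L\setminus D$, where $B\subset\mathbb{C}^n$ is a ball of radius independent of $x_0$ centered at a point where $x \equiv x_0$, and such that the pullback $\tilde\omega_h := \Phi^* \omega_h$ is uniformly equivalent to the Euclidean K\"ahler form on $B$ and satisfies $\|\tilde\omega_h\|_{C^k(B)} \leq C_k$ uniformly in $x_0$. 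On $B$, the function $\tilde u := \Phi^* u$ satisfies a complex Monge--Amp\`ere equation of uniformly elliptic type with smooth coefficients; by hypothesis $\|\tilde u\|_{C^0(B)} = O(x_0)$, and $\tilde\omega_h + \ddbar \tilde u$ is uniformly equivalent to $\tilde\omega_h$. Standard local regularity of the complex Monge--Amp\`ere equation (Evans--Krylov followed by Schauder bootstrapping, as in \cite{CY}) then yields $\|\tilde u\|_{C^{k,\alpha}(B')} \leq M_k$ on a smaller ball $B' \Subset B$, where $M_k$ is independent of $x_0$.

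To upgrade these bounds to $O(x_0)$, I would rewrite the equation as $L_h \tilde u = -Q(\nabla^2 \tilde u)$, where $Q$ is a smooth function of its argument vanishing to second order at the origin (it is the Taylor remainder of $\log\det$ beyond the linear term). Classical interpolation between $\|\tilde u\|_{C^0(B')} = O(x_0)$ and $\|\tilde u\|_{C^{2,\alpha}(B')} = O(1)$ gives $\|\tilde u\|_{C^2(B')} = o(1)$ as $x_0 \to 0$. Applying the linear Schauder estimate for the uniformly elliptic operator $L_h$, with the nonlinear right-hand side treated as known, I obtain on a still smaller ball $B'' \Subset B'$ that
\[
\|\tilde u\|_{C^{2,\alpha}(B'')} \leq C\bigl(\|\tilde u\|_{C^0(B')} + \|Q(\nabla^2\tilde u)\|_{C^{0,\alpha}(B')}\bigr) \leq C\|\tilde u\|_{C^0(B')} + C\|\tilde u\|_{C^2(B')}\,\|\tilde u\|_{C^{2,\alpha}(B')},
\]
and since the prefactor $\|\tilde u\|_{C^2(B')}$ of the bad term is $o(1)$, it can be absorbed into the left-hand side for $x_0$ sufficiently small, producing $\|\tilde u\|_{C^{2,\alpha}(B'')} = O(x_0)$. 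An induction on $k$ using higher-order linear Schauder together with the chain rule applied to the smooth, doubly-vanishing function $Q$ then promotes this to $\|\tilde u\|_{C^{k,\alpha}} = O(x_0)$ for every $k$ on a further sequence of nested balls. Because $\tilde\omega_h$ has bounded geometry on $B$, the conversion from Euclidean derivatives to $\omega_h$-covariant derivatives introduces only bounded factors, and one concludes that $|\nabla^k_{\omega_h} u|_{\omega_h}(p) = O(x_0) = O(x(p))$, uniformly in $p$.

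The main obstacle is really the first step: without flatness of $D$ the universal cover of a small chart in $D$ may itself be small, and $\omega_h$ would cease to have bounded geometry at unit scale, which is precisely where the methods of \cite{HeinTosatti} would be required. The rest of the argument is a fairly standard combination of interpolation and Schauder absorption, but the absorption step does genuinely use the smallness of $\|\tilde u\|_{C^2}$ produced by interpolation; without it, the nonlinear Schauder bound would merely reproduce the a priori $O(1)$ estimate rather than the desired $O(x)$.
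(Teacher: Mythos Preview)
Your approach is correct and parallels the paper's: pass to quasi-coordinates using the flatness of $D$, obtain uniform $C^k$ bounds from Monge--Amp\`ere regularity, then bootstrap to $O(x)$. The difference lies in the final bootstrap step. The paper rewrites the equation in mean-value form as
\[
\left(\int_0^1 (\omega_h + t\,i\partial\bar\partial u)^{\bar k j}\, dt\right) \partial_j\partial_{\bar k} \tilde u = \tilde u,
\]
which is a \emph{linear homogeneous} equation in $\tilde u$ whose coefficients are already smooth thanks to the preliminary uniform $C^k$ bounds. Interior Schauder then gives $\|\tilde u\|_{C^{k,\alpha}} \leq C_k \|\tilde u\|_{C^0} = O(x_0)$ in a single step, with no interpolation or absorption required. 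Your route via $L_h \tilde u = -Q(\nabla^2 \tilde u)$ also works, but be aware that the absorption step as written has a ball mismatch: the Schauder inequality bounds $\|\tilde u\|_{C^{2,\alpha}(B'')}$ by a quantity involving $\|\tilde u\|_{C^{2,\alpha}(B')}$ on the larger ball, so one cannot literally absorb. The standard fix is either to iterate finitely many times over a nested sequence of balls (the exponent $a$ in $\|\tilde u\|_{C^{2,\alpha}} = O(x_0^a)$ strictly increases at each step and reaches $1$), or to use weighted interior norms on a single domain. Either way your argument goes through; the paper's mean-value rewriting simply sidesteps this bookkeeping.
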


\begin{proof}
Shrinking $U$ slightly if necessary, we can assume that $\partial U = \{h = \delta\}$ for some $\delta \in (0,1)$.

Consider a local chart $(x_\alpha,y_\alpha, x, \theta)$ on $U \setminus D$ as in Sections \ref{ss:modelmetric}--\ref{ss:modelop}. Thus, for some $p \in D \subset U$, we have local holomorphic coordinates $(z', z_n)$ on $U$ mapping $p$ to $0 \in \mathbb{C}^n$ such that the bundle projection $\pi$ is given by $z \mapsto z' = (z_1,\ldots,z_{n-1})$. In terms of these coordinates, $x_\alpha = {\rm Re}(z_\alpha)$ and $y_\alpha = {\rm Im}(z_\alpha)$ for all $\alpha \in \{1,\ldots,n-1\}$, and $z_n = r e^{i\theta}$ and $x = (-{\log h})^{-1} = (\varphi(z') - \log r^2)^{-1}$.

Because $D$ is flat, there exists a holomorphic covering map $\mathbb{C}^{n-1} \to D$. The line bundle $L$ becomes holomorphically trivial after pulling back under this map. We can then assume that $z'$ is the standard coordinate vector on $\mathbb{C}^{n-1}$ and that $L$ has been trivialized in such a way that $\varphi(z')$ is a homogeneous quadratic polynomial. Moreover, after passing to the universal cover of the set $\{z_n \neq 0\}$, we may view $\theta$ as a globally defined function. In this way, we realize the universal cover of $U \setminus D$ as a subset of $\mathbb{C}^n$ with holomorphic coordinates $(z', w)$, where $w := \log r + i\theta$. We also get a smooth chart $(x_\alpha,y_\alpha,x,\theta)$ identifying the universal cover of $U \setminus D$ with $\mathbb{R}^{2n-2} \times (0,x_\delta] \times \R$, where $x_\delta := (-{\log \delta})^{-1}$.

Fix $q \in U \setminus D$ such that $c_n x(q) < x_\delta$, where $c_n := \exp(\sqrt{2/(n+1)})$. Let $B_1(q)$ be the Riemannian unit ball with respect to $\omega_h$ centered at $q$. Then, using our distance function $\rho$ from \eqref{eq:fctns},
\begin{align}\label{eq:ballsqueeze}
B_1(q) \subset \{\rho(q)-1<\rho<\rho(q)+1\} = \{ c_n^{-1} x(q) < x < c_n x(q) \} \subset U\setminus D.
\end{align}
On the universal cover of $U \setminus D$, we define a new chart via
\begin{align}\label{eq-quasi-coor}
(\check{x}_\alpha, \check{y}_\alpha, \check{x}, \check{\theta}) := (x(q)^{\frac{1}{2}}x_\alpha, x(q)^{\frac{1}{2}}y_\alpha, x(q)^{-1}x, x(q)\theta).
\end{align}
Then, on the preimage of $B_1(q)$, uniformly as $x(q) \to 0^+$, the Riemannian metric associated with $\omega_h$ is $C^\infty$ equivalent to the standard Euclidean metric in the chart $(\check{x}_\alpha,\check{y}_\alpha,\check{x},\check{\theta})$. This is easy to check using Lemmas \ref{l:formula}--\ref{lem-n}, equation \eqref{eq:ballsqueeze}, and the fact that $\varphi(z')$ is a homogeneous quadratic polynomial.

We also introduce new holomorphic coordinates 
\begin{align}
\check{z}_\alpha := x(q)^{\frac{1}{2}}z_\alpha = \check{x}_\alpha + i\check{y}_\alpha,\;\,\check{w} := x(q)w = \frac{1}{2}\left(\varphi(\check{z}') - \frac{1}{\check{x}}\right) + i\check\theta.
\end{align}
Using indices with respect to $(\check{z}',\check{w})$, the Monge-Amp\`ere equation \eqref{eq-main} can be written as
\begin{align}\label{eq-guesswhat}
\int_0^1 (\omega_h + t \ddbar u)^{\bar{k} j} dt  \cdot \partial_j\partial_{\bar k} u = u
\end{align}
on a fixed ball in $\C^n$, where the coefficient matrix of $\omega_h$ is uniformly comparable to the identity to all orders. Since $\omega = \omega_h + \ddbar u$ is uniformly equivalent to $\omega_h$ and $|u|$ is in particular $O(1)$, the standard theory of the Monge-Amp\`ere equation now tells us that $u$ is bounded to all orders. Using this and the fact that $|u| = O(x)$, the lemma follows from Schauder theory applied to \eqref{eq-guesswhat}.
\end{proof}

By interpolation, we will now show that all derivatives of $u$ with at least one $x_\alpha,y_\alpha,\theta$-direction are actually $O(x^\infty)$ because all of these directions are small under our metric $\omega_h$. For this and for the rest of the paper, it is helpful to directly prove an even stronger estimate in the $\theta$-directions. We do this using an argument similar to \cite[Prop 2.9(i)]{H}, which does not rely on the flatness of $D$.

\begin{lemma}\label{lem:circle}
Assuming only that $|\nabla_{\omega_h}u|_{\omega_h} = O(x)$ as $x \to 0^+$ and that $\omega$ is uniformly equivalent to $\omega_h$,  we obtain an $\epsilon > 0$ such that $|\nabla_{\omega_h}^k (\mathcal{L}_{\partial_\theta} u)|_{\omega_h} = O_k(e^{-\epsilon/x})$ for all $k \in \N_0$.
\end{lemma}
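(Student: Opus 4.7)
The proof has two main ingredients: a linear PDE derived from the Killing symmetry, and a bootstrap from polynomial-to-all-orders decay to genuine exponential decay.

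Set $v := \mathcal{L}_{\partial_\theta}u = u_\theta$. Since $\partial_\theta$ is Killing for $\omega_h$ (so in particular $\partial_\theta\log\det(g_{h,j\bar k})=0$ in the chart of Section~\ref{ss:modelmetric}), differentiating the Monge-Amp\`ere equation \eqref{eq-main} in the $\partial_\theta$-direction reduces it to the scalar linear eigenvalue equation
\begin{equation*}
\Delta_\omega v = v, \qquad \text{i.e.,} \qquad g^{\bar k j} v_{j\bar k} = v.
\end{equation*}
Combining the pointwise bound $|v| \leq |\nabla u|_{\omega_h}|\partial_\theta|_{\omega_h}$ with Lemma~\ref{lem-v_higher} and $|\partial_\theta|^2_{\omega_h} = g_{\theta\theta} = 2(n+1)x^2$ from \eqref{eq-gthetatheta} already gives the initial estimate $|v| = O(x^2)$.

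The first major step is to upgrade this to $|\nabla^k_{\omega_h} v|_{\omega_h} = O_k(x^\infty)$. The key observation is that $v$ and all its iterated $\theta$-derivatives have vanishing mean on each $S^1$-fiber, each being a $\theta$-derivative of a $2\pi$-periodic function. Iterating Poincar\'e-Wirtinger on the standard $S^1$,
\begin{equation*}
\|v\|_{L^2(S^1)} \leq \|\partial_\theta^N v\|_{L^2(S^1)} \leq (2\pi)^{1/2}\|\partial_\theta^{N+1}u\|_{L^\infty(S^1)} \leq C_N\,x\cdot x^{N+1} = O_N(x^{N+2}),
\end{equation*}
using $|\partial_\theta^{N+1}u| \lesssim |\nabla^{N+1}u|_{\omega_h}|\partial_\theta|^{N+1}_{\omega_h}$ and Lemma~\ref{lem-v_higher}. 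The Sobolev embedding $H^1(S^1)\hookrightarrow L^\infty(S^1)$ then converts this to pointwise $|v|=O_N(x^{N+2})$ for every $N$; standard Schauder theory for $\Delta_\omega v = v$ in the $\omega_h$-unit-scale quasi-coordinates \eqref{eq-quasi-coor} upgrades this to all $\omega_h$-covariant derivatives.

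The second major step promotes polynomial to exponential decay. Writing the equation as $L_h v = (\Delta_{\omega_h}-\Delta_\omega)v = O(x)\cdot|\ddbar v|_{\omega_h} = O(x^\infty)$, I would Fourier-expand $v = \sum_{k\ne 0} v_k(z',r)\,e^{ik\theta}$ on the universal cover in $\theta$. The $S^1$-invariance of $\omega_h$ makes $\Delta_{\omega_h}$ preserve Fourier modes, so each $v_k$ satisfies a reduced equation $L_h^{(k)} v_k = O_k(x^\infty)$. Crucially, the $(2x)^{-2}u_{\theta\theta}$ term from Lemma~\ref{lem:Lu} contributes an effective negative mass $-k^2/(4(n+1)x^2)$ to $L_h^{(k)}$, so the test function $B = e^{-\epsilon/x}$ satisfies
\begin{equation*}
L_h^{(k)} B = \left[\frac{\epsilon^2 - k^2/4}{(n+1)\,x^2} + O\!\left(\frac{k^2}{x}\right)\right]B,
\end{equation*}
strictly negative for all $|k|\geq 1$ once $\epsilon\in(0,1/2)$ and $x$ is small. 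The weak maximum principle on $\{0<x\leq x_0\}$, with inner asymptotic data $v_k\to 0$ from Step~1 and outer boundary data $|v_k|\leq M_k e^{-\epsilon/x_0}$, then gives $|v_k|\leq M_k e^{-\epsilon/x}$; since smoothness of $v$ at $x=x_0$ forces $M_k$ to decay faster than any polynomial in $|k|$, summing yields $|v|=O(e^{-\epsilon/x})$, and Schauder theory in the same quasi-coordinates propagates this to all covariant derivatives. The hardest part is this mode-wise barrier step: one must handle the $z'$-dependence of each $v_k$, the $O(k^2/x)$ error terms in $L_h^{(k)}$, and the mode coupling from $\omega\ne\omega_h$ uniformly in $k$, all of which Step~1's polynomial-to-all-orders decay renders negligible next to the dominant $-k^2/(4(n+1)x^2)$ dynamics as $x\to 0^+$.
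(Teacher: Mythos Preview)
Your setup is correct, and Step 1 (bootstrapping $|v|$ to $O(x^\infty)$ via iterated Poincar\'e--Wirtinger on the $S^1$-fibers) is a nice observation the paper does not make. The gap is in Step 2.

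After splitting $\Delta_\omega v = v$ as $L_h v = (\Delta_{\omega_h}-\Delta_\omega)v =: f$ and Fourier-decomposing in $\theta$, you propose to prove $|v_k|\leq M_k B$ with $B=e^{-\epsilon/x}$ by the maximum principle. For this to close you need $M_k\,|L_h^{(k)}B|\geq |f_k|$ on $(0,x_0]$. But $|L_h^{(k)}B|\sim \tfrac{k^2}{4(n+1)}\,x^{-2}e^{-\epsilon/x}$ is \emph{exponentially} small as $x\to 0^+$, whereas ``$f_k=O(x^\infty)$'' only means $|f_k|\leq C_{k,N}x^N$ for each fixed $N$---this decays merely polynomially and is in general \emph{infinitely larger} than $x^{-2}e^{-\epsilon/x}$ (take e.g.\ $f_k=e^{-1/\sqrt{x}}$, which is $O(x^\infty)$ but dominates every $x^{-2}e^{-\epsilon/x}$). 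So no finite $M_k$ makes the supersolution inequality hold near $x=0$, and the barrier step fails. Equivalently: the particular solution of $L_h^{(k)}v_k=f_k$ is only of size $|f_k|/|c(x)|\sim x^2|f_k|=O(x^\infty)$, not $O(e^{-\epsilon/x})$; the strong potential $-k^2/(4(n+1)x^2)$ does not by itself force exponential decay of an inhomogeneous solution whose source is only polynomially small. Your final sentence conflates the size of the \emph{potential} with the size of $L_h^{(k)}B$, which is potential times $B$ and hence exponentially small.

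The paper avoids this entirely by working directly with $\Delta_\omega v=v$ (no splitting, hence no inhomogeneity): integrating by parts gives $E(\rho):=\int_{U_\rho}(|\nabla_\omega v|^2+v^2)\,dV_\omega = -\int_{\partial U_\rho} v\,\mathbf n_\omega(v)\,dA_\omega$, and a Poincar\'e inequality on the cross-section (using only that $v\perp$ the $S^1$-invariant functions, so the first relevant eigenvalue is $\geq x^{-1}$) yields the differential inequality $E(\rho)\leq -Cx\,E'(\rho)$. Integrating this ODE gives $E(\rho)\leq Ce^{-\epsilon/x}$, and Moser/Schauder at the injectivity-radius scale upgrades this to pointwise bounds. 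The energy argument is what bridges $O(x^\infty)$ to $O(e^{-\epsilon/x})$; a mode-wise barrier against a fixed $O(x^\infty)$ source cannot.
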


\begin{proof}
Recall that $\partial_\theta$ is a globally defined holomorphic vector field with $\mathcal{L}_{\partial_\theta}\omega_h = 0$. Define $v = \mathcal{L}_{\partial_\theta} u$. Applying the operator $\mathcal{L}_{\partial_\theta}$ to the Monge-Amp\`ere equation \eqref{eq-main}, we thus obtain that
\begin{align}\label{eq-v}
\Delta_{\omega} v = v,
\end{align}
where $\omega = \omega_h + \ddbar u$ is by assumption uniformly equivalent to $\omega_h$.

The first step is to prove that $v \in H^1(U \setminus D, \omega_h)$. We have $|v| = O(x^2)$ because $|\nabla_{\omega_h} u|_{\omega_h} = O(x)$ and $|\partial_\theta|_{\omega_h} = O(x)$. Thus, $v \in L^2(U \setminus D, \omega_h)$ because ${\rm Vol}(U\setminus D,\omega_h) < \infty$. It is also not difficult to estimate the gradient of $v$ with respect to $\omega_h$. For example, this can easily be done using quasi-coordinates as in the proof of Lemma \ref{lem-v_higher}. The following method also works if $D$ is not flat. Assume that $\partial U = \{\rho = \rho_0\}$ for some $\rho_0 > 0$. For $k \in \N$, $k \geq \rho_0+1$, take a cutoff function $\eta_k = \eta_k(\rho)$ that equals $1$ for $\rho_0\leq \rho\leq k$ and $0$ for $\rho\geq k+1$, with $|\nabla_{\omega_h}\eta_k|_{\omega_h} \leq 2$. Then, by \eqref{eq-v},
 \begin{align}\label{eq:tru}
 \int_{U \setminus D} \eta_k^2 v\Delta_{\omega} v  \,dV_\omega = \int_{U \setminus D} \eta_k^2 v^2 dV_\omega.
 \end{align}
By integration by parts,
\begin{align}\label{eq:llala}
\int_{U \setminus D} \eta_k^2  v\Delta_{\omega} v \, dV_\omega  = - \int_{U \setminus D} \eta_k^2 |\nabla_{\omega} v|_\omega^2 \, dV_\omega -\int_{U \setminus D} 2\eta_k v \langle \nabla_{\omega} \eta_k, \nabla_{\omega} v\rangle_\omega \, dV_\omega.
\end{align}
After combining \eqref{eq:tru}--\eqref{eq:llala} and applying Cauchy-Schwarz in the obvious way, we can replace $\omega$ by $\omega_h$ because $\omega$ is uniformly equivalent to $\omega_h$. Letting $k \to \infty$, we obtain the desired $H^1$ bound.

For the main step of the proof, define $U_{\rho} = \{q\in {U \setminus D} : \rho(q) \geq  \rho\}$. Then, by \eqref{eq-v},
\begin{align}
E(\rho):= \int_{U_{\rho}}(|\nabla_\omega v|_{\omega}^2 + v^2) \, dV_{\omega} = -\int_{\partial U_{\rho}} v \mathbf{n}_{\omega}(v) \, dA_{\omega},
\end{align}
where $\mathbf{n}_\omega$ denotes the $\omega$-unit normal of $\partial U_\rho$ pointing towards the cuspidal end. Thus,
\begin{align}\label{eq:uhl}
E(\rho) \leq \left( \int_{\partial U_{\rho}} v^2 dA_{\omega}\right)^{\frac{1}{2}}\left( \int_{\partial U_{\rho}} (\mathbf{n}_\omega(v))^2 dA_{\omega} \right)^{\frac{1}{2}}.
\end{align}
We will now develop a Poincar\'e inequality to deal with the first factor.
 
By Lemma \ref{l:formula}, the restriction of $g_h$ to $Y := \partial U_\rho$ is given by
\begin{align}
g_h|_{Y} = 2(n+1)x(g_D + x \alpha^2),
\end{align}
where $\rho = -\sqrt{(n+1)/2} \log x$ as usual, $g_D$ is a fixed metric on $D$ pulled back under the $S^1$-orbit map $Y \to D$, and $\alpha := d\theta - \frac{1}{2}d^c\varphi$ is an $S^1$-connection form, i.e., $\alpha(\partial_\theta) = 1$ and $\mathcal{L}_{\partial_\theta} \alpha = 0$. The $S^1$-action preserves the metric $g := g_D + x\alpha^2$, so all eigenspaces of the Laplacian of $g$ are $S^1$-invariant. Thus, by the representation theory of $S^1$, every eigenspace $E = \{\phi \in L^2(Y,\R): \Delta_g\phi = -\lambda\phi\}$ can be written as the $L^2$-orthogonal sum of a space $E_0$ of $S^1$-invariant functions and of planes $\mathbb{R}\phi_1 + \mathbb{R}\phi_2$, where $\phi_1,\phi_2$ are $L^2$-orthonormal and $\mathcal{L}_{\partial_\theta}\phi_1 = -k\phi_2$, $\mathcal{L}_{\partial_\theta}\phi_2 = k\phi_1$ for some $k \in \N_{\geq 1}$. Then, for all $\phi \in E$,
\begin{align}\label{eq:uhl3}
\lambda \int_Y \phi^2 = \int_Y |\nabla \phi|^2 \geq \int_Y (\mathcal{L}_{x^{-1/2}\partial_\theta}\phi)^2 = x^{-1}\int_Y (\mathcal{L}_{\partial_\theta}\phi)^2 \geq x^{-1}\int_Y ({\rm proj}_{E_0^\perp}\phi)^2,\end{align}
where all metric quantities are understood in terms of $g$. The first inequality holds because $x^{-1/2}\partial_\theta$ is a $g$-unit vector, and the second inequality follows by splitting $\phi$ into its components in $E_0$ and in the planes of frequency $k \in \mathbb{N}_{\geq 1}$ described above. See \cite[pp.447--448]{KK} for a related argument.

In our situation, $v = \mathcal{L}_{\partial_\theta}u$ is $L^2$-orthogonal to all $S^1$-invariant functions on $Y$. Thus, if $\phi = {\rm proj}_E v$, then $\phi = {\rm proj}_{E_0^\perp}\phi$, so either $\phi = 0$ or $\lambda \geq x^{-1}$ by \eqref{eq:uhl3}. Thus, because $g_h|_{Y} = 2(n+1)xg$, 
\begin{align}\label{eq:uhl2}
\int_{\partial U_\rho} v^2 dA_{\omega_h} \leq 2(n+1)x^2 \int_{\partial U_\rho} (|\nabla_{\omega_h}v |_{\omega_h}^2 - (\mathbf{n}_{\omega_h}(v))^2)\, dA_{\omega_h}.
\end{align}
We now drop the $(\mathbf{n}_{\omega_h}(v))^2$ term from \eqref{eq:uhl2}, replace $\omega_h$ by $\omega$ as the reference metric, plug the resulting inequality into \eqref{eq:uhl}, and trivially bound the $(\mathbf{n}_\omega (v))^2$ term in \eqref{eq:uhl} above by $|\nabla_\omega v|_\omega^2$. Thus,
\begin{align}
E(\rho) \leq Cx \int_{\partial U_\rho} |\nabla_\omega v|_\omega^2\,dA_\omega \leq -  Cx \left(\sup_{\partial U_\rho} |\nabla_\omega \rho|_{\omega}\right) E'(\rho) \leq -Cx E'(\rho)\label{eq-ODE-Ev}
\end{align}
for some uniform constant $C$, where the second inequality follows from the coarea formula. Dividing both sides of $\eqref{eq-ODE-Ev}$ by $E(\rho)$ and integrating from $\rho_0$ to $\rho$, we derive that for some $\epsilon > 0$,
\begin{align}
E(\rho) \leq E(\rho_0) \exp\left(-\frac{1}{C}\int_{\rho_0}^\rho \frac{d\rho}{x(\rho)} \right) \leq C e^{-\epsilon/x}.
\end{align}

This decay rate is so fast that it now suffices to use elliptic regularity at the scale of the injectivity radius. Indeed, Moser iteration applied to \eqref{eq-v} on $\omega_h$-balls of radius $x$ yields a pointwise bound for $v$ which is $O(x^{-N}e^{-\epsilon/x})$ for some $N > 0$, and $O(x^{-N}e^{-\epsilon/x}) = O(e^{-(\epsilon/2)/x})$. Then higher-order estimates for $v$ follow from Schauder theory applied to \eqref{eq-v} on balls of radius $x$. Here we also need higher-order estimates for the coefficient metric $\omega$, which follow from the standard regularity theory of the Monge- Amp\`ere equation on $\omega_h$-balls of radius $x$. Indeed, by assumption, $|\nabla_{\omega_h} u|_{\omega_h}$ decays exponentially fast in terms of the distance function $\rho$, so $u$ converges to a constant exponentially fast in terms of $\rho$, and $\omega$ is uniformly equivalent to $\omega_h$, so $|\nabla^k_{\omega_h} u|_{\omega_h} = O_k(x^{-k})$ from the standard theory.
\end{proof}

Finally, we obtain the following self-improvement of Lemma \ref{lem-v_higher}. This is similar to \cite[Lemma 2.5]{JiangShi}, although in \cite{JiangShi} this improvement only applies to the $\theta$-derivatives because the $x_\alpha,y_\alpha$-directions have length $O(1)$, and here we already have a better estimate for the $\theta$-derivatives.

\begin{lemma}\label{lem-infinity}
Assuming the conclusions of Lemmas \ref{lem-v_higher} and \ref{lem:circle}, we obtain that $u = \mathbf{O}(x)$.
\end{lemma}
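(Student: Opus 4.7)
The plan is to combine the $O(x)$ covariant-derivative bound of Lemma~\ref{lem-v_higher} with the exponential $\partial_\theta$-decay of Lemma~\ref{lem:circle} and with Fourier analysis on the flat torus $D$. Definition~\ref{defn-boldO} has two clauses, and I would treat them separately.

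The first clause, $|x^c \partial_x^c u| = O(x)$, follows quickly from the rescaled quasi-coordinates $(\check x_\alpha, \check y_\alpha, \check x, \check\theta)$ already introduced in the proof of Lemma~\ref{lem-v_higher}. On a Riemannian unit ball $B_1(q) \subset U\setminus D$, these coordinates make $\omega_h$ uniformly $C^\infty$-equivalent to Euclidean, so Schauder theory applied to the rewritten Monge--Amp\`ere equation~\eqref{eq-guesswhat}, together with $|u| = O(x(q))$ on $B_1(q)$, gives the uniform bound $\|u\|_{C^k(\text{rescaled})} \le C_k x(q)$ for every $k$. The substitution $\partial_x = x(q)^{-1}\partial_{\check x}$ then immediately yields $|x^c \partial_x^c u| \le C_c x(q)$, matching~\eqref{eq:fatO1}.

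For the second clause~\eqref{eq:fatO2}, any mixed derivative containing a $\partial_\theta$-factor ($d \ge 1$) can be commuted by Schwarz into a covariant derivative of $\mathcal{L}_{\partial_\theta} u$, and is therefore $O_k(e^{-\epsilon/x})$ by Lemma~\ref{lem:circle}; this easily beats $O(x^N)$ for any $N$. The genuinely new case is $d = 0$ and $a + b \ge 1$, where I would use the flatness of $D$ in an essential way. On the universal cover of Lemma~\ref{lem-v_higher}, $u$ is invariant under lattice translations $z' \mapsto z' + \gamma$ for $\gamma$ in the covering lattice $\Lambda \subset \mathbb{C}^{n-1}$, so it admits a Fourier series $u = \sum_{m \in \Lambda^*} \hat u_m(x, \theta)\, e^{2\pi i\, m \cdot z'}$. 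Integrating by parts $2k$ times in the $D$-directions, using $\Delta_D = x(q)\,\Delta_{\check D}$ in the rescaled chart, and invoking the uniform $C^{2k}$-bound of size $x(q)$ from the previous paragraph, one obtains $|\hat u_m(x,\theta)| \le C_k\, |m|^{-2k}\, x(q)^{1+k}$ for every $k \in \mathbb{N}$ and every $m \ne 0$. Summing over $m \in \Lambda^* \setminus \{0\}$ (absolutely convergent as soon as $k > n-1$) gives $u - u_0 = O(x^N)$ for every $N$, where $u_0(x, \theta)$ denotes the $D$-slice average of $u$. Exactly the same integration by parts bounds every pure derivative of $u - u_0$. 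Since $\partial_{x_\alpha}^a \partial_{y_\alpha}^b u_0 = 0$ as soon as $a + b \ge 1$, the mixed derivatives we need to control are those of $u - u_0$ alone, and \eqref{eq:fatO2} follows.

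The main obstacle is the Fourier step in the third paragraph, which requires both the flat geometry of $D$ (to supply a lattice symmetry) and the fact that the fundamental domain shrinks at rate $x^{1/2}$ in the rescaled chart (so that a uniform $C^k$-bound on $u$ forces super-polynomial decay of its non-constant modes). Directly from Lemma~\ref{lem-v_higher} together with $|\partial_{x_\alpha}|_{\omega_h} = O(x^{1/2})$, one only sees $|\partial_{x_\alpha}u| = O(x^{3/2})$, which is very far from the $O(x^N)$ rate demanded by Definition~\ref{defn-boldO}. The torus symmetry is exactly the mechanism that makes this boosting possible.
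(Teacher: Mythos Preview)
Your argument is correct and structurally parallel to the paper's: the first clause and the $d\geq 1$ case of the second clause are handled exactly as in the paper, via the rescaled quasi-coordinates and Lemma~\ref{lem:circle}. The only divergence is in the hard case $d=0$, $a+b\geq 1$. There the paper first passes to the $\theta$-average $\bar v$ of $v=\partial_{x_\alpha}^a\partial_{y_\alpha}^b\partial_x^c u$ (using Lemma~\ref{lem:circle} to control $v-\bar v$), observes that $\bar v$ has vanishing integral over the compact torus $D$ because $a+b\geq 1$, and then iterates the mean value theorem on $D$: each iteration produces one more tangential derivative, hence one more factor of $\sqrt{x}$ by the quasi-coordinate estimates. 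You instead take the $D$-average $u_0(x,\theta)$, note $\partial_{x_\alpha}^a\partial_{y_\alpha}^b u_0=0$, and control $u-u_0$ and its derivatives via Fourier series on $D$ combined with integration by parts; the $2k$ integrations by parts encode the same gain of $x^k$. Both arguments rest on the identical geometric mechanism (compactness of $D$ plus the $\sqrt{x}$-scaling of tangential directions), but your Fourier route is slightly more direct in that it does not require the auxiliary $\theta$-averaging step, while the paper's mean-value iteration is more elementary (no series convergence to check).
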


The $\mathbf{O}$ notation is as in Definition \ref{defn-boldO}. For this we need to choose a finite cover of $U \setminus D$ by charts $(x_\alpha,y_\alpha,x,\theta)$, and we will use the ones constructed in the proof of Lemma \ref{lem-v_higher} using the flatness of $D$. It again seems likely that, using \cite{HeinTosatti}, the proof can be made to work even if $D$ is not flat.

\begin{proof}
Fix any point $q \in U \setminus D$. Using the rescaled chart $(\check{x}_\alpha,\check{y}_\alpha,\check{x},\check{\theta})$ of \eqref{eq-quasi-coor} on the preimage of the set $\{c_n^{-1}x(q) < x < c_nx(q)\}\supset B_1(q)$, which makes the pullback of $\omega_h$ Euclidean to all orders uniformly as $x(q) \to 0^+$, we can state the results of Lemmas \ref{lem-v_higher}--\ref{lem:circle} as follows: for all $a,b,c,d \in \N_0$, 
\begin{align}
|\partial_{\check{x}_\alpha}^a\partial_{\check{y}_\alpha}^b \partial_{\check{x}}^c \partial_{\check{\theta}}^d u| = \begin{cases}O_{a,b,c}(x(q)) &\text{if}\;d=0\\O_{a,b,c,d}(x(q)^{-d}e^{-\epsilon/x(q)})&\text{if}\;d \geq 1
\end{cases} \quad\text{as}\;x(q)\to 0^+.
\end{align}
Converting these estimates back to the unrescaled chart $(x_\alpha,y_\alpha,x,\theta)$, we gain a factor of $x(q)^{\frac{a+b}{2} - c + d}$ on the right-hand side. This implies property \eqref{eq:fatO1} for $k = 1$ as well as property \eqref{eq:fatO2} for $d \geq 1$. Thus, it remains to verify \eqref{eq:fatO2} for $d = 0$ and $a + b \geq 1$. For this it suffices to prove the implication
\begin{align}\label{eq:implic}
\begin{split}
&\;\forall A,B,C,D\in \N_0: |\partial_{x_\alpha}^{A}\partial_{y_\alpha}^{B} \partial_x^C \partial_\theta^D u| = O_{A,B,C}(x^{\frac{A+B}{2}-C})\cdot \begin{cases}O(x)&\text{if}\;D=0\\O_D(e^{-\epsilon/x}) &\text{if}\;D \geq 1\end{cases}\\
\Longrightarrow &\; \forall a,b,c \in \N_0: |\partial_{x_\alpha}^a\partial_{y_\alpha}^b\partial_x^c u| = O_{a,b,c}(x^\infty)\;\,\text{if}\;a+b\geq 1.
\end{split}
\end{align}

To prove \eqref{eq:implic}, first note that $v := \partial_{x_\alpha}^a\partial_{y_\alpha}^b\partial_x^c u$ satisfies $|\partial_\theta v| = O(e^{-\epsilon'/x})$ for any $0 < \epsilon' < \epsilon$ thanks to the left-hand side of \eqref{eq:implic}. Thus, if $\bar{v}$ denotes the average of $v$ with respect to $\theta$ over $[0,2\pi]$, then $|v - \bar{v}| = O(e^{-\epsilon'/x})$, so it is actually enough to prove that $|\bar{v}| = O(x^\infty)$. Let $\bar{u}$ denote the average of $u$ with respect to $\theta$ over $[0,2\pi]$, so that $\bar{v} = \partial_{x_\alpha}^a \partial_{y_\alpha}^b \partial_x^c \bar{u}$. Fixing the $x$-variable, we can view $\partial_x^c\bar{u}$ as a function on the flat torus $D$. Similarly, we can view $\partial_{x_\alpha},\partial_{y_\alpha}$ as commuting Killing vector fields on $D$. Because $a + b \geq 1$ and $D$ is a compact manifold without boundary, it follows that the average of $\bar{v}$ over $D$ is zero, so $\bar{v}(o) = 0$ for some point $o \in D$. Without loss, $o$ is the image of $0$ under the covering map $\C^{n-1} \to D$. Then, for a fixed convex fundamental domain $\Omega \ni 0$ of the deck group action,
\begin{align}
\sup_{z'\in\Omega}|\bar{v}(z')| = \sup_{z'\in\Omega}\left|\int_0^1 \frac{d}{dt}\bar{v}(tz')\, dt\right| \leq {\rm diam}(\Omega)\left(\sup_\Omega |\partial_{x_\alpha}\bar{v}| + \sup_\Omega |\partial_{y_\alpha}\bar{v}|\right).
\end{align}
Iterating this argument $k$ times and applying the left-hand side of \eqref{eq:implic} with $A+B = a+b+k$ (and $C = c$, $D = 0$), we obtain that $|\bar{v}| = O_k(x^{k_0+(k/2)})$ for all $k \in \N_0$, where $k_0 := \frac{a+b}{2}-c+1$.
\end{proof}

\subsection{Proof of Theorem \ref{thm:expansion-powers}}\label{ss:proof-3.1}

Throughout this section, assuming the conclusion of Lemma \ref{lem-infinity}, $D$ can again be an arbitrary compact Calabi-Yau manifold rather than a flat complex torus.

The following definition is modeled after \cite[Defn 5.1]{JiangShi}.

\begin{definition}\label{def-exp}
After shrinking $U$ if necessary, consider the finite cover of $U \setminus D$ by local charts of the form $(x_\alpha,y_\alpha,x,\theta)$ that was used in Lemma \ref{lem-infinity} via Definition \ref{defn-boldO}. For any $k\in \mathbb{N}_0$,
we say that a function $f \in C^\infty(U\setminus D)$ has an expansion of order $x^k$ if, under all charts of this cover,
\begin{align}
f = \sum_{i=0}^k C_i x^i + R_k,
\end{align}
where the $C_i$ are constants and there exists an $\epsilon>0$ such that $R_k = \mathbf{O}(x^{k+\epsilon})$.
\end{definition}

Thus, Lemma \ref{lem-infinity} implies that $u$ has an expansion of order $x^0$ with $C_0  = 0$ and with $\epsilon=1$.

\begin{lemma}\label{lem-PDE-ODE}
If $\psi$ is a $C^2$ function which only depends on $x$, then 
\begin{align}\label{eq-psi}
\frac{(\omega_h + \ddbar \psi)^n}{\omega_h^n} =  \frac{(n+1 + x\psi')^{n-1}(n+1 + x(x\psi)'')}{(n+1)^{n}}.
\end{align}
\end{lemma}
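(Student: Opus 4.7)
The plan is to exploit the ``two-form'' structure of $\omega_h$. By Lemma~\ref{lem:totallybasic}, in any holomorphic chart of Section~\ref{ss:modelmetric},
$$\omega_h = (n+1)\bigl[-x\,\ddbar\varphi + x^2\,i\partial\sigma\wedge\overline\partial\sigma\bigr],$$
with $\sigma = -\log h$, so $\omega_h$ lies in the two-dimensional real span of $\{\ddbar\varphi,\,i\partial\sigma\wedge\overline\partial\sigma\}$. Since $\psi$ depends only on $x$, the chain rule
$$\ddbar\psi(x) = \psi''(x)\,i\partial x\wedge\overline\partial x + \psi'(x)\,\ddbar x$$
reduces the problem to computing $i\partial x\wedge\overline\partial x$ and $\ddbar x$, both of which again lie in that same two-dimensional span.

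Next, I would compute those two ingredients from $x = 1/\sigma$. Immediately $\partial x = -x^2\partial\sigma$, so $i\partial x\wedge\overline\partial x = x^4\,i\partial\sigma\wedge\overline\partial\sigma$. For $\ddbar x$ one can either differentiate $-\sigma^{-1}$ directly, or use the identity $\ddbar\log x = \omega_h/(n+1)$ (a rewriting of $\omega_h = -(n+1)\ddbar\log\sigma$) together with $\ddbar\log x = x^{-1}\ddbar x - x^{-2}i\partial x\wedge\overline\partial x$; either route yields
$$\ddbar x = 2x^3\,i\partial\sigma\wedge\overline\partial\sigma - x^2\,\ddbar\varphi.$$
Assembling everything, and recognizing the algebraic identity $x\psi''+2\psi'=(x\psi)''$, I obtain the compact formula
$$\omega_h + \ddbar\psi = -xA\,\ddbar\varphi + x^2 B\,i\partial\sigma\wedge\overline\partial\sigma,\quad A := (n+1) + x\psi',\;\, B := (n+1) + x(x\psi)''.$$

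The final step is to raise to the $n$-th power, where two bidegree vanishings collapse the binomial expansion to a single term. Because $\varphi = \varphi(z_1,\dots,z_{n-1})$ depends on only $n-1$ variables, $(\ddbar\varphi)^n = 0$; and because $\partial\sigma$ is a $(1,0)$-form, $(i\partial\sigma\wedge\overline\partial\sigma)^2 = 0$. Hence only the $k=1$ cross term survives:
$$(\omega_h + \ddbar\psi)^n = n(-x)^{n-1}A^{n-1}\cdot x^2 B\cdot(\ddbar\varphi)^{n-1}\wedge(i\partial\sigma\wedge\overline\partial\sigma).$$
Specializing to $\psi\equiv 0$ (so $A=B=n+1$) yields the analogous expression for $\omega_h^n$, and taking the ratio gives exactly \eqref{eq-psi}. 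There is no genuine obstacle; the only points to get right are the sign bookkeeping in $\ddbar x$ and the recognition of the two combinations $(n+1)+x\psi'$ and $(n+1)+x(x\psi)''$, which are forced by the two bidegree vanishings.
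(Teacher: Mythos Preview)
Your proof is correct and complete, but it takes a genuinely different route from the paper's own argument. The paper works entirely in matrix form: it writes out the Hermitian matrix entries $g_{j\bar k}$ from \eqref{eq-gij} and the corresponding entries $\psi_{j\bar k}$ (computed via the change-of-variables formulas \eqref{eq-partial}), then simplifies $\det(g_{j\bar k}+\psi_{j\bar k})/\det(g_{j\bar k})$ by extracting common factors of $z_n^{-1}$, $\bar z_n^{-1}$ and performing column operations (adding a $\varphi_{\bar\beta}$-multiple of the $n$-th column to the $\beta$-th) to reach block-triangular form. Your approach is more invariant: you observe that both $\omega_h$ and $\ddbar\psi$ lie in the real span of $\ddbar\varphi$ and $i\partial\sigma\wedge\overline\partial\sigma$, compute $\ddbar x$ once, and then let the bidegree vanishings $(\ddbar\varphi)^n=0$ and $(i\partial\sigma\wedge\overline\partial\sigma)^2=0$ collapse the binomial expansion. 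This is cleaner and explains structurally \emph{why} the answer factors as a power of $A$ times $B$---the paper's column operations achieve the same factorization but less transparently. The paper's matrix computation, on the other hand, reuses formulas already needed elsewhere (e.g.\ \eqref{eq-partial} feeds into Lemma~\ref{lem:Lu}), so it costs nothing extra in context.
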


\begin{proof}
Consider the holomorphic chart $(z',z_n)$ underlying the given smooth chart $(x_\alpha,y_\alpha,x,\theta)$ as in Section \ref{ss:modelmetric}. Then, by \eqref{eq-gij}, the metric coefficients of $\omega_h$ are given by
\begin{align}\begin{split}\label{eq-g-detail}
g_{\alpha \bar \beta} & =(n+1)(-x\varphi_{\alpha\bar \beta}+x^2\varphi_\alpha \varphi_{\bar \beta}),\\
g_{\alpha \bar n} & =-  (n+1)\frac{x^2}{\bar{z}_n}\varphi_\alpha,\\
g_{n \bar n} & =(n+1)\frac{x^2}{|z_n|^2}.
\end{split}
\end{align}
On the other hand, using \eqref{eq-partial} we compute
\begin{align}\begin{split}
\psi_{\alpha \bar \beta}& = x^3(2\psi' + x \psi'')\varphi_\alpha \varphi_{\bar \beta} - x^2  \psi' \varphi_{\alpha \bar \beta},\\
\psi_{\alpha \bar n} &= -\frac{x^3}{\bar{z}_n}(2\psi' + x \psi'')\varphi_\alpha,\\
\psi_{n \bar n} &= \frac{x^3}{|z_n|^2}(2\psi' + x \psi'').
\end{split}
\end{align}
With this, we can now compute the ratio of $\det(g_{j\bar k} + \psi_{j\bar k})$ and $\det(g_{j\bar k})$. In both of these determinants, we can take out common factors of $\bar{z}_n^{-1}$ in the $n$-th column and $z_n^{-1}$ in the $n$-th row. Then, by adding a $\varphi_{\bar \beta}$ multiple of the $n$-th column of each matrix to its $\beta$-th column, we create a block triangular matrix in each case. A simple calculation concludes the proof of the lemma.
\end{proof}

We now rewrite the complex Monge-Amp\`ere equation \eqref{eq-main} as
\begin{align}\label{eq:MAOp}
M_h(u) :=\log\left(\frac{(\omega_h + \ddbar u)^n}{\omega_h^n}\right)-u =0,
\end{align}
and we decompose $M_h$ into its linearization at $\omega_h$ and its nonlinear parts via
\begin{align}
M_h = L_h + Q_h.
\end{align}
Here, $L_h$ was already defined in \eqref{eq:defineLh}, and $Q_h$ is a convergent power series in $\ddbar u$ without constant or linear terms. Using Lemma \ref{lem:Lu}, we can further rewrite this as 
\begin{align}\label{eq-ODE}
x^2 u_{xx} +(n+1) xu_x - (n+1) u = G
\end{align}
in every local chart of our chosen cover of $U \setminus D$, where
\begin{align}\label{eq-F1}
\begin{split}
G := x^{-1}\varphi^{\bar \beta \alpha} u_{\alpha \bar \beta} -(2x)^{-2}u_{\theta\theta} - (n+1)(F + Q_h(u)), \\
F =F(x_\alpha,y_\alpha, x^{-1} u_{\theta\alpha}, x^{-1} u_{\theta\bar\alpha},x^{-1} u_{\theta\theta}),
\end{split}
\end{align}
and $F$ is smooth in $x_\alpha,y_\alpha$ and linear homogeneous in the remaining arguments.

We have the following lemma, which is analogous to \cite[Lemma 5.1]{JiangShi}.

\begin{lemma}\label{lem-UF}
For all $k \in \N_0$, if $u$ has an expansion of order $x^k$, with $\epsilon>\frac{1}{2}$ if $k = 0$, then $G$ has an expansion of order $x^{k+1}$.
\end{lemma}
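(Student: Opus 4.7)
\emph{Proof plan.} Write $u = P_k + R_k$ with $P_k := \sum_{i=0}^k C_i x^i$ and $R_k = \mathbf{O}(x^{k+\epsilon})$, and split $G(u) = G(P_k) + [G(u) - G(P_k)]$. Since $P_k$ depends only on the smooth coordinate $x$, in each of our charts we have $(P_k)_{\alpha\bar\beta} = (P_k)_{\theta\theta} = 0$ and every argument of $F$ vanishes, hence by \eqref{eq-F1} we have $G(P_k) = -(n+1)Q_h(P_k)$. Applying Lemma~\ref{lem-PDE-ODE} to $\psi = P_k$ shows that $M_h(P_k) = \log[(\omega_h+\ddbar P_k)^n/\omega_h^n] - P_k$ is a convergent analytic function of $x$ near $x=0$. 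Since $L_h(P_k)$ is a polynomial in $x$, so is $Q_h(P_k) = M_h(P_k) - L_h(P_k)$ analytic, and a direct inspection of the leading Taylor coefficients using \eqref{eq-psi} shows that its $x^0$- and $x^1$-coefficients vanish. Truncating at order $x^{k+1}$ therefore yields constants $D_2,\ldots,D_{k+1}$ with remainder $\mathbf{O}(x^{k+2})$ (trivially, since the tail depends only on $x$ and is smooth).

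It remains to bound $G(u) - G(P_k)$. Using the linearity of $F$ in its last three arguments and \eqref{eq-F1},
\begin{equation*}
G(u) - G(P_k) = x^{-1}\varphi^{\bar\beta\alpha}(R_k)_{\alpha\bar\beta} - (2x)^{-2}(R_k)_{\theta\theta} - (n+1)F(R_k) - (n+1)\bigl[Q_h(P_k+R_k)-Q_h(P_k)\bigr].
\end{equation*}
The first three terms involve at least one smooth-chart derivative of $R_k$ in a $z_\alpha$, $\bar z_\beta$, or $\theta$ direction, and are therefore $\mathbf{O}(x^\infty)$ by Definition~\ref{defn-boldO}\eqref{eq:fatO2}. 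For the last term, write $Q_h = \log\det(I+H) - \mathrm{tr}(H)$ with $H := g^{-1}u''$ the Hessian ratio in the $(z',z_n)$-chart; a direct computation using \eqref{eq-partial} and \eqref{eq-gij}--\eqref{eq-ginverse} gives $|H(P_k)|_{\omega_h} = \mathbf{O}(x)$ (dominated by the $C_1 x$ contribution) and $|H(R_k)|_{\omega_h} = \mathbf{O}(x^{k+\epsilon})$. Expanding the power series $Q_h$ and collecting the multilinear pieces in $H(P_k), H(R_k)$, the leading cross term (from the quadratic part of $Q_h$) is $\mathbf{O}(x \cdot x^{k+\epsilon}) = \mathbf{O}(x^{k+1+\epsilon})$, while the leading purely-$R_k$ term is $\mathbf{O}(x^{2(k+\epsilon)})$. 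Both exceed $x^{k+1}$ when $k \geq 1$, and also when $k = 0$ provided $\epsilon > \tfrac{1}{2}$, which is precisely the hypothesis. Combining everything delivers the required expansion of $G$ of order $x^{k+1}$.

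The main obstacle is the quadratic-in-$R_k$ piece of $Q_h$: this is what forces the threshold $\epsilon > \tfrac{1}{2}$ at $k = 0$, since a smaller $\epsilon$ would leave an $\mathbf{O}(x^{2\epsilon})$ error of order $\leq x$, comparable to the resonant ODE mode and fatal to the bootstrap argument that will drive Theorem~\ref{thm:expansion-powers}. A secondary technical nuisance is that the pointwise bound $|H(R_k)|_{\omega_h} = \mathbf{O}(x^{k+\epsilon})$ does not follow trivially from $R_k = \mathbf{O}(x^{k+\epsilon})$: the change of coordinates \eqref{eq-partial} feeds $\partial_x$-derivatives of $R_k$ (which decay only like $x^{k+\epsilon - c}$ in the smooth chart, not as $\mathbf{O}(x^\infty)$) into the $(z',z_n)$-Hessian, and these finite-order contributions must then pair with the diverging components $g^{\bar\beta\alpha} \sim x^{-1}$ and $g^{\bar n n} \sim x^{-2}|z_n|^2$ of the inverse metric so as to restore the correct order. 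The cleanest implementation is to work in the quasi-coordinate chart \eqref{eq-quasi-coor}, in which $\omega_h$ is uniformly $C^\infty$-equivalent to the Euclidean metric and the $\mathbf{O}(x^{k+\epsilon})$ bound on $R_k$ transports directly to its covariant Hessian.
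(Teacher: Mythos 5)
Your overall strategy mirrors the paper's: peel off the linear tangential contributions to $G$ as $\mathbf{O}(x^\infty)$, and handle $Q_h$ by splitting off the polynomial part and using that $Q_h$ starts at quadratic order, which yields the threshold $\min\{k+1+\epsilon,\,2k+2\epsilon\}$ and explains the hypothesis $\epsilon>\frac{1}{2}$ at $k=0$. The issue you flag as a "secondary technical nuisance" (transporting $R_k=\mathbf{O}(x^{k+\epsilon})$ to its holomorphic Hessian through \eqref{eq-partial}) is real and is correctly resolved by quasi-coordinates; this gives the radial-derivative half \eqref{eq:fatO1} of the required bound on the remainder.

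There is, however, a gap: you never verify condition \eqref{eq:fatO2} of Definition \ref{defn-boldO} for the nonlinear remainder, and your termwise multilinear decomposition makes this genuinely problematic. The conclusion that $G$ has an expansion of order $x^{k+1}$ requires the remainder to be $\mathbf{O}(x^{k+1+\epsilon'})$ in the bold sense, so every derivative containing at least one $x_\alpha$-, $y_\alpha$- or $\theta$-direction must be $O(x^N)$ for all $N$. For a piece such as $\mathrm{tr}\bigl(H(P_k)H(R_k)\bigr)$ this fails term by term: a tangential derivative can land on the $z'$-dependent coefficients inside $H(P_k)$ (the factors $\varphi_\alpha$, $\varphi_{\alpha\bar\beta}$ appearing in the Hessian of a radial function, cf.\ the proof of Lemma \ref{lem-PDE-ODE}) or inside the radial part of $H(R_k)$, producing contributions that are only $O(x^{k+1+\epsilon})$ rather than $O(x^\infty)$. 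These contributions cancel in the sum, but seeing the cancellation needs an extra structural input, which is precisely the device the paper uses: first write $R_k=\rho_k(x)+\mathbf{O}(x^\infty)$ with $\rho_k$ the average of $R_k$ over the level sets $\{x=t\}$, and then apply the closed-form radial formula of Lemma \ref{lem-PDE-ODE} to the radial function $P_k+\rho_k$ as a whole (equation \eqref{eq:cristo}), so that the entire non-negligible part of $Q_h(u)$ is manifestly a function of $x$ alone and \eqref{eq:fatO2} holds for it trivially. (Equivalently, one can check that for radial $\psi$ the matrix $g^{-1}\psi_{i\bar j}$ has eigenvalues $\frac{x\psi'}{n+1}$ and $\frac{x(x\psi)''}{n+1}$ with complementary spectral projections independent of $\psi$, so every trace monomial in $H(P_k)$, $H(\rho_k)$ is a function of $x$ alone -- but some such argument must be supplied.) Once this radialization step is inserted, your estimates go through and the two proofs coincide.
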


\begin{proof}
Those contributions to $G$ that are linear in $u$ can be written in terms of tangential derivatives of $u$. Thus, by Lemma \ref{lem-infinity}, they are $\mathbf{O}(x^\infty)$, i.e., $\mathbf{O}(x^N)$ for any given $N \in \N$. In particular, they have an expansion to any given order, and the polynomial part of this expansion is zero.

It remains to discuss the term $Q_h(u)$. For this we need to use our assumption that $u = \psi_k + R_k$, where $\psi_k$ is a degree $k$ polynomial in $x$ and the remainder $R_k$ is $\mathbf{O}(x^{k+\epsilon})$. We first observe that 
\begin{align}\label{eq:redentor}
R_k = \rho_k(x) + \mathbf{O}(x^\infty),
\end{align}
 where $\rho_k(x) = \mathbf{O}(x^{k+\epsilon})$ is a smooth radial function. Indeed, for $0<t\ll 1$ we can simply define $\rho_k(t)$ to be the average of $R_k$ over the level set $\{x = t\}$ with respect to the restriction of the Riemannian metric associated with $\omega_h$. Then the properties $\rho_k(x) = \mathbf{O}(x^{k+\epsilon})$ and \eqref{eq:redentor} follow from the fact that $R_k = \mathbf{O}(x^{k+\epsilon})$, which implies in particular that all tangential derivatives of $R_k$ are $\mathbf{O}(x^\infty)$.

Now, also by definition,
\begin{align}
Q_h(u) = \log\left(\frac{(\omega_h + \ddbar u)^n}{\omega_h^n}\right) - \frac{d}{dt}\biggr|_{t=0} \log \left( \frac{(\omega_h + t\ddbar u)^n}{\omega_h^n} \right).
\end{align}
Writing $u = \psi_k + \rho_k + \mathbf{O}(x^\infty)$ in this formula, using Lemma \ref{lem-PDE-ODE}, and defining
\begin{align}\label{eq:quadraticterms}
\mathbf{q}(t) := \log\left(1+\frac{t}{n+1}\right) - \frac{t}{n+1},
\end{align}
we obtain that
\begin{align}\label{eq:cristo}
Q_h(u) = (n-1)\mathbf{q}(x\psi_k'+x\rho_k') + \mathbf{q}(x(x\psi_k)'' + x(x\rho_k)'') + \mathbf{O}(x^\infty).
\end{align}
From this we can now deduce that $Q_h(u)$ has an expansion of order $x^{k+1}$.

Indeed, $\mathbf{q}$ is a convergent power series without constant or linear terms. Thus, if we expand \eqref{eq:cristo} as a convergent power series in the four variables $a = x\psi_k'$, $b = x\rho_k'$, $c = x(x\psi_k)''$ and $d = x(x\rho_k)''$, then each monomial will be divisible by $a^2$, $ab$, $b^2$, $c^2$, $cd$ or $d^2$. Now observe that $a,c$ are polynomials in $x$ without constant terms while $b,d$ are smooth functions of $x$ which are $\mathbf{O}(x^{k+\epsilon})$. Thus, all monomials in $a,b,c,d$ that appear in the expansion of \eqref{eq:cristo} are either polynomials in $x$ or $\mathbf{O}(x^{\min\{1+k+\epsilon,2k+2\epsilon\}})$ smooth functions of $x$. Because $\epsilon > \frac{1}{2}$ if $k = 0$ and because the sum of these monomials converges as a power series in $a,b,c,d$, it follows that $Q_h(u)$ has an expansion of order $x^{k+1}$, as desired.
\end{proof}

Since we know from Lemma \ref{lem-infinity} that $u$ has an expansion of order $x^0$ with $\epsilon = 1$, Lemma \ref{lem-UF} tells us that $G$ has an expansion of order $x^1$. More precisely,
$G = \mathbf{O}(x^2)$ by the proof of Lemma \ref{lem-UF}.

The following lemma is analogous to but much simpler than \cite[Proposition 5.1]{JiangShi}.

\begin{lemma}\label{lem-FU}
Let $\ell \in\N_{\geq 1}$.
If $G$ has an expansion of order $x^{\ell}$, then $u$ has an expansion of order $x^\ell$. 
\end{lemma}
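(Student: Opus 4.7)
The plan is to reduce \eqref{eq-ODE} to a family of Euler ODEs in the radial direction and solve them explicitly. Write $G = \sum_{i=0}^\ell C_i x^i + R_\ell$ with $R_\ell = \mathbf{O}(x^{\ell+\epsilon})$ for some $\epsilon > 0$; because $G = \mathbf{O}(x^2)$ (as noted after the proof of Lemma \ref{lem-UF}), we have $C_0 = C_1 = 0$ automatically. Set $L_0 := x^2\partial_x^2 + (n+1)x\partial_x - (n+1)$, which is the left-hand side of \eqref{eq-ODE}, and observe $L_0(x^i) = (i-1)(i+n+1)x^i$. Since this factor is nonzero for $i\geq 2$,
\[ \psi_0(x) := \sum_{i=2}^\ell \frac{C_i}{(i-1)(i+n+1)}\,x^i \]
is a polynomial in $x$ with constant coefficients satisfying $L_0\psi_0 = \sum_{i=2}^\ell C_i x^i$, and thus $v := u - \psi_0$ satisfies $L_0 v = R_\ell$. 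We read this as a family of second-order ODEs in $x$ parametrized by the non-radial variables $p = (x_\alpha,y_\alpha,\theta)$. The indicial roots of $L_0$ are $1$ and $-(n+1)$, so variation of parameters with integration from $0$ produces the particular solution
\[ v_p(x,p) := \frac{1}{n+2}\left[-x^{-(n+1)}\int_0^x s^n R_\ell(s,p)\,ds + x\int_0^x \frac{R_\ell(s,p)}{s^2}\,ds\right]. \]

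Since $\ell+\epsilon > 1$, both integrals converge absolutely, and $|R_\ell|\leq Cx^{\ell+\epsilon}$ gives $|v_p|\leq C'x^{\ell+\epsilon}$. Differentiating the formula in $x$ produces a telescoping cancellation of boundary terms that propagates the same bound to $x^c\partial_x^c v_p$ for every $c$. All tangential derivatives of $R_\ell$ are already $\mathbf{O}(x^\infty)$ by Definition \ref{defn-boldO}, so by the same integral formula the tangential derivatives of $v_p$ are also $\mathbf{O}(x^\infty)$. Hence $v_p = \mathbf{O}(x^{\ell+\epsilon})$ in the strong sense of Definition \ref{defn-boldO}.

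Because $v = \mathbf{O}(x)$ by Lemma \ref{lem-infinity}, the blow-up mode $x^{-(n+1)}$ carries vanishing coefficient, so $v(x,p) = a(p)\,x + v_p(x,p)$ for some scalar $a(p)$. The crucial observation is that $a$ is in fact a constant: since $\psi_0$ is radial, Lemmas \ref{lem-infinity} and \ref{lem:circle} applied to $v$ give that every tangential derivative of $v$ is $\mathbf{O}(x^\infty)$, hence $v(x,p) - v(x,q) = \mathbf{O}(x^\infty)$ by integrating along a path between any two points $p,q$ in the compact cross-section; the same is true for $v_p$ by the previous paragraph; dividing by $x$ and letting $x \to 0^+$ gives $a(p) = a(q)$. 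Therefore
\[ u = \psi_0(x) + a\,x + v_p = \sum_{i=1}^\ell c_i\,x^i + \mathbf{O}(x^{\ell+\epsilon}), \]
with $c_1 := a$ and $c_i := C_i/[(i-1)(i+n+1)]$ for $i\geq 2$ all constants, which is the claimed expansion of order $x^\ell$.

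The main technical point is verifying the strong $\mathbf{O}$-bound on $v_p$ with all mixed $(\partial_{x_\alpha}^a,\partial_{y_\alpha}^b,\partial_x^c,\partial_\theta^d)$ derivatives: for the radial derivatives this relies on the cancellation between boundary contributions of the two integrals, while for the tangential derivatives it relies on the infinite-order tangential decay of $R_\ell$ already built into Definition \ref{defn-boldO}. The other delicate step is promoting the pointwise-in-$p$ integration constant $a(p)$ to an honest constant, which is exactly where the near-radiality of $u$ established in Section \ref{ss:prelim-ests} enters.
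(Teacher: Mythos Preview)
Your proof is correct and follows essentially the same approach as the paper's: both solve the Euler ODE $L_0 u = G$ via variation of parameters with the fundamental pair $x, x^{-(n+1)}$, eliminate the blow-up mode using $u = \mathbf{O}(x)$, and promote the coefficient of $x$ to a genuine constant using the fact (from Lemma \ref{lem-infinity}) that all tangential derivatives are $\mathbf{O}(x^\infty)$. The only organizational difference is that you first subtract an explicit polynomial $\psi_0$ and then integrate the remainder from $0$, whereas the paper keeps $G$ whole, writes a single representation formula involving $u(x_0)$ and integrals over $(0,x_0]$, and analyzes the polynomial and remainder contributions to that formula separately; this is a matter of presentation, not substance.
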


\begin{proof}
Fix any chart. Fix an arbitrary point $(x_\alpha,y_\alpha,\theta) \in D \times S^1$ and let $0 < x_0 \ll 1$. View \eqref{eq-ODE} as a second-order ODE for $u(x_\alpha,y_\alpha,x,\theta)$ with initial values at $x = x_0$. The corresponding homogeneous ODE has the two linearly independent solutions $x$ and $x^{-n-1}$. Using variation of parameters and the fact that $u = O(1)$ and $G = O(x^{1+\epsilon})$ for some $\epsilon>0$, we derive that
\begin{align}\begin{split}
\label{eq-Sln-lambda0}
u(x) = \left( \frac{u(x_0)}{x_0} +\frac{x_0^{-n-2}}{n+2} \int_0^{x_0} t^{n} G(t) \,dt \right) x - \frac{x^{-n-1}}{n+2}  \int_0^{x} t^{n} G(t) \,dt -  \frac{x}{n+2} \int_x^{x_0} t^{-2} G(t)\,dt.
\end{split}
\end{align}
Evaluation at $(x_\alpha,y_\alpha,\theta)$ is implicit in this formula. 

If $G$ is a degree $\ell$ polynomial in $x$ without constant or linear terms, then \eqref{eq-Sln-lambda0} implies that $u$ is a degree $\ell$ polynomial in $x$ without constant terms.

If $G = \mathbf{O}(x^{\ell+\epsilon})$, then we can deduce from \eqref{eq-Sln-lambda0} that
\begin{align}\label{eq:step666}
u(x) = \left(\frac{u(x_0)}{x_0} +  \frac{x_0^{-n-2}}{n+2}\int_0^{x_0} t^n G(t) \, dt - \frac{1}{n+2}\int_0^{x_0} t^{-2}G(t)\,dt \right)x + \mathbf{O}(x^{\ell + \epsilon})
\end{align}
by splitting the last integral in \eqref{eq-Sln-lambda0} into an integral from $0$ to $x_0$ and an integral from $0$ to $x$. The term in parentheses in \eqref{eq:step666} is by definition independent of $x$, and all of its tangential derivatives are $\mathbf{O}(x^\infty)$ by Lemma \ref{lem-infinity}. Thus, this term is constant, and so $u = const \cdot x + \mathbf{O}(x^{\ell+\epsilon})$.
\end{proof}

Iterating Lemmas \ref{lem-UF}--\ref{lem-FU}, we obtain that $u$ has an expansion to any order, i.e., for all $k \in \N_0$ there exists a degree $k$ polynomial $\psi_k(x)$ without a constant term such that
$u = \psi_k(x) +  \mathbf{O}(x^{k+\epsilon})$ for some $\epsilon>0$. In fact, the above arguments show that this holds for $\epsilon=1$. Moreover, the polynomials $\psi_{k}(x)$ obviously stabilize in the sense that $\psi_{k}(x) = \psi_{k-1}(x) + C_{k}x^{k}$ for all $k \in \N_{\geq 1}$. Thus, we may pass to a limit $\psi = \lim_{k} \psi_k$ in the ring of all formal power series in $x$ without constant terms.

By construction, $\psi$ is a formal power series solution to the complex Monge-Amp\`ere equation \eqref{eq-main}. Equivalently, by \eqref{eq-ODE}--\eqref{eq-F1} and \eqref{eq:cristo}, $\psi$ formally solves the equation
\begin{align}\label{eq:myfavorite}
x^2 \psi'' +(n+1) x\psi' - (n+1)\psi = -(n+1)[(n-1)\mathbf{q}(x\psi') + \mathbf{q}(x(x\psi)'')],
\end{align}
where $\mathbf{q}(t)$ as in \eqref{eq:quadraticterms} is a convergent power series in $t$ without constant or linear terms. Any formal solution $\psi(x) = C_1 x + C_2 x^2 + \cdots$ to \eqref{eq:myfavorite} is uniquely determined by its leading coefficient $C_1$. Indeed, comparing terms of order $x^k$ in \eqref{eq:myfavorite} yields a coefficient $(k-1)(k+n+1)C_k$ on the left side and some polynomial in $C_1, \ldots, C_{k-1}$ on the right. For $k = 1$ this is a tautology,  and for $k \geq 2$ we can solve this equation uniquely for $C_k$ in terms of $C_1,\ldots,C_{k-1}$. However, instead of actually solving this system of equations, it is simpler to observe that if $C_1 = -(n+1)c$, then the function $-(n+1) \log (1+ cx )$ has the right leading term and solves \eqref{eq:myfavorite}, so that $\psi(x) = -(n+1)\log(1+cx)$. 

Theorem \ref{thm:expansion-powers} has been proved.

\section{Improvement of the remainder}\label{sec:exp-decay}

In this section, we complete the proof of the Main Theorem by improving the remainder $O(x^\infty)$ of Theorem \ref{thm:expansion-powers} to the decay rate of the next generation of solutions of the homogeneous linearized PDE. This is done using a continuity argument. For this we first construct a right inverse to the model linear operator, which leads to a useful representation formula (Proposition \ref{prop:rep}). As in Section \ref{sec:expansion-powers}, the proof of this uses ODE arguments, although unlike in Section \ref{sec:expansion-powers} we now need to work globally on the cross- section and consider the spectrum of the Laplacian (but it suffices to use $S^1$-invariant eigenfunctions). Using this formula, we then carry out the continuity argument in Proposition \ref{prop:key-prop}.

\subsection{A representation formula}\label{sec-sln-ODE}

Assume the same setup as in the Main Theorem. By Theorem \ref{thm:expansion-powers}, there exists a $c \in \R$ such that if we define $\psi(x) := -(n+1)\log(1+cx)$, then the function $u^* := u - \psi$ satisfies $u^* = O(x^\infty)$ along with all of its $\omega_h$-derivatives. Recall that we had originally defined $\psi$ to be the unique formal power series solution to $(\omega_h + i\partial\overline{\partial}\psi)^n = e^{\psi}\omega_h^n$ whose leading coefficient agrees with the Green's function coefficient of an asymptotic expansion of $u$. Using $\omega_h^* := \omega_h + \ddbar\psi$ as a reference metric instead of $\omega_h$, we can therefore rewrite our equation as
\begin{align}\label{eq-main-star}
(\omega_h^* + i\partial\overline{\partial} u^*)^n = e^{u^*} (\omega_h^*)^n.
\end{align}
A crucial observation at this point is that $\omega_h^* = \omega_{h^*}$ for $h^* = e^{-c}h$. The radial coordinate associated with this new Hermitian metric is $x^* = (x^{-1}+c)^{-1}$. Passing from $\omega_h, x$ to $\omega_{h^*}, x^*$ makes no difference to the statement of the Main Theorem. Thus, we can assume without loss that $c = 0$. In fact:

\begin{reduction}\label{thm-main-red}
To prove the Main Theorem, it is enough to prove that 
\begin{equation}\label{eq:thm-main-red}
\text{if}\;\,(\omega_h + \ddbar u)^n = e^u \omega_h^n\;\,\text{and}\;\,|u| = O(x^\infty),\,\text{then}\;\,|u| = O\left(x^{-\frac{n}{2}+\frac{1}{4}}e^{-\frac{2\sqrt{\lambda_1}}{\sqrt{x}}}\right).
\end{equation}
\end{reduction}

The desired derivative decay follows from standard elliptic regularity applied on the universal covers of $\omega_h$-geodesic balls of radius $\epsilon\sqrt{t}$ centered at points of height $x = t$ for some small but fixed $\epsilon>0$. The point is that $\sqrt{t}$ is the biggest order of magnitude such that functions of the form $e^{-\delta/\sqrt{x}}$ vary by bounded factors over such balls. At the same time, these balls have fundamental group $\mathbb{Z}$, and balls of radius $\epsilon\sqrt{t}$ in their universal covers satisfy uniform geometry bounds at their own scale.

We now go back to the ODE version \eqref{eq-ODE} of the Monge-Amp\`ere equation, i.e., 
\begin{align}\label{eq-ODE-19}
\begin{split}
x^2 u_{xx} +(n+1) xu_x - (n+1) u = G,\\
G = x^{-1}\varphi^{\bar \beta \alpha} u_{\alpha \bar \beta} -(2x)^{-2} u_{\theta\theta} - (n+1)(F + Q_h(u)),
\end{split}
\end{align}
where $F$ is smooth in $x_\alpha,y_\alpha$ and linear homogeneous in $x^{-1} u_{\theta\alpha}, x^{-1} u_{\theta\bar\alpha},x^{-1} u_{\theta\theta}$ and where $Q_h(u)$ is a power series in $\ddbar u$ without constant or linear terms. We move the Laplacian in the $D$-directions out of the inhomogeneity, $G$, and average with respect to $\theta \in [0,2\pi]$. This yields
\begin{align}\label{eq-PDE}
\begin{split}
x^2\mathring{u}_{xx} +(n+1) x\mathring{u}_x -(n+1) \mathring{u}  -x^{-1}\varphi^{\bar \beta \alpha} \mathring{u}_{\alpha \bar \beta}= \mathring{G},\\
\mathring{u} :=\frac{1}{2\pi}\int_{0}^{2\pi} u\, d\theta, \;\,\mathring{G} := -(n+1)\frac{1}{2\pi}\int_{0}^{2\pi} Q_h(u) \, d\theta.
\end{split}
\end{align}
Crucially, $\mathring{u}$ and $\mathring{G}$ are globally well-defined on $(0,\delta] \times D$ for some $\delta>0$, not just chart by chart. This is because  the operator $Q_h$ is global and, for all global functions $v$, the integral $\int_0^{2\pi} v\,d\theta$ in each chart can be identified with the $S^1$-invariant global function $\int_{S^1} g^*v\,dg$, where $(g^*v)(p) = v(g \cdot p)$ (using the given global $S^1$-action) and where $dg$ denotes the standard measure on $S^1$. Another key point is that $|u - \mathring{u}| = O(e^{-\epsilon/x})$ by Lemma \ref{lem:circle}, so $u$ satisfies the estimates in \eqref{eq:thm-main-red} if and only if $\mathring{u}$ does.

Let $\lambda_\ell, \varphi_\ell$ denote the eigenvalues and $L^2$-orthonormalized eigenfunctions of the operator $\varphi^{\bar \beta \alpha} \partial_\alpha \partial_{\bar \beta}$ on $D$, where $\ell \in \N_0$ and $0 = \lambda_0 < \lambda_1 \leq \lambda_2 \leq \ldots$ We will use the Fourier decompositions
\begin{align}\label{eq:fourier}
\mathring{u} = \sum_{\ell=0}^\infty \mathring{u}_\ell(x)\varphi_\ell, \;\, \mathring{G} = \sum_{\ell=0}^\infty \mathring{G}_{\ell}(x) \varphi_{\ell},
\end{align}
which converge in $C^\infty_{\rm loc}$ as usual. Then \eqref{eq-PDE} decomposes into the ODEs
\begin{align}
x^2 \mathring{u}_{\ell}''(x)+(n+1)x \mathring{u}_{\ell}'(x)-(n+1) \mathring{u}_{\ell}(x) - \lambda_\ell x^{-1} \mathring{u}_{\ell}(x) = \mathring{G}_{\ell}(x).
\end{align}
We already treated the case $\ell = 0$ in Section \ref{sec:expansion-powers}, although in Section \ref{sec:expansion-powers} we were able to work pointwise on the cross-section with no need to integrate in any direction.

Denote $\alpha = n+2$. For all $\ell \in \N_{\geq 1}$ denote
\begin{align}\label{eq-H1l-H2l}
H_{1,\ell}(x) = {\frac {{1}}{x^{n/2}}{{\sl I}_{\alpha}\left(2
\,  {\frac {\sqrt {\lambda_{\ell}}}{\sqrt {x}}}\right)}}, \,\, H_{2,\ell}(x) = {\frac {{ 1}}{x^{n/2}}{{\sl K}_{\alpha}\left(2\,{\frac {\sqrt {\lambda_{\ell}}}{\sqrt {x}}}\right)}},
\end{align}
where $I_\alpha(s)$ is the modified Bessel function of the first kind and $K_\alpha(s)$ is the modified Bessel function of the second kind. It is known that as $s \to \infty$ there exist asymptotic expansions
\begin{align}
I_\alpha(s) &=\frac{e^s}{\sqrt{2\pi s}} \left(1 -\frac{4\alpha^2-1}{8 s} +  \frac{(4\alpha^2-1)(4\alpha^2-9)}{2!  (8 s)^2} - \frac{(4\alpha^2-1)(4\alpha^2-9)(4\alpha^2-25)}{3!  (8 s)^3}+\cdots   \right),\label{eq:bessel1}\\
K_\alpha(s) &=\sqrt{\frac{\pi}{2s}}e^{-s} \left(1 +\frac{4\alpha^2-1}{8 s} +  \frac{(4\alpha^2-1)(4\alpha^2-9)}{2!  (8 s)^2} + \frac{(4\alpha^2-1)(4\alpha^2-9)(4\alpha^2-25)}{3!  (8 s)^3}+\cdots   \right).\label{eq:bessel2}
\end{align}
In particular, if $\alpha = n+2$ is fixed, then for all $\epsilon \in (0,1)$ there exists a universal $s_\epsilon > 0$ depending only on $\epsilon$ such that the two factors in parentheses in \eqref{eq:bessel1}--\eqref{eq:bessel2} are in the interval $[1-\epsilon,1+\epsilon]$ for $s \geq s_\epsilon$. This remark is useful because it provides uniform estimates for $H_{1,\ell}(x)$ and $H_{2,\ell}(x)$ as $x \to 0^+$, where the constants depend on $\ell$ 
but the estimates hold for all $x \in (0,\delta]$ with $\delta$ independent of $\ell$. 

We are now able to state the main result of this section. A similar representation formula was used in a different setting in \cite[Section 6]{KK}. We will say more about this at the end of the paper. 

\begin{proposition}\label{prop:rep}
Assume only that $u = \mathbf{O}(x^M)$ as $x \to 0^+$ for some $M > 1$ rather than $|u| = {O}(x^\infty)$. Denote the restriction of a function $v$ on the total space to the slice $\{x = t\}$ by $v(t)$. Fix $0 < x_0 \ll 1$. Then, with all of the infinite sums converging in $C^0_{\rm loc}$, it holds on $(0,x_0] \times D$ that
\begin{align}\begin{split}\label{eq-Soln-Bessel-3}
 \mathring{u}(x) &=
- \frac{x^{-n-1}}{n+2}  \int_0^{x} t^{n} \mathring{G}_0(t)\, dt +  \frac{x}{n+2} \int_0^{x} t^{-2} \mathring{G}_0(t)\,dt\\
&+ \sum_{\ell=1}^\infty  \left( \frac{u_{\ell}(x_0)}{H_{2,\ell}(x_0)} +\frac{2H_{1,\ell}(x_0)}{H_{2,\ell}(x_0)} \int_0^{x_0} t^{n-1} H_{2,\ell}(t) \mathring{G}_{\ell}(t)\,  dt \right) H_{2,\ell}(x)\varphi_\ell\\
&-  \sum_{\ell=1}^\infty\left( 2H_{1,\ell}(x) \int_0^{x} t^{n-1}H_{2,\ell}(t) \mathring{G}_{\ell}(t)\, dt \right)\varphi_\ell -  \sum_{\ell=1}^\infty \left( 2H_{2,\ell}(x) \int_x^{x_0} t^{n-1} H_{1,\ell}(t) \mathring{G}_{\ell}(t)\,
dt \right)\varphi_\ell.
\end{split}
\end{align}
\end{proposition}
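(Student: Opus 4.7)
The plan is to exploit the global $S^1$-symmetry that has already been used to reduce \eqref{eq-ODE-19} to \eqref{eq-PDE}, and then diagonalize the spatial operator $\varphi^{\bar\beta\alpha}\partial_\alpha\partial_{\bar\beta}$ by Fourier expansion in its eigenbasis $\{\varphi_\ell\}$ on $D$. Expanding both $\mathring{u}$ and $\mathring{G}$ as in \eqref{eq:fourier} reduces \eqref{eq-PDE} to the decoupled family of second-order linear ODEs
\begin{equation*}
x^2\mathring{u}_\ell''(x)+(n+1)x\mathring{u}_\ell'(x)-(n+1)\mathring{u}_\ell(x)-\lambda_\ell x^{-1}\mathring{u}_\ell(x)=\mathring{G}_\ell(x),
\end{equation*}
so the task reduces to: (i) producing two linearly independent homogeneous solutions for each $\ell$, (ii) writing the general solution via variation of parameters with lower limits of integration chosen to respect the decay/boundary conditions, and (iii) pinning down the two free constants by requiring polynomial decay at $x=0$ and agreement with the Fourier coefficient $\mathring{u}_\ell(x_0)$ of the given Dirichlet-type datum at the inner boundary.

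For $\ell=0$ the homogeneous solutions are $x$ and $x^{-n-1}$, with Wronskian $-(n+2)x^{-n-1}$. Variation of parameters, with both lower limits taken to be $0$, produces exactly the first line of \eqref{eq-Soln-Bessel-3}. The hypothesis $u=\mathbf{O}(x^M)$ with $M>1$ implies $\mathring{u}_0=O(x^M)$, and via the ODE together with the derivative bounds built into Definition \ref{defn-boldO} also $\mathring{G}_0=O(x^M)$; this makes both integrals convergent at $t=0$ and simultaneously rules out the homogeneous contributions $C_2 x^{-n-1}$ (by boundedness of $\mathring{u}_0$) and $C_1 x$ (since $M>1$).

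For $\ell\geq 1$, the substitution $s=2\sqrt{\lambda_\ell}/\sqrt{x}$ and $y(x)=x^{-n/2}Z(s)$ transforms the homogeneous equation into the modified Bessel equation $s^2Z''+sZ'-(s^2+\alpha^2)Z=0$ with $\alpha=n+2$, so $H_{1,\ell}$ and $H_{2,\ell}$ from \eqref{eq-H1l-H2l} are independent homogeneous solutions. The classical Wronskian identity $I_\alpha(s)K_\alpha'(s)-I_\alpha'(s)K_\alpha(s)=-1/s$ and a short chain-rule computation yield $H_{1,\ell}H_{2,\ell}'-H_{1,\ell}'H_{2,\ell}=\tfrac{1}{2}x^{-n-1}$. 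Variation of parameters then gives $\mathring{u}_\ell=C_{1,\ell}H_{1,\ell}+C_{2,\ell}H_{2,\ell}+y_{p,\ell}$ with
\begin{equation*}
y_{p,\ell}(x)=-2H_{1,\ell}(x)\int_0^{x}t^{n-1}H_{2,\ell}(t)\mathring{G}_\ell(t)\,dt-2H_{2,\ell}(x)\int_x^{x_0}t^{n-1}H_{1,\ell}(t)\mathring{G}_\ell(t)\,dt,
\end{equation*}
where the lower limit $0$ is allowed because $H_{2,\ell}(t)$ is exponentially small as $t\to 0^+$, while the lower limit $x_0$ is forced because $H_{1,\ell}(t)$ is exponentially large there. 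The polynomial decay of $\mathring{u}_\ell$, combined with a Laplace-method estimate showing that $H_{1,\ell}(x)\int_0^x t^{n-1}H_{2,\ell}(t)\mathring{G}_\ell(t)\,dt$ is itself polynomially decaying (the exponential factors cancel to leading order near $t=x$), forces $C_{1,\ell}=0$; evaluating $\mathring{u}_\ell(x_0)=C_{2,\ell}H_{2,\ell}(x_0)+y_{p,\ell}(x_0)$ then yields exactly the coefficient of $H_{2,\ell}(x)$ displayed in \eqref{eq-Soln-Bessel-3}.

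Convergence of the resulting sum in $C^0_{\rm loc}$ is automatic: $\mathring{u}$ is smooth on $(0,x_0]\times D$, so for each fixed $x$ its Fourier series in $\{\varphi_\ell\}$ converges in $C^\infty(D)$ locally uniformly in $x$, and we have just identified every Fourier coefficient $\mathring{u}_\ell(x)$ with the corresponding modewise expression produced by variation of parameters. The main obstacle I anticipate is the bookkeeping in steps (ii) and (iii): getting the Wronskian normalization, the sign conventions, and the choices of lower limits of integration simultaneously consistent with convergence of all the integrals, and rigorously ruling out the exponentially growing $H_{1,\ell}$-mode despite its explicit appearance inside $y_{p,\ell}$.
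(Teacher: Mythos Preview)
Your approach is essentially the paper's: Fourier decompose, identify the Bessel-type homogeneous solutions, apply variation of parameters, and pin down the free constants from the decay at $x=0$ and the value at $x=x_0$. The $\ell=0$ case and the Wronskian computation are correct, and your organizational choice of building the lower limits $0$ and $x_0$ directly into $y_{p,\ell}$ is equivalent to the paper's choice of integrating from $x$ to $x_0$ and then solving for the constants.

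There is one genuine gap. Your convergence argument only shows that $\sum_{\ell\geq 1}\mathring u_\ell(x)\varphi_\ell$ converges in $C^0_{\rm loc}$, because this is the Fourier series of the smooth function $\mathring u - \mathring u_0$. But the proposition asserts that each of the \emph{three} displayed infinite sums converges separately, and this separate convergence is exactly what is used downstream, where the three sums are estimated one at a time. Knowing that $\sum(A_\ell+B_\ell+C_\ell)\varphi_\ell$ converges does not imply that $\sum A_\ell\varphi_\ell$, $\sum B_\ell\varphi_\ell$, $\sum C_\ell\varphi_\ell$ converge individually. The paper handles this by an explicit Claim: using $\|\varphi_\ell\|_{L^\infty}\leq C\ell^{1/2}$ together with the fact that $u=\mathbf{O}(x^M)$ forces all tangential derivatives of $\mathring u$ and $\mathring G$ to be $O(x^\infty)$, hence $|\mathring u_\ell(x)|+|\mathring G_\ell(x)|\leq C_{N_1,N_2}\ell^{-N_1}x^{N_2}$, and then bounding each of the three Bessel-weighted sums directly via the asymptotics \eqref{eq:bessel1}--\eqref{eq:bessel2}. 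You will need some version of this.

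A smaller point: to force $C_{1,\ell}=0$ you must check that \emph{both} terms of $y_{p,\ell}$ are $o(H_{1,\ell})$ as $x\to 0^+$, not just the first. The second term $H_{2,\ell}(x)\int_x^{x_0}t^{n-1}H_{1,\ell}(t)\mathring G_\ell(t)\,dt$ is an $\infty\cdot 0$ indeterminate; the paper resolves it by L'H\^opital, and your Laplace-type heuristic (the integral is dominated near $t=x$, where the exponentials cancel against $H_{2,\ell}(x)$) works too, but it needs to be stated.
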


\begin{proof}
Consider an ODE of the general form
\begin{align}\label{eq-u}
x^2 v''(x) + (n+1) x v'(x) -(n+1)v(x) - \lambda x^{-1}  v(x)=f(x)
\end{align}
for some constant $\lambda>0$.
A function of the form $v(x) = x^{-n/2} F(2\sqrt{\lambda/x})$ is a homogeneous solution of this ODE if and only if $F(s)$ satisfies the modified Bessel equation
\begin{align}
s^2 F''(s) + s F'(s) -(\alpha^2+s^2 )F(s) = 0
\end{align}
for $\alpha =  n+2$.
Thus, the two homogeneous solutions of \eqref{eq-u}  are
\begin{align}
H_1(x) = {\frac {{1}}{x^{n/2}}{{\sl I}_{\alpha}\left(2
\,  {\frac {\sqrt {\lambda}}{\sqrt {x}}}\right)}}, \,\, H_2(x) = {\frac {{ 1}}{x^{n/2}}{{\sl K}_{\alpha}\left(2\,{\frac {\sqrt {\lambda}}{\sqrt {x}}}\right)}}
\end{align}
as above.
By Abel's formula and by \eqref{eq:bessel1}--\eqref{eq:bessel2},
\begin{align}\label{eq-abel}
I_\alpha(s) K_\alpha'(s) - I_\alpha'(s) K_\alpha(s) = -\frac{1}{s}.
\end{align}
By variation of parameters, the general solution to \eqref{eq-u} is 
\begin{align}\label{eq:general-soln}
v(x) = c_1 H_1(x) + c_2 H_2(x) &+ H_1(x) \int_x^{x_0} \frac{H_2(t) f(t)}{t^2 W(t)}\, dt -H_2(x) \int_x^{x_0} \frac{H_1(t) f(t)}{t^2W(t)} \, dt,
\end{align}
where thanks to \eqref{eq-abel} the Wronskian $W(t)$ is given by
\begin{align}\label{eq:wronski}
W(t) = H_1(t) H_2'(t)-H_1'(t) H_2(t) = -\sqrt{\lambda}t^{-n-\frac{3}{2}} (I_\alpha K_\alpha'- I_\alpha' K_\alpha)(2\sqrt{\lambda/t}) = \frac{1}{2t^{n+1}}.
\end{align}
Thus, \eqref{eq:general-soln} simplifies to
\begin{align}
\label{eq-Sln-u0}
v(x) = c_1 H_1(x) + c_2 H_2(x) &+ 2H_1(x) \int_x^{x_0}t^{n-1} H_2(t) f(t)\, dt - 2H_2(x)  \int_x^{x_0} t^{n-1} H_1(t) f(t) \, dt.
\end{align}
To solve out the constants $c_1, c_2$, we first take $x=x_0$, so that
\begin{align}
v(x_0) =c_1 H_1(x_0) + c_2 H_2(x_0).
\end{align}
Next we divide  \eqref{eq-Sln-u0} by $H_1(x)$ and let $x \rightarrow 0^+$. Notice that $H_1(x) \rightarrow \infty$ and $H_2(x) \rightarrow  0$ as $x\rightarrow 0^+$. We claim that if $|v| = O(1)$ and $|f| = O(1)$, then it follows that
\begin{align}\label{eq:thelimitIwant}
0 = c_1 + 2 \int_0^{x_0} t^{n-1}H_2(t) f(t)\, dt,
\end{align}
so we can solve for $c_1$ and $c_2$. This is clear except for the limit of the second integral term from \eqref{eq-Sln-u0}. To see that this is zero, we can assume that $|{\int_{x}^{x_0} t^{n-1} H_1(t) f(t)\,dt}|$ goes to $\infty$ because otherwise \eqref{eq:thelimitIwant} follows after passing to a subsequence. Then L'H\^opital's rule requires us to check that
\begin{align}
\frac{\frac{d}{dx}\int_x^{x_0} t^{n-1}H_1(t)f(t)\,dt}{\frac{d}{dx}\frac{H_1(x)}{H_2(x)}} = \frac{x^{n-1}H_1(x)f(x)}{\frac{W(x)}{H_2(x)^2}} \to 0,
\end{align}
and we can easily see that this is true by using \eqref{eq:wronski} and \eqref{eq:bessel1}--\eqref{eq:bessel2}. In sum, if $|f| = O(1)$ as $x \to 0^+$ and if $v$ solving \eqref{eq-u} remains uniformly bounded as well, then
\begin{align}\begin{split}
\label{eq-Sln-u}
v(x)  &= \left( \frac{v(x_0)}{H_2(x_0)} +\frac{2H_1(x_0)}{H_2(x_0)} \int_0^{x_0} t^{n-1} H_2(t)f(t)  \,dt \right) H_2(x)\\
&- 2H_1(x)  \int_0^{x} t^{n-1} H_2(t) f(t)\, dt - 2H_2(x)  \int_x^{x_0} t^{n-1} H_1(t) f(t)\,dt.
\end{split}
\end{align}

To apply this result to our situation, we now let $\lambda = \lambda_\ell$, $f = \mathring{G}_\ell$ and $v = \mathring{u}_\ell$ for all $\ell \geq 1$. Thanks to our assumption that $u = \mathbf{O}(x^M)$ for some $M > 1$, it is obvious that $v$ and $f$ are $O(1)$, so \eqref{eq-Sln-u} holds. In fact, only $M \geq 0$ is needed for this. For the missing case $\ell = 0$ we use the analogous representation formula \eqref{eq-Sln-lambda0}, and here we only require that $M > \frac{1}{2}$. Thus, ignoring convergence issues,
\begin{align}\begin{split}\label{eq-Soln-Bessel}
 \mathring{u}(x) &= \boxed{\left( \frac{\mathring{u}_0(x_0)}{x_0} +\frac{x_0^{-n-2}}{n+2} \int_0^{x_0} t^{n} \mathring{G}_0(t)\,  dt \right) x} \\
&- \frac{x^{-n-2}}{n+2}  \int_0^{x} t^{n} \mathring{G}_0(t)\, dt -  \boxed{\frac{x}{n+2} \int_x^{x_0} t^{-2} \mathring{G}_0(t)\,
dt}\\
& + \sum_{\ell=1}^\infty  \left( \frac{\mathring{u}_{\ell}(x_0)}{H_{2,\ell}(x_0)} +\frac{2H_{1,\ell}(x_0)}{H_{2,\ell}(x_0)} \int_0^{x_0} t^{n-1} H_{2,\ell}(t) \mathring{G}_{\ell}(t)\,  dt \right) H_{2,\ell}(x)\varphi_\ell\\
& - \sum_{\ell=1}^\infty\left( 2H_{1,\ell}(x) \int_0^{x} t^{n-1} H_{2,\ell}(t) \mathring{G}_{\ell}(t)\, dt\right)\varphi_\ell-\sum_{\ell=1}^\infty \left(2H_{2,\ell}(x) \int_x^{x_0}  t^{n-1}H_{1,\ell}(t) \mathring{G}_{\ell}(t)\,
dt \right)\varphi_\ell.
\end{split}
\end{align}
The following claim, which is not optimal but sufficient, makes this rigorous. \medskip\

\noindent \emph{Claim}: The three infinite sums in \eqref{eq-Soln-Bessel} converge in $C^0_{\rm loc}$. Moreover, for all $N > 0$ there exists a small number $x_{0,N}>0$ such that if $x_0 \leq x_{0,N}$, then each of these sums is pointwise $O(x^N)$ as $x \to 0^+$.\medskip\

Assume for a moment that the Claim is true. Note that $|t^n\mathring{G}_0(t)| = O(t^{n + 2M})$ because $u = \mathbf{O}(x^M)$. Thus, the first un-boxed term on the right-hand side of \eqref{eq-Soln-Bessel} is $O(x^{2M-1}) = o(x)$ because $M>1$. By writing $\int_{x}^{x_0} = \int_0^{x_0} - \int_0^x$, we can split the second box in \eqref{eq-Soln-Bessel} into a constant multiple of $x$ plus a term which is ${O}(x^{2M}) = o(x)$ because $M > \frac{1}{2}$. Then the constant multiple of $x$ must cancel the first box in \eqref{eq-Soln-Bessel} to make $\mathring{u}(x) = o(x)$, so the proposition follows. Thus, it remains to prove the Claim.

We begin with two preliminary estimates. First, there is a constant $C$ such that for all $\ell \geq 1$,
\begin{align}\label{eq:auxspec1}
\|\varphi_\ell\|_{L^\infty(D)}\leq C \ell^{\frac{1}{2}}.
\end{align}
This follows from a standard estimate for eigenfunctions of the Laplacian on a closed manifold together with Weyl's law. Second, for all $x \in (0,1]$, all $\ell \geq 1$ and all $N_1,N_2>0$,
\begin{align}\label{eq:auxspec2}
|\mathring{u}_\ell(x)| + |\mathring{G}_\ell(x)| \leq C_{N_1,N_2} \ell^{-N_1}x^{N_2}.
\end{align}
Indeed, for any fixed $x$, integration by parts over $D$ shows as usual that $\mathring{u}_\ell(x), \mathring{G}_\ell(x)$ decay faster than any power of $\ell$, and the constants in these estimates can be bounded by the norms of sufficiently high tangential derivatives of $\mathring{u}(x), \mathring{G}(x)$. Since $u = \mathbf{O}(x^M)$, any tangential derivative of $\mathring{u}(x)$ is $O(x^\infty)$. The same holds for $\mathring{G}(x)$ because as in the proof of Lemma \ref{lem-UF}, we can decompose $u$ into an $\mathbf{O}(x^M)$ radial part plus $\mathbf{O}(x^\infty)$, and this shows that $\mathring{G}(x)$ decomposes into an $\mathbf{O}(x^{2M})$ radial part plus $\mathbf{O}(x^\infty)$.

We now apply \eqref{eq:auxspec1}--\eqref{eq:auxspec2} to each term of the three infinite sums in \eqref{eq-Soln-Bessel}. Then we can see that in order to prove the Claim, it will be enough to prove a bound of the form
\begin{align}\label{eq:quantities}
\frac{H_{2,\ell}(x)}{H_{2,\ell}(x_0)} + H_{1,\ell}(x)\sup_{0<t\leq x} \biggl(t^{\bar{N}}H_{2,\ell}(t)\biggr) + H_{2,\ell}(x) \sup_{x \leq t \leq x_0} \biggl(t^{\bar{N}} H_{1,\ell}(t)\biggr) \leq C_{x_0,\bar{N}}\ell^{N_0}x^{\bar{N}-N_0}
\end{align}
for some universal $N_0>0$ and for all $\bar{N} \geq N_0$. This bound needs to hold for all $x \in (0,x_0]$ and $\ell \geq 1$, but we can assume that $x_0$ is less than some small number $x_{0,\bar{N}}>0$ depending only on $\bar{N}$.

By the remark after \eqref{eq:bessel1}--\eqref{eq:bessel2}, for all $\epsilon >0$ there exists an $x_{0,\epsilon}>0$ independent of $\ell$ such that for all $0 < x \leq x_0 \leq x_{0,\epsilon}$, all $\ell \geq 1$ and all $\bar{N}>0$, the left-hand side of \eqref{eq:quantities} is bounded by
\begin{align}\label{eq:quantities1}
\begin{split}
&(1+\epsilon)\left(\frac{x}{x_0}\right)^{-\frac{n}{2}+\frac{1}{4}}x^{\bar{N}} \sup_{0 < t \leq x_0} \left(t^{-\bar{N}}e^{2\sqrt{\lambda_\ell}\left(\frac{1}{\sqrt{x_0}}-\frac{1}{\sqrt{t}}\right)}\right)^{\begin{small}\textcircled{1}\end{small}}\\
&+\frac{1+\epsilon}{4\sqrt{\lambda_\ell}}x^{-\frac{n}{2}+\frac{1}{4}}\left[e^{\frac{2\sqrt{\lambda_\ell}}{\sqrt{x}}}\sup_{0<t\leq x} \left(t^{-\frac{n}{2}+\frac{1}{4}+\bar{N}}e^{-\frac{2\sqrt{\lambda_\ell}}{\sqrt{t}}}\right)^{\begin{small}\textcircled{2}\end{small}} + e^{-\frac{2\sqrt{\lambda_\ell}}{\sqrt{x}}}\sup_{x \leq t \leq x_0}\left(t^{-\frac{n}{2}+\frac{1}{4}+\bar{N}}e^{\frac{2\sqrt{\lambda_\ell}}{\sqrt{t}}}\right)^{\begin{small}\textcircled{3}\end{small}}\right].
\end{split}
\end{align}
As a function of $t$, $\textcircled{1}$ is increasing from $t = 0$ until it reaches its global maximum. Assume $\bar{N} \geq \frac{n}{2}-\frac{1}{4}$. Then $\textcircled{2}$ is increasing for all $t > 0$, and $\textcircled{3}$ is decreasing from $t = 0$ until it reaches its global minimum. The maximum of $\textcircled{1}$ and the minimum of $\textcircled{3}$ are obviously attained for $t/{\lambda_\ell}$ equal to some constant depending only on $\bar{N}$, i.e., these functions are monotone on $(0,x_0]$ if $x_0 \leq x_{0,\bar{N}} \ll 1$. Thus,
\begin{align}
\eqref{eq:quantities1} \leq C_{x_0,\bar{N}}x^{\bar{N} - \frac{n}{2}+\frac{1}{4}} + C\lambda_\ell^{-\frac{1}{2}}x^{\bar{N}-n+\frac{1}{2}}
\end{align}
for all $\bar{N} \geq \frac{n}{2}-\frac{1}{4}$, all $0 < x \leq x_0 \leq x_{0,\bar{N}}$ and all $\ell \geq 1$, where $C$ is a universal constant independent of all of these parameters. This implies \eqref{eq:quantities} (with a large safety margin), hence the Claim after \eqref{eq-Soln-Bessel}, and hence, as we have already explained, the proposition.
\end{proof}

To make the above proposition useful for us, we also need the following calculus lemma. This is a sharper version of an estimate that was already used during the proof of the proposition.

 \begin{lemma}\label{lem-Hil-xk}
Let $c > 0$ and $k> -\frac{3}{2}$. Then for all $x > 0$ we have that
 \begin{align}\label{eq:firstintegral}
\int_{0}^{x} t^k e^{ -\frac{c}{\sqrt{t}}} \,dt < \frac{2}{c} x^{k+\frac{3}{2}} e^{- \frac{c}{\sqrt{x}}}.
\end{align}
Moreover,  for all $\varepsilon>0$ and for all $0<x\leq x_0$ with $x_0 \leq (\frac{\epsilon}{2+\epsilon} \frac{c}{2k+3})^2$ we have that
\begin{align}\label{eq:secondintegral}
\int_{x}^{x_0} t^k e^{\frac{c}{\sqrt{t}}} \,dt < \frac{2+\epsilon}{c} x^{k+\frac{3}{2}} e^{\frac{c}{\sqrt{x}}}.
\end{align}
\end{lemma}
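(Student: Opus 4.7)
The plan is to put both integrals into a common form via the substitution $s = c/\sqrt{t}$, which is the natural one because it linearizes the exponent. With $t = c^2/s^2$ and $dt = -2c^2 s^{-3}\,ds$, one checks that $t^k\,dt = -2c^{2k+2} s^{-2k-3}\,ds$, so setting $A := c/\sqrt{x}$ and $B := c/\sqrt{x_0}$ (note $B \leq A$ since $x \leq x_0$),
\begin{align*}
\int_0^x t^k e^{-c/\sqrt{t}}\,dt = 2c^{2k+2}\int_A^\infty s^{-2k-3} e^{-s}\,ds,\;\,\int_x^{x_0} t^k e^{c/\sqrt{t}}\,dt = 2c^{2k+2}\int_B^A s^{-2k-3} e^s\,ds.
\end{align*}

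Bound \eqref{eq:firstintegral} then follows by pure monotonicity: since $k > -\tfrac{3}{2}$, we have $2k+3 > 0$, so $s^{-2k-3}$ is strictly decreasing on $[A,\infty)$, and pulling it out at the left endpoint gives $\int_A^\infty s^{-2k-3} e^{-s}\,ds < A^{-2k-3} e^{-A}$. Multiplying by $2c^{2k+2}$ and expressing $A$ in terms of $x$ yields exactly $\tfrac{2}{c} x^{k+3/2} e^{-c/\sqrt{x}}$.

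Bound \eqref{eq:secondintegral} is the main content of the lemma. A direct monotonicity argument now fails, since the integrand concentrates at the upper endpoint $s = A$, where $e^s$ is largest and $s^{-2k-3}$ is smallest. The key device is integration by parts:
\begin{align*}
\int_B^A s^{-2k-3} e^s\,ds = A^{-2k-3} e^A - B^{-2k-3} e^B + (2k+3)\int_B^A s^{-2k-4} e^s\,ds.
\end{align*}
Using $s^{-2k-4} \leq B^{-1} s^{-2k-3}$ on $[B,A]$, the right-most integral is dominated by $B^{-1}$ times the integral we started with. Rearranging (legitimate because $B$ will exceed $2k+3$ by hypothesis) gives
\begin{align*}
\left(1 - \frac{2k+3}{B}\right)\int_B^A s^{-2k-3} e^s\,ds < A^{-2k-3} e^A.
\end{align*}

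The hypothesis $x_0 \leq \bigl(\tfrac{\epsilon}{2+\epsilon}\cdot\tfrac{c}{2k+3}\bigr)^2$ is precisely the condition $B \geq \tfrac{(2+\epsilon)(2k+3)}{\epsilon}$, which forces $\bigl(1 - \tfrac{2k+3}{B}\bigr)^{-1} \leq \tfrac{2+\epsilon}{2}$; multiplying through by $2c^{2k+2}$ and converting back to the $t$-variable then yields \eqref{eq:secondintegral}. The only real obstacle is this second bound: the integration-by-parts fixed-point estimate only closes once $B$ is large enough to beat $2k+3$, which explains both the smallness condition on $x_0$ and the unavoidable loss factor $\tfrac{2+\epsilon}{2} > 1$ (this factor tends to the sharp constant $1$ as the hypothesized $x_0 \to 0^+$, i.e., $B \to \infty$).
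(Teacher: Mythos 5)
Your proof is correct, and it takes a genuinely different route from the paper's. The paper proves both bounds by studying the ratio $R(x) = c\int t^k e^{\mp c/\sqrt{t}}\,dt \,/\, (x^{k+3/2}e^{\mp c/\sqrt{x}})$ directly: it computes the boundary limits of $R$ via L'H\^opital's rule and then shows that at any interior critical point the critical-point equation forces $R = \frac{2c}{c \pm (2k+3)\sqrt{x}}$, which is $<2$ in the first case and $<2+\epsilon$ in the second under the smallness hypothesis on $x_0$. You instead substitute $s = c/\sqrt{t}$ to reduce everything to incomplete-Gamma-type integrals $\int s^{-2k-3}e^{\pm s}\,ds$, handle the first bound by pulling the monotone factor $s^{-2k-3}$ out at the endpoint, and handle the second by integration by parts followed by absorbing the remainder term back into the original integral (a self-improving estimate that closes once $B = c/\sqrt{x_0} > 2k+3$). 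All the steps check out: the change of variables gives $t^k\,dt = -2c^{2k+2}s^{-2k-3}\,ds$ as claimed; strictness in \eqref{eq:firstintegral} comes from the strict monotonicity of $s^{-2k-3}$ and in \eqref{eq:secondintegral} from discarding the positive boundary term $B^{-2k-3}e^B$; and the hypothesis on $x_0$ translates exactly to $\frac{2k+3}{B} \leq \frac{\epsilon}{2+\epsilon}$, hence $(1-\frac{2k+3}{B})^{-1} \leq \frac{2+\epsilon}{2}$. What your approach buys is conceptual transparency: it makes visible that these are Laplace-type endpoint estimates for incomplete Gamma functions, and the iteration-by-parts structure explains where the loss factor $\frac{2+\epsilon}{2}$ comes from and why it disappears as $x_0 \to 0^+$. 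What the paper's approach buys is uniformity (one critical-point computation covers both signs) and the avoidance of any rearrangement step, at the cost of being less illuminating about the mechanism.
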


\begin{proof}
For \eqref{eq:firstintegral} we begin by observing that for all $c > 0$ and all $k > -\frac{3}{2}$,
\begin{align}\label{eq:1stint-bdry}
\lim_{x\rightarrow 0}\frac{c \int_0^{x} t^k e^{-\frac{c}{\sqrt{t} }}\, dt}{x^{k+\frac{3}{2}} e^{-\frac{c}{\sqrt{x}}}}& = 2,\;\,
\lim_{x\rightarrow \infty}\frac{c \int_0^{x} t^k e^{-\frac{c}{\sqrt{t} }}\, dt}{x^{k+\frac{3}{2}} e^{-\frac{c}{\sqrt{x}}}} = 0.
\end{align}
This is an application of L'H\^opital's rule. The limit as $x \to 0$ is always of the form $\frac{0}{0}$, so L'H\^opital can be applied directly, while for $x \to \infty$ we need to consider the two cases $-\frac{3}{2} < k < -1$, where the limit is of the form $\frac{\text{finite}}{\infty} = 0$, and $k \geq -1$, where it takes the form $\frac{\infty}{\infty}$ and L'H\^opital can be used. 

If the function whose limits we found in \eqref{eq:1stint-bdry} has no critical points, we are done. If $x \in (0,\infty)$ is a critical point, then the critical point equation implies that $c + (2k+3)\sqrt{x} > 0$ and
\begin{align}
\frac{c \int_0^{x} t^k e^{-\frac{c}{\sqrt{t} }}\, dt}{x^{k+\frac{3}{2}} e^{-\frac{c}{\sqrt{x}}}} = \frac{2c}{c + (2k+3)\sqrt{x}}.
\end{align} 
Because $k > -\frac{3}{2}$, this ratio is strictly less than $2$, so the function cannot attain a global maximum at $x$ and is therefore strictly less than $2$ everywhere in this case as well.

The argument for \eqref{eq:secondintegral} is similar. The limits of the corresponding function as $x \to 0$ and $x \to x_0$ are $2$ and $0$, respectively. If $x \in (0,x_0)$ is a critical point, then $c - (2k+3)\sqrt{x} > 0$ and
\begin{align}
\frac{c \int_x^{x_0} t^k e^{\frac{c}{\sqrt{t} }}\, dt}{x^{k+\frac{3}{2}} e^{\frac{c}{\sqrt{x}}}} = \frac{2c}{ c- \left(2 k+3\right) \sqrt{x}}.
\end{align}
This ratio is strictly greater than $2$, but if $x_0 \leq (\frac{\epsilon}{2+\epsilon}\frac{c}{2k+3})^2$ then it is strictly less than $2+\epsilon$.
\end{proof}

\subsection{The continuity argument}

Fix $N \in (0,1)$ once and for all. Define
\begin{equation}
H(x) := x^{ - \frac{n}{2} + \frac{1}{4}}e^{-\frac{2\sqrt{\lambda_1}}{\sqrt{x}}}.
\end{equation}
Throughout this section, write $v(t)$ to denote the restriction of a function $v$ on the total space to the compact slice $\{x=t\}$. Given $x_0 \in (0,1]$, $C>0$ and $x \in (0,x_0]$, consider the statement
\begin{equation}\label{genesis}
\mathbf{S}_{x_0,C}(x): \text{``For all $y \in (0,x_0]$, we have that}\,\max |u(y)| \leq \begin{cases}CH(y)&\text{if}\;\,y \in [x,x_0],\\
CH(x)(\frac{y}{x})^N&\text{if}\;\,y\in(0,x].\text{''}\end{cases}
\end{equation}

\begin{proposition}\label{prop:key-prop}
Assume only that $u = \mathbf{O}(x^{M})$ as $x \to 0^+$ for some $M > \max\{\frac{3}{2},2-N\}$ rather than $|u| = O(x^\infty)$. There exist $x_0 \in (0,1]$ and $C >0$ such that
$\inf\{x \in (0,x_0]: \mathbf{S}_{x_0,C}(x)\;\text{is true}\} = 0$.
\end{proposition}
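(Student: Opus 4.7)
I would prove Proposition \ref{prop:key-prop} by a continuity argument in $x$. First note that $\mathbf{S}_{x_0,C}(x)$ is monotone in the sense that $\mathbf{S}_{x_0,C}(x_1)$ implies $\mathbf{S}_{x_0,C}(x_2)$ whenever $x_1 < x_2$ (a short direct verification using that $H$ is increasing on $(0,x_0]$ for $x_0$ small, so that the inner-region bound at $x_2$ dominates the outer-region bound on $[x_1,x_2]$). Hence $X := \{x \in (0,x_0] : \mathbf{S}_{x_0,C}(x)\text{ holds}\}$ is of the form $[x_\ast, x_0]$, and it suffices to show $x_\ast = 0$. The strategy has three steps: (i) show $X$ is nonempty; (ii) $X$ is closed in $(0,x_0]$; (iii) assuming $x_\ast > 0$, use the representation formula of Proposition \ref{prop:rep} to establish a \emph{strict} improvement of $\mathbf{S}_{x_0,C}(x_\ast)$, which by continuity of $u$ and $H$ propagates to $\mathbf{S}_{x_0,C}(x_\ast - \delta)$ for some $\delta > 0$, contradicting the minimality of $x_\ast$.

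Steps (i) and (ii) are routine. At $x = x_0$ the outer condition collapses to $|u(x_0)| \leq CH(x_0)$, satisfied for $C$ large, while the inner bound $|u(y)| \leq CH(x_0)(y/x_0)^N$ on $(0,x_0]$ follows from $u = \mathbf{O}(x^M)$ together with $M > N$ (which is implied by $M>2-N$ and $N<1$). Closedness is immediate from continuity of both sides of \eqref{genesis}.

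Step (iii) is the heart of the proof. Starting from $\mathbf{S}_{x_0,C}(x_\ast)$, standard elliptic regularity at the $\sqrt{y}$-scale (quasi-coordinates as in Lemma \ref{lem-v_higher}) converts pointwise bounds on $u$ into bounds on $\ddbar u$ of the same order, and the quadratic nature of $Q_h$ (cf.~the proof of Lemma \ref{lem-UF}) then yields $|\mathring G(y)| \lesssim C^2 H(y)^2$ on the outer region $[x_\ast,x_0]$ and $|\mathring G(y)| \lesssim C^2 H(x_\ast)^2 (y/x_\ast)^{2N}$ on the inner region $(0,x_\ast]$, with rapid decay of $\mathring G_\ell$ in $\ell$ by \eqref{eq:auxspec2}. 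Substituting into the representation formula \eqref{eq-Soln-Bessel-3} and applying the Bessel asymptotics \eqref{eq:bessel1}--\eqref{eq:bessel2} together with Lemma \ref{lem-Hil-xk} to each integral, I would show that the boundary term $\frac{\mathring u_1(x_0)}{H_{2,1}(x_0)}H_{2,1}(z)$ contributes at most $C_0 H(z)$ for some fixed $C_0$ depending only on the data (independent of $C$ and $x_0$), while each of the three $\ell=1$ integral terms in \eqref{eq-Soln-Bessel-3} contributes at most a universal constant times $C^2 \sqrt{x_0}\,H(z)$ on the outer region (the $\sqrt{x_0}$ smallness coming from integrating the quadratic weight against the Bessel weights via Lemma \ref{lem-Hil-xk}). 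The higher modes are controlled by $H_{2,\ell}$ with $\lambda_\ell \geq \lambda_2 > \lambda_1$ and their sum is handled exactly as in the Claim proved inside Proposition \ref{prop:rep}; the $\ell=0$ radial contributions are polynomial in $x$ of order $O(x^{2M-1})$, which is $o(H(z))$ thanks to $M > 3/2$. Choosing $C \geq 4C_0$ and then $x_0$ small enough that $C^2 \sqrt{x_0} \leq C/8$, and adding back the $O(e^{-\epsilon/x})$ gap between $u$ and $\mathring u$ from Lemma \ref{lem:circle}, I would conclude $|u(z)| \leq (C/2) H(z)$ on the outer region. The inner region improves analogously; the condition $N<1$ is used precisely to propagate the inner bound across the matching point $y=x_\ast$ without losing a polynomial factor.

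The main obstacle will be the bookkeeping in step (iii): matching the constants in each of the three Bessel integrals in \eqref{eq-Soln-Bessel-3} so that the quadratic error is quantitatively dominated by the improvement factor, and verifying that the non-leading ($\ell \geq 2$ and $\ell = 0$) contributions sum to something strictly smaller than $H(z)$. The precise role of the hypothesis $M > \max\{3/2, 2-N\}$ is to control the $\ell=0$ integrals in \eqref{eq-Soln-Bessel-3} at a rate absorbable into $H(z)$ and to align the inner-region bound with the a priori decay of $u$ at the matching point.
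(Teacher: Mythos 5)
Your overall architecture (continuity method in $x$, representation formula of Proposition \ref{prop:rep}, Bessel asymptotics plus Lemma \ref{lem-Hil-xk}, Schauder in quasi-coordinates, Lemma \ref{lem:circle} to pass between $u$ and $\mathring{u}$) is the same as the paper's, and your monotonicity observation about $\mathbf{S}_{x_0,C}$ is correct. But the quantitative core of step (iii) has two gaps that, as written, prevent the argument from closing. First, interior Schauder estimates at scale $r$ convert a bound on $\sup_{A_r(t)}|u|$ into a bound on $\ddbar u$ only after dividing by $r^2$; at the $\sqrt{t}$-scale this is a factor $t^{-1}$, not ``the same order.'' With this loss, the $\ell=0$ integral $x\int_0^x t^{-2}|\mathring{G}_0(t)|\,dt$ is not even convergent under the hypothesis $M>\max\{\frac32,2-N\}$ if you only use $\sqrt{t}$-scale regularity: you would need roughly $M>3-N$. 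This is exactly why the paper's Claim 2 proves the Schauder bound at radii up to $r\sim 1$ (using flatness of $D$) and why the radius $r(t)$ in \eqref{eq-radius-choice} is chosen to degenerate only near $t=x_*$. Second, your constant bookkeeping is inconsistent. The boundary term $\frac{\mathring{u}_\ell(x_0)}{H_{2,\ell}(x_0)}H_{2,\ell}(z)$ is of size $O(x_0^\infty)H(z)/H(x_0)$, not $C_0H(z)$ with $C_0$ independent of $x_0$: since $H(x_0)$ is exponentially small, this forces $C\gtrsim 1/H(x_0)$, which blows up as $x_0\to 0$. Your proposed closure ``$C^2\sqrt{x_0}\le C/8$'', i.e.\ $C\le 1/(8\sqrt{x_0})$, is then unachievable. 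The paper escapes the $C$-versus-$C^2$ tension not by making $x_0$ small relative to $C$ but by \emph{linearizing} the quadratic estimate: in Claim 2 one factor of $\ddbar u$ is absorbed into the a priori bound $|u|\le C_Mt^M$, yielding $|\mathring{G}_\ell(t)|\lesssim \frac{t^\eta}{r^2}\sup_{A_r(t)}|u|$, so every integral term ends up bounded by $C\cdot O(x_0^{\eta-\frac12})\cdot H(x_*)$ --- linear in $C$, with smallness coming from $x_0$ alone.

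There is also a structural gap in the propagation step. Because $y\mapsto H(y)y^{-N}$ is increasing near $0$, the inner barrier $CH(x)(y/x)^N$ becomes \emph{strictly stronger} as $x$ decreases, so $\mathbf{S}_{x_0,C}(x_*)$ does not hand you the inner half of $\mathbf{S}_{x_0,C}(x_*-\delta)$; you must rule out the possibility that $|u|$ touches the inner barrier at some $y_*<x_*$. The paper does this with the maximum-principle barrier Lemma \ref{lem:barrier} (the function $x^N$ with $N<1$ is a strict supersolution), combined with the compactness argument of Claim 1, and then only needs the representation formula at the single slice $x=x_*$. You instead propose to derive a factor-of-two improvement of the inner bound on all of $(0,x_*]$ from the representation formula. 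That route is not obviously wrong --- the inner contributions do appear to be dominated by $H(x_*)(y/x_*)^N$ --- but you have not verified it, and it silently discards the tool (Lemma \ref{lem:barrier}) that the paper relies on precisely at this point. If you keep your route, you must check the inner estimate term by term (in particular the $\ell=0$ integrals, where the exponent condition $-2+\eta+N>-1$, hence $M>2-N$, is exactly what makes the integral finite); if you adopt the paper's route, Claim 1 plus Lemma \ref{lem:barrier} replaces that entire verification.
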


If this is true, then $u$ satisfies the pointwise decay estimate stated in Reduction \ref{thm-main-red} uniformly on all intervals $[x_i,x_0]$ for some infinite sequence $x_i \to 0^+$. Thus, the Main Theorem is proved.

For the proof of Proposition \ref{prop:key-prop} we need the following barrier property of the function $x^N$. This is a standard application of the maximum principle except for one twist.

\begin{lemma}\label{lem:barrier}
Assume only that $u = \mathbf{O}(x^M)$ as $x \to 0^+$ for some $M > \frac{N}{2}$. There exists an $x_1 > 0$ such that the following is true. Let $\Omega \subset \{x < x_1\}$ be an open set. Let $p\in\Omega$. Let $c > 0$. It is not possible to have $u \leq c x^N$ in $\Omega$ with equality at $p$, or to have $u \geq -c x^N$ in $\Omega$ with equality at $p$.
\end{lemma}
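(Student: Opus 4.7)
The plan is a maximum-principle argument at the touching point $p$. The key identity is that, since all tangential and $\theta$-derivatives of the radial function $x^N$ vanish, Lemma \ref{lem:Lu} gives
\[
L_h(x^N)=\frac{(N-1)(N+n+1)}{n+1}\,x^N,
\]
which is strictly negative for $N\in(0,1)$. Moreover, applying Lemma \ref{lem-PDE-ODE} to the radial potential $\psi=\pm cx^N$ yields the exact formula
\[
\log\frac{(\omega_h\pm c\ddbar x^N)^n}{\omega_h^n}=(n-1)\log\!\Bigl(1\pm\frac{cNx^N}{n+1}\Bigr)+\log\!\Bigl(1\pm\frac{cN(N+1)x^N}{n+1}\Bigr),
\]
from which both the positivity of $\omega_h\pm c\ddbar x^N$ and the required Taylor expansion can be read off directly.

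For the upper barrier case, I would argue as follows. If $u\leq cx^N$ on $\Omega$ with equality at $p$, then $\ddbar(u-cx^N)(p)\leq 0$ as Hermitian $(1,1)$-forms, so $\omega(p)\leq\omega_h(p)+c\ddbar x^N(p)$ with both sides positive definite. Monotonicity of $\log\det$ and the Monge--Amp\`ere equation $\log(\omega^n/\omega_h^n)=u$ then give $cx^N(p)=u(p)\leq\log((\omega_h+c\ddbar x^N)^n/\omega_h^n)(p)$, which in the variable $a:=cNx^N(p)/(n+1)$ is exactly $h(a)\geq 0$ for
\[
h(a):=(n-1)\log(1+a)+\log(1+(N+1)a)-\frac{(n+1)a}{N}.
\]
Since $h(0)=0$ and $h'(0)=(N-1)(N+n+1)/N<0$, one has $h(a)<0$ on some interval $(0,a_0)$ with $a_0=a_0(n,N)>0$. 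The lower barrier case is parallel and leads to the analogous inequality $\tilde h(a)\geq 0$ for
\[
\tilde h(a):=-(n-1)\log(1-a)-\log(1-(N+1)a)-\frac{(n+1)a}{N},
\]
which likewise fails on $(0,\tilde a_0)$ for some $\tilde a_0=\tilde a_0(n,N)>0$ because $\tilde h(0)=0$ and $\tilde h'(0)=(N-1)(N+n+1)/N<0$; here I would additionally use $a(p)<1/(N+1)$ to ensure $\omega_h-c\ddbar x^N>0$ at $p$.

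The main obstacle is that the barrier constant $c$ is arbitrary, so one cannot simply require $c\ddbar x^N$ to be small uniformly on $\Omega$; this is the nonstandard twist. The resolution is the a priori hypothesis $u=\mathbf O(x^M)$: at the touching point,
\[
a(p)=\frac{N|u(p)|}{n+1}\leq \frac{CN\,x(p)^M}{n+1}\leq \frac{CN\,x_1^M}{n+1},
\]
which can be made smaller than $\min\{a_0,\tilde a_0,1/(N+1)\}$ by choosing $x_1$ small in terms of $n,N$ and the implicit constant in the hypothesis, independently of $c$. This forces $a(p)$ into the region where $h,\tilde h<0$, contradicting the inequalities derived above and completing the proof.
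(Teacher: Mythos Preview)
Your proof is correct, and it differs from the paper's in an interesting way. Both arguments compare $u$ with $\pm cx^N$ at the touching point via the Monge--Amp\`ere equation, but you push the comparison through the exact radial formula of Lemma~\ref{lem-PDE-ODE}, reducing both the upper and the lower barrier cases to a single one-variable inequality in $a=cNx^N(p)/(n+1)$, and then you control $a(p)$ using only the $C^0$ bound $|u(p)|\leq Cx(p)^M$ (since $a(p)=N|u(p)|/(n+1)$). The paper instead disposes of the upper barrier immediately via the concavity inequality $M_h(B)\leq L_h(B)=-\gamma B$, with no smallness of $x_1$ needed at all, and then handles the lower barrier by a dichotomy on the size of $c$: for small $c$ a direct Monge--Amp\`ere comparison, while for large $c$ it uses the identity $L_h(u)=-Q_h(u)$ together with the $C^2$ bound $|\ddbar u|_{\omega_h}\leq C_Mx^M$ coming from the full $\mathbf{O}(x^M)$ hypothesis, which is where the threshold $M>N/2$ enters.

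What your approach buys is uniformity and economy of hypotheses: you never touch the derivative part of $\mathbf{O}(x^M)$, and your argument actually goes through for any $M>0$. What the paper's approach buys is that the upper barrier case is completely soft (in fact your function $h$ satisfies $h(a)<0$ for \emph{all} $a>0$ by the concavity of $\log$, so the restriction $a<a_0$ is unnecessary there), and the case split localizes the genuinely nonstandard difficulty to the large-$c$ regime of the lower barrier.
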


\begin{proof}
Define $B = c x^N$. If $u \leq B$ in $\Omega$ with equality at $p$, then $(\ddbar B)(p) \geq (\ddbar u)(p)$, so in particular $(\omega_h + \ddbar B)(p) > 0$ and the Monge-Amp\`ere operator $M_h(B)$ as in \eqref{eq:MAOp} is defined at $p$. In addition, $0 = M_h(u)\leq M_h(B) \leq L_h(B) = -\gamma B < 0$ at $p$, where $\gamma = (1-N)(n+1+N) > 0$ and where we have used the concavity of $\log \det$ on the space of Hermitian matrices. This is a contradiction.

Fix $C_M$ so that $|\ddbar u|_{\omega_h} \leq C_M x^{M}$. Note that $|\ddbar x^N|_{\omega_h} = \delta x^N$ for some universal $\delta>0$. There is also a universal $\epsilon>0$ such that $|Q_h(v)| \leq \epsilon|\ddbar v|$\begin{footnotesize}$^2_{\omega_h}$\end{footnotesize} for all functions $v$ at all points where $|\ddbar v|$\begin{footnotesize}$_{\omega_h}$\end{footnotesize} $\leq 1$. 
Then choose $x_1$ so that $\max\{C_M x_1^M,\delta x_1^N\} \leq 1$ and $\epsilon C_M^2 x_1^{2M-N} \leq \lambda\gamma$, where $\lambda = \min\{1,\frac{\gamma}{\delta\epsilon}\}$.

Now suppose that $u \geq -B$ in $\Omega$ with equality at $p$. If $c < \lambda$, then in particular $c < 1$, so we have $|\ddbar B|_{\omega_h} = c\delta x^N < 1 <\sqrt{n}= |\omega_h|_{\omega_h}$, hence $\omega_h - \ddbar B > 0$, on $\Omega$. Thus, the Monge-Amp\`ere operator $M_h(-B)$ is defined on $\Omega$, and $0 = M_h(u) \geq M_h(-B) = \gamma B + Q_h(-B) \geq (\gamma c - \epsilon c^2 \delta^2 x^N) x^N > 0$ at $p$ because $\delta x^N < 1$ and $c < \frac{\gamma}{\delta\epsilon}$. This is a contradiction. We also have $-Q_h(u) = L_h(u) \geq L_h(-B) = \gamma B$ at $p$. Thus, if $c \geq \lambda$, then $\lambda \gamma x^N \leq |Q_h(u)| \leq \epsilon C_M^2 x^{2M}$ at $p$. This is another contradiction.
\end{proof}

\begin{proof}[Proof of Proposition \ref{prop:key-prop}]
We begin by setting notation and explaining some conventions that will be in force throughout the proof. Fix $M > \max\{\frac{3}{2},2-N\}$ and a constant $C_M$ such that $\max |u(x)| \leq C_M x^{M}$ for all $x \in (0,1]$. We let $C_0$ denote a generic constant that can increase from line to line or even within the same line but that is not allowed to depend on the constants $x_0,C$, which are still to be chosen, or on any unproved properties of $u$. By contrast, $C$ will always denote the specific constant in \eqref{genesis}. 

We will show that the proposition holds for all sufficiently small values of $x_0$ and for all sufficiently large values of $C$. Thus, there is no harm in assuming that $C \geq 1$ and $x_0 \leq \frac{1}{2}$. We will also assume that $x_0$ is so small that $x_0\leq x_1$ as in Lemma \ref{lem:barrier}, that the representation formula \eqref{eq-Soln-Bessel-3} holds on $(0,x_0]$ $\times$ $D$, and that $\max |u(x) - \mathring{u}(x)| \leq \frac{1}{6} H(x)$ for all $x \in (0,x_0]$. The latter is possible by Lemma \ref{lem:circle}.

Consider any two numbers $x_0$ and $C$ satisfying these constraints. We begin by observing that if, in addition, $C H(x_0) \geq C_M x_0^N$, then the statement $\mathbf{S}_{x_0,C}(x_0)$ is true because $\max |u(x)| \leq C_M x^M$ for all $x \in (0,1]$, where $M \geq N$. Thus, the set of all $x \in (0,x_0]$ for which $\mathbf{S}_{x_0,C}(x)$ is true is not empty.

The infimum, $x_*$, of this set is an element of $[0,x_0]$. We may assume that $x_* > 0$ for all admissible choices of $x_0$ and $C$, aiming to derive a contradiction if $x_0$ is small and $C$ is large. Because $\max |u(y)|$ is continuous in $y$, clearly $\mathbf{S}_{x_0,C}(x)$ is true for $x = x_*$ but not for any $x < x_*$.\medskip\

\noindent \emph{Claim 1}: There exists a $y = y_* \in (0,x_*]$ for which \eqref{genesis} with $x = x_*$ is an equality.\medskip\

\noindent \emph{Proof}: If the inequality \eqref{genesis} with $x = x_*$ was strict for all $y \in (0,x_*]$, then, again because $\max |u(y)|$ is continuous in $y$, the first line of \eqref{genesis} would obviously still be true for all $x \in [x_*-\epsilon,x_*)$ for some $\epsilon \in (0,\frac{x_*}{2}]$. We will now show that the second line of \eqref{genesis} also still holds for all $x \in [x_*-\epsilon,x_*)$ after decreasing $\epsilon$ if needed. If so, then $\mathbf{S}_{x_0,C}(x)$ holds for all $x \in [x_*-\epsilon,x_*)$, contradicting the minimality of $x_*$ and proving the Claim. Thus, we need to prove that $\max |u(y)| \leq CH(x)(\frac{y}{x})^N$ for all $y \in (0,x]$ and all $x \in [x_*-\epsilon,x_*)$, given that this is true for $x = x_*$ with strict inequality for all $y \in (0,x_*]$. 

To do so, we begin by defining $\bar{x}_*$ to be the largest number in $(0,x_*]$ such that
\begin{align}\label{eq:xbarstar}
C_M \bar{x}_*^{M-N} \leq \min\{x^{-N} H(x):x_*/2\leq x \leq x_*\}.
\end{align}
This exists because $M > N$. Then for all $y \in (0,\bar{x}_*)$ it directly follows from \eqref{eq:xbarstar}, $C \geq 1$ and $\epsilon\leq\frac{x_*}{2}$ that $\max |u(y)| \leq C_M y^M < CH(x)(\frac{y}{x})^N$ for all $x \in [x_*-\epsilon,x_*)$. Thus, it remains to prove the desired inequality for all $y \in [\bar{x}_*,x]$ and all $x \in [x_*-\epsilon,x_*)$. Then simply note that after decreasing $\epsilon$ if needed the inequality actually holds for all $y \in [\bar{x}_*,x_*]$ because on the compact interval $[\bar{x}_*,x_*]$ the continuous function $y \mapsto \max |u(y)|$ is pointwise strictly smaller than the continuous function $y \mapsto CH(x_*)($\begin{footnotesize}$\frac{y}{x_*}$\end{footnotesize}$)^N$, which is uniformly approximated by the functions $y \mapsto CH(x)(\frac{y}{x})^N$ as $x \to x_*$.\hfill $\Box_{\text{Claim 1}}$\medskip\

Lemma \ref{lem:barrier} rules out the possibility that $y_* < x_*$, so $y_* = x_*$ and hence $\max |u(x_*)| =C H(x_*) > 0$. For a contradiction it then clearly suffices to prove that $\max |u(x_*)| \leq \frac{5}{6} C H(x_*)$ if $x_0$ is small and $C$ is large. This follows if we can prove that $\max |\mathring{u}(x_*)| \leq \frac{2}{3}C H(x_*)$, and doing so will take up the rest of this paper. The idea is to bound $\max |\mathring{u}(x_*)|$ by applying the representation formula \eqref{eq-Soln-Bessel-3} at $x = x_*$, whose right-hand side is a linear functional of $\mathring{G}$ (over the whole end $\{0<x \leq x_0\}$), i.e., a quadratic or higher functional of $\ddbar u$. To make this precise, we first bound $\ddbar u$ in terms of $u$ using Schauder theory (see the following Claim) and then bound $u$ pointwise by using \eqref{genesis} for $x = x_*$.

To state Claim 2 we need some more notation. For $t \in (0,x_0]$ and $r \in (0,\frac{1}{2}]$ define 
\begin{align}
A_r(t) := \{(1-r)t < x < (1+r)t\}.
\end{align}
Recall here that $\rho = -\sqrt{(n+1)/2}\log x$ satisfies $|d\rho|_{\omega_h}=1$. Thus, for all $p \in A_r(t)$ with $x(p) = t$ there exists an $\omega_h$-geodesic ball of radius comparable to $r$ centered at $p$ which is contained in $A_r(t)$.\medskip\

\noindent \emph{Claim 2}: For all $Q \in \N_0$ there exists an absolute constant $C_Q$, and for all $\eta \in (0,M-1)$ there exists an absolute number $x_{0,\eta}>0$, such that if $x_0 \leq x_{0,\eta}$, then for all $t \in (0,x_0]$, all $r \in [\sqrt{t}/2,1/2]$ and all $(\ell,Q) \in  \{(0,0)\} \cup (\N_{\geq 1} \times \N_0)$ we have that 
\begin{align}
|\mathring{G}_{\ell}(t)| & \leq C_Q (1+\lambda_{\ell})^{-Q} \left(\frac{\sqrt{t}}{r}\right)^{2Q} \frac{t^\eta}{r^2}\sup_{A_r(t)} |u|.\label{eq-Schauder+1}
\end{align}

For $r \sim \sqrt{t}$, which is already a useful case, one can prove \eqref{eq-Schauder+1} by passing to a local covering space and reducing to the standard Schauder theory on $\C^n$ even if $D$ is a non-flat Calabi-Yau manifold; see the argument after Reduction \ref{thm-main-red}.  However, we also require \eqref{eq-Schauder+1} for $r \gg \sqrt{t}$, and for this we do need to assume that $D$ is flat. But this is the only point in Section \ref{sec:exp-decay} where the flatness of $D$ is used, and it again seems likely that this can be avoided by using the methods of \cite{HeinTosatti}.\medskip\

\noindent \emph{Proof}: We begin with the case $Q = 0$, where $\ell \in \N_0$ is arbitrary. We first claim that
\begin{align}\label{eq:firststepofmany}
|\mathring{G}_{\ell}(t)| \leq C_0 \|\varphi_\ell\|_{L^1(D)} \max_{2 \leq k \leq n} \max_{\{x=t\}} |\ddbar u|_{\omega_h}^k \leq  C_0 \max_{2\leq k \leq n} \left(\frac{1}{r^2}\sup_{A_r(t)} |u|\right)^k.
\end{align}
Here the first inequality is clear from \eqref{eq-PDE}--\eqref{eq:fourier}. The second inequality follows from Schauder theory applied on balls of radius $\sim r$ in quasi-coordinates for $\omega_h$ (see the proof of Lemma \ref{lem-v_higher}), using also the fact that $\|\varphi_\ell\|_{L^2(D)} = 1$. As $M> 1$, given any $\eta \in (0,M-1)$ we can choose $x_{0,\eta}$ so small that
\begin{equation}
C_M(3/2)^M x_{0,\eta}^{M-1-\eta}\leq 1/4.
\end{equation}
As $\max |u(y)| \leq C_M y^M$, it follows for all $r \in [\sqrt{t}/2, 1/2]$ that
\begin{align}
\sup\nolimits_{A_r(t)} |u| \leq C_M (3t/2)^M \leq t^{\eta}(t/4) \leq t^\eta r^2 < r^2.
\end{align}
This shows that the $k = 2$ term in \eqref{eq:firststepofmany} dominates. It also allows us to estimate this quadratic term by a small linear term (which is actually more convenient), proving \eqref{eq-Schauder+1} for $Q = 0$. 

For $Q \geq 1$ and $\ell \geq 1$, we first integrate by parts $Q$ times in the definition of $\mathring{G}_\ell(t)$, which brings out a factor of $\lambda_\ell^{-Q}$ while replacing $\mathring{G}(t)$ by a $2Q$-th covariant derivative of $\mathring{G}(t)$ with respect to the given flat metric on $D$. We gain $2Q$ factors of $\sqrt{t}$ by replacing each of these covariant derivatives on $D$ by a directional derivative with respect to some quasi-coordinate for $\omega_h$. By Schauder theory, we can trade each of the latter for a factor of $C_0/r$ if we also take the sup over a ball of radius $\sim r$.
\hfill $\Box_{\text{Claim 2}}$\medskip\

Thanks to Claim 2 and \eqref{genesis} for $x = x_*$, we are now able to estimate the right-hand side of \eqref{eq-Soln-Bessel-3} at $x = x_*$ and prove the desired result, i.e., that $\max |\mathring{u}(x_*)| \leq \frac{2}{3}C H(x_*)$ if $x_0$ is small and $C$ is large. As discussed above, this will complete the proof of the proposition and of the Main Theorem.
     
Evaluate \eqref{eq-Soln-Bessel-3} at $x = x_*$, take $L^\infty$ norms over $\{x_*\} \times D$, apply the triangle inequality, and absorb irrelevant numerical constants. This yields the following:
\begin{align}\begin{split}\label{eq-Soln-Bessel-456}
&\frac{1}{2}\max |\mathring{u}(x_*)| \leq
{^{\textcircled{4}} x_*^{-n-1} \int_0^{x_*} t^{n} |\mathring{G}_0(t)|\, dt}+  {^{\textcircled{5}}x_* \int_0^{x_*} t^{-2} |\mathring{G}_0(t)|\,dt}\\
&+ \sum_{\ell=1}^\infty  {^{\textcircled{1}}\left({^{\textcircled{a}}\frac{|u_{\ell}(x_0)|}{H_{2,\ell}(x_0)} +^{\textcircled{b}}\frac{H_{1,\ell}(x_0)}{H_{2,\ell}(x_0)} \int_0^{x_0} t^{n-1} H_{2,\ell}(t) |\mathring{G}_{\ell}(t)|\,  dt}\right) H_{2,\ell}(x_*)\|\varphi_\ell\|_{L^\infty(D)}}\\
&+  \sum_{\ell=1}^\infty{^{\textcircled{3}}\left( H_{1,\ell}(x_*) \int_0^{x_*} t^{n-1}H_{2,\ell}(t) |\mathring{G}_{\ell}(t)|\, dt \right)\|\varphi_\ell\|_{L^\infty(D)}} \\
&+  \sum_{\ell=1}^\infty{^{\textcircled{2}}\left( H_{2,\ell}(x_*) \int_{x_*}^{x_0} t^{n-1} H_{1,\ell}(t)|\mathring{G}_{\ell}(t)|\,
dt \right)\|\varphi_\ell\|_{L^\infty(D)}}.
\end{split}
\end{align}
Here we have labeled all terms in order of increasing conceptual difficulty. We will now estimate these terms one by one. Because there are $6$ contributions in total, it will be sufficient to prove that each of them (after summing over $\ell$ in cases $\textcircled{1}$--$\textcircled{3}$) is bounded above by $C H(x_*)/20$.

$\textcircled{1}$ This is essentially the same as the estimates after \eqref{eq-Soln-Bessel} in the proof of Proposition \ref{prop:rep}. Claim 2 is not needed for this. Indeed, using \eqref{eq:auxspec1}--\eqref{eq:auxspec2} and choosing $x_0$ so small that the remark after \eqref{eq:bessel1} --\eqref{eq:bessel2} can be applied, we obtain as in \eqref{eq:quantities1} that
\begin{equation}
\sum_{\ell=1}^\infty \textcircled{a}
\leq \sum_{\ell=1}^\infty \left[{O}(\ell^{-\infty}x_0^\infty) \left(\frac{x_*}{x_0}\right)^{-\frac{n}{2}+\frac{1}{4}}e^{-2\sqrt{\lambda_{\ell}}\left(\frac{1}{\sqrt{x_*}} - \frac{1}{\sqrt{x_0}}\right)}\right]
\leq O(x_0^\infty) \frac{H(x_*)}{H(x_0)}.
\end{equation}
This will be bounded above by $CH(x_*)/20$, as desired, if we choose $x_0$ to be smaller than some small absolute constant and then choose $C \geq 1/H(x_0)$. Similarly,
\begin{equation}\label{eq-3-2}
\sum_{\ell=1}^\infty \textcircled{b}\leq O(x_0^\infty)\frac{H(x_*)}{H(x_0)} \sum_{\ell=1}^\infty \left[O(\ell^{-\infty}) H_{1,\ell}(x_0)\int_{0}^{x_0} O(t^\infty) H_{2,\ell}(t)\,dt\right] \leq O(x_0^\infty)\frac{H(x_*)}{H(x_0)}.
\end{equation}
This concludes our discussion of $\textcircled{1}$.

In $\textcircled{2}$--$\textcircled{5}$ we will use Claim 2 together with \eqref{genesis}. We begin with some general remarks on the key question of how to choose $r$ in \eqref{eq-Schauder+1}. A smaller $r$ makes \eqref{eq-Schauder+1} weaker. On the other hand, $r$ needs to be chosen sufficiently small so that the function on the right-hand side of \eqref{genesis}, i.e., $y \mapsto CH(y)$ for $y \in [x_*,x_0]$ and $y \mapsto CH(x_*)(\frac{y}{x_*})^N$ for $y \in (0,x_*]$, varies at most by a bounded factor over the interval $y \in ((1-r)t,(1+r)t)$ for all $t$ in the domain of the relevant integral in \eqref{eq-Soln-Bessel-456}. If so, then we are able to bound $\sup_{A_r(t)} |u|$ on the right-hand side of \eqref{eq-Schauder+1} by the value of this function at $y = t$, so we at least retain a chance of estimating the integral by $\sim CH(x_*)$ rather than by some other function of $x_*$ that might decay much more slowly than $H$. For a function of the form $y \mapsto \exp(-\delta y^{-\alpha})$ ($\alpha,\delta>0$) we thus require that $r \lesssim \frac{1}{\delta\alpha} t^\alpha$. We will apply this with $\alpha = \frac{1}{2}$, $\delta$ fixed, and with $\alpha \to 0$, $\delta\alpha$ fixed. However, for us, it will sometimes be beneficial (see $\textcircled{3}$) or at least harmless (see $\textcircled{4}$--$\textcircled{5}$) to choose $r \ll \frac{1}{\delta\alpha}t^\alpha$.

$\textcircled{2}$ Here we have $\alpha = \frac{1}{2}$ because the domain of integration is $t \in [x_*,x_0]$. Thus, our only option is to set $r \sim \sqrt{t}$ in \eqref{eq-Schauder+1}. More precisely, we let $r = \sqrt{t}/2$. Then we claim that, as expected,
\begin{equation}\label{eq:victory111}
\sup\nolimits_{A_r(t)} |u| \leq C_0 C H(t)\;\,\text{for all}\;\,t \in [x_*,x_0].
\end{equation}
Indeed, if $A_r(t) \subset \{x_* \leq x \leq x_0\}$, this follows from the first line of \eqref{genesis} (with $x = x_*$) precisely by the above argument motivating the choice $r \lesssim \sqrt{t}$. For $t$ so close to $x_*$ that $A_r(t) \not\subset \{x_* \leq x \leq x_0\}$, we also need to use the second line of \eqref{genesis} and the trivial fact that $\sup_{A_r(t) \cap \{x < x_*\}} CH(x_*)(\frac{x}{x_*})^N = CH(x_*)$. Similary, for $t$ so close $x_0$ that $A_r(t) \not\subset \{x_* \leq x \leq x_0\}$, we moreover need to use our assumption that $|u|  \leq C_M x^M$ on $A_r(t) \cap \{x_0 < x \leq 1\}$ and choose $C$ large enough so that
\begin{equation}\label{eq:choiceofC}
C_M(x_0(1+\sqrt{x_0}/2))^M \leq C H(x_0).
\end{equation}
Having proved \eqref{eq:victory111}, we now combine this with \eqref{eq-Schauder+1} to obtain that
\begin{equation}
\begin{split}
\sum_{\ell=1}^\infty \textcircled{2}
&\leq \sum_{\ell=1}^\infty O(\ell^{-\infty})\, x_*^{-\frac{n}{2}+\frac{1}{4}}e^{-\frac{2\sqrt{\lambda_{\ell}}}{\sqrt{x_*}}}\int_{x_*}^{x_0} t^{n-1} t^{-\frac{n}{2} + \frac{1}{4}}e^{\frac{2\sqrt{\lambda_{\ell}}}{\sqrt{t}}} t^{\eta-1} CH(t) \, dt\\
&\leq \sum_{\ell=1}^\infty O(\ell^{-\infty})\, x_*^{-\frac{n}{2}+\frac{1}{4}}e^{-\frac{2\sqrt{\lambda_{\ell}}}{\sqrt{x_*}}}\int_{x_*}^{x_0} C t^{-\frac{3}{2}+\eta} e^{\frac{2\sqrt{\lambda_{\ell}} -2\sqrt{\lambda_1}}{\sqrt{t}}}dt.
\end{split}
\end{equation}
This holds for any given $\eta \in (0,M-1)$ if $x_0 \leq x_{0,\eta}$ for some absolute number $x_{0,\eta}>0$. 

We now choose $\eta > \frac{1}{2}$ (this is possible because $M > \frac{3}{2}$), evaluate the integral directly if $\lambda_\ell = \lambda_1$, and estimate it using Lemma \ref{lem-Hil-xk} if $\lambda_\ell > \lambda_1$. Fixing $\epsilon=1$ in Lemma \ref{lem-Hil-xk} and also assuming that
\begin{equation}
3\eta\sqrt{x_0} \leq \min\{\sqrt{\lambda_\ell}: \ell \geq 2, \lambda_\ell > \lambda_1\}-\sqrt{\lambda_1},\end{equation}
we thus obtain the following estimate:
\begin{equation}
\sum_{\ell=1}^\infty \textcircled{2}\leq C\sum_{\ell=1}^\infty \left[O(\ell^{-\infty})\,x_*^{-\frac{n}{2}+\frac{1}{4}}e^{-\frac{2\sqrt{\lambda_{\ell}}}{\sqrt{x_*}}}\begin{cases}x_0^{\eta-\frac{1}{2}}/(2\eta-1) &\text{if}\;\,\lambda_\ell=\lambda_1\\ x_*^\eta e^{\frac{2\sqrt{\lambda_\ell}-2\sqrt{\lambda_1}}{\sqrt{x_*}}} &\text{if}\;\,\lambda_\ell > \lambda_1\end{cases}\right]\leq \frac{C_0x_0^{\eta-\frac{1}{2}}\cdot C H(x_*)}{\min\{1,2\eta-1\}} .
\end{equation}
This will be bounded by $CH(x_*)/20$ if we decrease the absolute number $x_{0,\eta}>0$.

{$\textcircled{3}$} While shorter than $\textcircled{2}$, this step may be more confusing because the domain is now $t \in (0,x_*]$, so \eqref{genesis} bounds $|u|$ in terms of a polynomial rather than an exponential, but we are also still integrating this against the exponential-type function $H_{2,\ell}(t)$. Thus, it may not be obvious how $r$ should be chosen (from $r \sim \sqrt{t}$ for exponentials to $r \sim 1$ for polynomials). However, we will see that the ``easy'' choice, $r \sim\sqrt{t}$, which does not require us to assume that $D$ is flat, is actually better and sufficient.

First, while the analog of \eqref{eq:victory111} would still hold for $A_r(t) \subset \{x \leq x_*\}$ with $r \sim 1$, it would be false for $A_r(t) \not\subset \{x \leq x_*\}$ with $r \sim 1$ because the barrier function of \eqref{genesis} is exponential on $(x_*,x_0]$. It is however true in both cases with $r \sim \sqrt{t}$. Thus, setting $r = \sqrt{t}/2$,
\begin{equation}\label{eq:victory112}
\sup_{A_r(t)} |u| \leq C_0 C H(x_*)\left(\frac{t}{x_*}\right)^N\text{for all}\;\,t \in (0,x_*],
\end{equation}
where the case $A_r(t) \not\subset (0,x_*]$ follows from the fact that
\begin{equation}\label{eq:painfullytrivial}
t(1+\sqrt{t}/2) > x_* \Longrightarrow \sup\{ H(y): y \in (x_*, t(1+\sqrt{t}/2)]\}   \leq C_0  H(x_*)\leq C_0 H(x_*) \left(\frac{t}{x_*}\right)^N.
\end{equation}
Second, by combining \eqref{eq-Schauder+1} and \eqref{eq:victory112}, we obtain for all $\eta \in (0,M-1)$ that if $x_0 \leq x_{0,\eta}$, then
\begin{equation}\label{eq-huha}
\begin{split}
\sum_{\ell=1}^\infty \textcircled{3}
&\leq \sum_{\ell=1}^\infty O(\ell^{-\infty}) \, x_*^{-\frac{n}{2}+\frac{1}{4}}e^{\frac{2\sqrt{\lambda_{\ell}}}{\sqrt{x_*}}}\int_0^{x_*} t^{n-1} t^{-\frac{n}{2}+\frac{1}{4}}e^{-\frac{2\sqrt{\lambda_{\ell}}}{\sqrt{t}}} t^{\eta-1}C H(x_*)\left(\frac{t}{x_*}\right)^N dt\\
&\leq \sum_{\ell=1}^\infty O(\ell^{-\infty}) \, x_*^{-\frac{n}{2}+\frac{1}{4}-N}e^{\frac{2\sqrt{\lambda_\ell}}{\sqrt{x_*}}} \int_0^{x_*} t^{\frac{n}{2}-\frac{7}{4}+\eta + N} e^{-\frac{2\sqrt{\lambda_{\ell}}}{\sqrt{t}}} dt \cdot CH(x_*).
\end{split}
\end{equation}
We can now apply Lemma \ref{lem-Hil-xk} directly and without assuming that $\eta > \frac{1}{2}$:
\begin{equation}
\frac{1}{CH(x_*)}\sum_{\ell=1}^\infty \textcircled{3} \leq \sum_{\ell=1}^\infty O(\ell^{-\infty}) \, x_*^{-\frac{n}{2}+\frac{1}{4}-N}e^{\frac{2\sqrt{\lambda_\ell}}{\sqrt{x_*}}} x_*^{\frac{n}{2}-\frac{1}{4}+\eta+N} e^{-\frac{2\sqrt{\lambda_{\ell}}}{\sqrt{x_
*}}} \leq C_0x_*^\eta.
\end{equation}

{$\textcircled{4}$} Here and in $\textcircled{5}$, the upper barrier for $|u|$ from \eqref{genesis} and the function against which we need to integrate this barrier over the interval $(0,x_*]$ in \eqref{eq-Soln-Bessel-456} are polynomials. Away from $t = x_*$ this suggests that we should apply Claim 2 with $r \sim 1$. It is actually possible to work with $r \sim t^\alpha$ for any $\alpha \in [0,\frac{1}{2})$, but choosing $\alpha > 0$ makes the algebra longer while in the proof of Claim 2 only the Schauder estimates for $\alpha = \frac{1}{2}$ are easy and the ones for $\alpha \in [0,\frac{1}{2})$ are all equally difficult, requiring us to assume that $D$ is flat. Thus, for simplicity, we will stick to the choice $r \sim 1$. However, as $t$ approaches $x_*$ we do need to choose $r$ smaller than this because the right-hand side of \eqref{genesis} is exponential rather than polynomial on $(x_*,x_0]$. As it turns out, the following straightforward choice will be good enough:
\begin{equation}\label{eq-radius-choice}
r = r(t) := \max\left\{\frac{1}{2}\left(1 - \frac{t}{x_*}\right), \frac{1}{2}x_*^{\frac{1}{2}}\right\}\,\text{for all}\;\,t \in (0,x_*].
\end{equation}

To see that this makes \eqref{eq:victory112} true for all $t \in (0,x_*]$, first observe that \eqref{genesis} trivially implies \eqref{eq:victory112} if the $\sup$ is taken over $A_r(t) \cap \{x \leq x_*\}$. Now $A_r(t) \not\subset \{x \leq x_*\}$ is possible only if $t \in (x$\begin{footnotesize}$_*$\end{footnotesize}$-\,x$\begin{footnotesize}$_*^{3/2}$\end{footnotesize}$,x_*]$ as otherwise the max in \eqref{eq-radius-choice} is realized by the first term and then $t(1 + r) \leq x_*$ by computation. But if $t \in (x$\begin{footnotesize}$_*$\end{footnotesize}$-\,x$\begin{footnotesize}$_*^{3/2}$\end{footnotesize}$,x_*]$, then $r \leq \sqrt{t}$, so we can deal with the sup over $A_r(t) \cap \{x_* < x\}$ (if nonempty) by replacing $\sqrt{t}/2$ with $\sqrt{t}$ in \eqref{eq:painfullytrivial}, which makes no difference.

As in $\textcircled{3}$, given \eqref{eq:victory112}, we can proceed using \eqref{eq-Schauder+1}: for all $\eta \in (\frac{1}{2},M-1)$, if $x_0 \leq x_{0,\eta}$, then
\begin{equation}\label{eq-Genius}
\frac{\textcircled{4}}{CH(x_*)} \leq x_*^{-n-1}\int_0^{x_*} t^n \left[\frac{t^\eta}{r(t)^2}\left(\frac{t}{x_*}\right)^N \right] dt \leq x_*^{\eta-1}\int_0^{x_*}\frac{dt}{r(t)^2}.
\end{equation}
Here we have simply estimated all powers of $t$ by powers of $x_*$. 
We split the integral in \eqref{eq-Genius} into two parts: from $0$ to $x$\begin{footnotesize}$_*$\end{footnotesize}$-\,x$\begin{footnotesize}$_*^{3/2}$\end{footnotesize} (part I) and from $x$\begin{footnotesize}$_*$\end{footnotesize}$-\,x$\begin{footnotesize}$_*^{3/2}$\end{footnotesize} to $x_*$ (part II). Then
\begin{equation}
\eqref{eq-Genius}_{\rm I}\leq 4x_*^{\eta-1}\int_0^{x_* - x_*^{3/2}} \left(1-\frac{t}{x_*}\right)^{-2} dt \leq 4x_*^{\eta-\frac{1}{2}}, \;\,
\eqref{eq-Genius}_{\rm II} \leq 4x_*^{\eta-1} x_*^{\frac{3}{2}}x_*^{-1} = 4x_*^{\eta-\frac{1}{2}}.
\end{equation}

{$\textcircled{5}$} We can proceed as in $\textcircled{4}$ except that now not all powers of $t$ are positive. Thus, even though we know that the integral converges because $|\mathring{G}_0(t)| = O(t^M)$ with $M > 1$, in order to prove an effective inequality via \eqref{eq-Schauder+1} and \eqref{eq:victory112} we need to impose conditions on these powers of $t$ so that all integrals along the way have an acceptable finite bound. This is where the condition $M > \max\{\frac{3}{2},2 - N\}$ comes in: it lets us choose an $\eta \in (\frac{1}{2},M-1)$ such that $-2+\eta+N>-1$. 
With this, and avoiding bounding powers of $t$ by powers of $x_*$ if the exponents are negative, we obtain for $x_0 \leq x_{0,\eta}$ that
\begin{equation}\label{eq-Genius2}
\begin{split}
\frac{\textcircled{5}}{CH(x_*)} \leq x_*^{1-N} \int_0^{x_*} t^{-2+\eta+N} \frac{dt}{r(t)^2}.
\end{split}
\end{equation}
If $-2+\eta+N\geq 0$, we can finish the proof exactly as in $\textcircled{4}$. Otherwise we split this integral into three parts: from $0$ to $x_*/2$ (part I), where $r(t) \sim 1$, so we can drop $r(t)$ and integrate out the $t^{-2+\eta+N}$ term; and from $x_*/2$ to $x$\begin{footnotesize}$_*$\end{footnotesize}$-\,x$\begin{footnotesize}$_*^{3/2}$\end{footnotesize} (part II) and from $x$\begin{footnotesize}$_*$\end{footnotesize}$-\,x$\begin{footnotesize}$_*^{3/2}$\end{footnotesize} to $x_*$ (part III), where $t^k\sim x_*^k$ for all $k \in \R$, so that we only need to integrate or estimate the $r(t)$ term as in {$\textcircled{4}$}. Parts II and III are easily seen to be bounded on the same order as in $\textcircled{4}$, and part I is even smaller:
\begin{equation}
\begin{split}
\eqref{eq-Genius2}_{\rm I} \leq 16 x_*^{1-N}\int_0^{\frac{x_*}{2}}  t^{-2+\eta+N} \,dt \leq  \frac{16(\frac{1}{2})^{\eta+N-1}}{\eta+N-1} x_*^\eta.
\end{split}
\end{equation}
This completes the proof of the proposition, and hence of the Main Theorem.
\end{proof}

To conclude this paper, we comment on a different geometric problem where the same method can be used to prove sharp estimates of solutions.  This concerns the complete Calabi-Yau metrics of \cite[Thm 4.2]{TY2}, which also play a role in the recent papers \cite{BiqGue,SZ} mentioned in the Introduction.

Interestingly, in the Calabi-Yau setting, scaling the Hermitian metric on the underlying line bundle changes the Calabi model potential by terms that are not $O(1)$, and there are no polynomially decaying solutions or supersolutions to the relevant PDE. This means in particular that barrier arguments and expansion methods as in \cite{JiangShi,LM,RZ,Schu,Wu} cannot be used. However, it was proved in \cite[Cor 1.2]{KK} via separation of variables that bounded solutions converge to constants at infinity faster than any power of the metric distance $\rho$. Independently, using an energy decay argument, it was proved in \cite[p.366]{H} that this convergence rate is $O(\exp(-\delta \rho^{\frac{n}{n+1}}))$ for some $\delta>0$ in dimension $n$.

In the present paper, all of the above methods (barriers, energy decay, separation of variables) were combined together and fed into a new continuity argument to obtain a very precise description of the solutions in the cuspidal K\"ahler-Einstein setting. It is then also possible to go back to the Calabi-Yau setting and use a similar approach (energy decay, separation of variables, continuity) to get a sharper estimate of the convergence rate there. In fact, one can now prove that this rate is
\begin{align}
O(\rho^{-\frac{n-2}{2(n+1)}}\exp(- \delta(n)\sqrt{\lambda_1}\rho^\frac{n}{n+1})),
\end{align}
with $\lambda_1$ the first eigenvalue of the Calabi-Yau metric on the divisor, as conjectured in \cite[p.366]{H}.

\end{document}